\newtheorem{theorem}{Theorem}[section]
\newtheorem{proposition}[theorem]{Proposition}
\newtheorem{lemma}[theorem]{Lemma}
\newtheorem{corollary}[theorem]{Corollary}
\newtheorem{definition}[theorem]{Definition}
\theoremstyle{remark}
\newtheorem{remark}[theorem]{Remark}
\newcommand{\R}{\mathbb{R}}
\newcommand{\HH}{\mathcal{H}} 
\newcommand{\mres}{\mathbin{\vrule height 1.6ex depth 0pt width
0.13ex\vrule height 0.13ex depth 0pt width 1.3ex}}
\newcommand{\C}{\mathbb{C}}
\newcommand{\CC}[1]{\C #1:#1}
\newcommand{\NC}[1]{\abs{#1}_{\C}} 
\newcommand{\varepsilonA}{\varepsilon_{AR}} 
\author[C. Labourie]{Camille Labourie}
\author[A. Lemenant]{Antoine Lemenant}
\address[C. Labourie]{Université Paris-Saclay, CNRS, Laboratoire de mathématiques d'Orsay, 91405, Orsay, France.}
\email{camille.labourie@universite-paris-saclay.fr}
\address[A. Lemenant]{Institut universitaire de France and Universit\'e de Lorraine -- CNRS, UMR 7502 IECL, BP 70239
54506 Vandoeuvre-lès-Nancy,  France}
\email{antoine.lemenant@univ-lorraine.fr}
\subjclass[2020]{49J45, 49Q20, 74G65, 74R10}
\keywords{Griffith functional, uniform concentration property, blow-up limits, free discontinuity problem}
\date{\today}
\title{Uniform concentration property for Griffith almost-minimizers}
\begin{document}

\maketitle

\begin{abstract}
We prove that a Hausdorff limit of Griffith almost-minimizers remains a Griffith almost-minimizer. For this purpose, we introduce a new approach to the uniform concentration property of Dal Maso, Morel and Solimini which does not rely on the coarea formula, non available for symmetric gradient. We then develop several applications, including a general procedure to obtain global minimizers via blow-up limits.
\end{abstract}

\tableofcontents

\section{Introduction}

In recent years, a lot of attention has been given to the minimizers of the so-called Griffith functional, 
$$G(u,K):=\int_{\Omega \setminus K} \C e(u) : e(u) \dd x + \mathcal{H}^{N-1}(K),$$
defined on pairs function-set $(u,K)$, where  $K\subset \Omega\subset \R^N$ is a relatively closed set  and $u:\Omega\setminus K \to \R^N$ a displacement field. Here, $e(u)=(\nabla u +\nabla u^T)/2$ stands for the symmetrized gradient of $u$ and $\C$ is an elasticity tensor.

Since the functional is related to the variational model of crack propagation in linearized elasticity, it has been the central object of many works in the literature \cite{BCL,CC2,FLK2,BIL,CC,CFI,CFI2,CCF,CCI,cdens,dml,cfm1,CGP,dmft,dalmaso,LL,LL2}. Besides, the mathematical study of minimizers, that falls into the area of ``free-discontinuity problems'', brings a lot of technical difficulties compared to the well known scalar analogue, the Mumford-Shah functional.

A powerful approach to study the Griffith functional, which is usually referred to the ``weak formulation'', is to relax the problem  in the $GSBD$ space introduced by {\sc Dal Maso} in \cite{dalmaso}, where the pair $(u,K)$ is replaced by $u \in GSBD$ and $K = J_u$. Several existence and regularity results have been obtained in the $GSBD$ context in many recent papers (see for instance \cite{CCI, CFI, CC2}).

In this paper we shall not work in the $GSBD$ class but  work directly on pairs $(u,K)$. Our results apply for instance to the class of topological almost-minimizers for which $K$ may not be represented by the jump set of a $GSBD$ function. In this respect our work is more in the spirit of the approaches introduced for the Mumford-Shah functional by {\sc David}~\cite{DavidBOOK}, {\sc Bonnet}~\cite{Bonnet} or {\sc Dal Maso}, {\sc Morel} and {\sc Solimini}~\cite{DMS}.

The main contribution of the present paper is a limiting result for sequences of Griffith almost-minimizers converging with respect to the Hausdorff convergence of sets, see Theorem \ref{thm_limit}.
The difficulty in this context is to prove the semicontinuity behavior of the surface term.
This issue was already the main subject of previous works on the $GSBD$ space and was the key point in order to get the existence of a minimizer (see for instance \cite{dalmaso, CC}) but the literature does not deal with the convergence in the Hausdorff sense.
Yet, extracting converging sequences for the Hausdorff distance is instrumental for the regularity theory, as for instance to construct blow-up limits of minimizers, or to perform any argument by contradiction and compactness. We present at the end of the paper,  several applications.

We shall prove that Griffith almost-minimizers enjoy the so-called uniform concentration property, which was first introduced by {\sc Dal Maso}, {\sc Morel} and {\sc Solimini} \cite{DMS}, \cite{MSBOOK} in their work on the Mumford-Shah functional. 
This property says that every ball contains a smaller ball (but not too much smaller) where the density of the singular set is almost larger than $1$.
The point is to guarantee the lower-semicontinuity of the surface area along a converging sequence, similarly as in Golab's theorem.

The uniform concentration property for Mumford-Shah minimizers was established by {\sc Dal Maso}, {\sc Morel}, {\sc Solimini} \cite{DMS} in dimension $2$ and {\sc Maddalena}, {\sc Solimini} \cite{solimini}, \cite{MaSo1} in higher dimensions. Their technique, known as the excision method, does not extend to the symmetric gradient though because it relies on the full-gradient bound $\int_{B_r} \abs{\nabla u}^2 \dd{x} \leq C r^{N-1}$ to control the Hölder norm of $u$ outside of a thin neighborhood of the singular set.
Another approach is due to {\sc Rigot} \cite{Rigot}, who derived the uniform concentration from the uniform rectifiability of the singular set. However, the uniform rectifiability of Mumford-Shah minimizers \cite{DS96}, \cite{DS96bis} is proven via the co-area formula  which does not adapt to the symmetric gradient. It is worth mentioning that the piecewise Korn inequality \cite{Friedrich1}, \cite{Friedrich2} proved successful for substituting the co-area formula in the Griffith setting. This technique was utilized in \cite{FLK} to show almost-everywhere regularity and uniform rectifiability of Griffith minimizers, but it is limited to the dimension $2$.

We present a novel approach to the uniform concentration property which is suitable to the Griffith functional in any dimension.
It also yields a new proof of the uniform concentration in the scalar context of Mumford-Shah minimizers, which we believe is more elementary. It does not rely on the co-area formula, neither on powerful tools such as uniform rectifiability or the piecewise Korn inequality.
Here is one of our main result (we refer to Section \ref{definitions} for the Definition of a topological Griffith almost-minimizer).

\begin{theorem}[Uniform concentration property]\label{prop_uc}
    For each constant $\varepsilon \in (0,1)$, there exist constants $\varepsilon_0 > 0$ and $C_0 \geq 1$ (depending on $N$, $\C$, $\varepsilon$) such that the following holds.
    Let $(u,K)$ be a topological Griffith almost-minimizer with any gauge $h$ in $\Omega$. For all $x_0 \in K$ and for all $r_0 > 0$ such that $B(x_0,r_0) \subset \Omega$ and $h(r_0) \leq \varepsilon_0$, there exists $x \in B(x_0,r_0/2)$ and $r \in (C_0^{-1} r_0, r_0/2)$ such that
    \begin{equation*}
        \HH^{N-1}(K \cap B(x,r)) \geq (1 - \varepsilon) \omega_{N-1} r^{N-1},
    \end{equation*}
    where $\omega_{N-1}$ is the measure of the $(N-1)$-dimensional unit disk.
\end{theorem}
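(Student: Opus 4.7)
The plan is to argue by contradiction via a surgery construction. Suppose the conclusion fails for some $\varepsilon$: then for every candidate $C_0$ and $\varepsilon_0$ one finds a topological Griffith almost-minimizer $(u,K)$ with gauge $h$, and a ball $B(x_0, r_0) \subset \Omega$ with $h(r_0) \leq \varepsilon_0$, such that
\begin{equation*}
\HH^{N-1}(K \cap B(x,r)) < (1-\varepsilon)\omega_{N-1} r^{N-1}
\end{equation*}
holds simultaneously for every $x \in B(x_0, r_0/2)$ and every $r \in (r_0/C_0, r_0/2)$. The goal is to build a competitor $(u',K')$ inside $B(x_0, r_0)$ whose Griffith energy undercuts that of $(u,K)$ by more than the almost-minimality slack $h(r_0) r_0^{N-1}$, contradicting the almost-minimality of $(u,K)$.

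The competitor is produced by a sphere-surgery at scale $r_0/C_0$. Cover $K \cap B(x_0, r_0/2)$ by a Vitali family of pairwise disjoint balls $B(x_i, r_i)$ centered on $K$ with $r_i \sim r_0/C_0$, whose five-fold dilates cover the target. For each $i$, select a good concentric radius $\rho_i \in (r_i, 2 r_i)$ via a Fubini-type integration in radius on the measure $\HH^{N-1} \mres K$; this plays the role coarea would play in the Mumford--Shah analysis but uses only the surface measure of $K$, not the displacement $u$. Define
\begin{equation*}
K' := \bigl(K \setminus \textstyle\bigcup_i B(x_i,\rho_i)\bigr) \cup \textstyle\bigcup_i \partial B(x_i,\rho_i),
\end{equation*}
and define $u'$ to agree with $u$ outside the surgery balls and equal inside each $B(x_i, \rho_i)$ to a rigid motion matched to the trace of $u$ on the annular shell $B(x_i, \rho_i) \setminus B(x_i, r_i)$, furnished by a Korn--Poincaré inequality in $GSBD$. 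The topological almost-minimizer framework legitimates this surgery since the inserted sphere disconnects the interior from the exterior.

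The surface bookkeeping requires a careful combinatorial argument, since a naive single-scale comparison is insufficient: the inserted sphere cost $\sum_i N\omega_N \rho_i^{N-1}$ carries the unfavourable dimensional constant $N\omega_N / \omega_{N-1} > 1$, which cannot be beaten by the factor $\varepsilon$ at a single scale. The idea is to iterate the low-density hypothesis across a dyadic chain of scales in $(r_0/C_0, r_0/2)$, refining the surgery covering at each level so that the compounded surface gain reaches order $\varepsilon r_0^{N-1}$. The added elastic energy from the rigid-motion replacement, controlled by Korn--Poincaré on the shells, is bounded by a universal constant times $r_0^{N-1}$, so taking $\varepsilon_0 \ll \varepsilon$ ensures that the gauge term cannot close the gap, yielding the contradiction.

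The main obstacle is precisely the coarea-free implementation of the extension of $u$: for the scalar Mumford--Shah functional one uses Poincaré-type slicing across level sets of $u$, whereas for the Griffith functional the symmetric-gradient constraint forces one to rely on a quantitative Korn--Poincaré inequality in $GSBD$, uniform over the covering. The combinatorial iteration across scales, which replaces the coarea-based lower bound on $\HH^{N-1}(K)$ used in Dal Maso--Morel--Solimini and Solimini, must be arranged so that the low-density surface gain genuinely dominates the inserted sphere cost. This is the technical heart of the new approach, and is where the hypothesis $h(r_0) \leq \varepsilon_0$ is absorbed into the final inequality.
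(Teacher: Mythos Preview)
Your proposed approach has a genuine gap: the sphere-surgery construction cannot recover the sharp constant $(1-\varepsilon)\omega_{N-1}$, and the ``iteration across scales'' you invoke does not close this gap. Concretely, when you replace $K \cap B(x_i,\rho_i)$ by $\partial B(x_i,\rho_i)$, the surface you add has measure $N\omega_N\rho_i^{N-1}$, while the hypothesis only guarantees that the surface removed satisfies $\HH^{N-1}(K\cap B(x_i,\rho_i)) < (1-\varepsilon)\omega_{N-1}\rho_i^{N-1}$. Since $N\omega_N > \omega_{N-1}$ for every $N\geq 2$, the competitor has \emph{more} surface, not less, and this deficit is of fixed positive order independent of $\varepsilon$. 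Repeating the construction at many dyadic scales does not help: each additional surgery inserts another full sphere while deleting a piece of $K$ that has already been removed at a coarser scale, so the inserted costs accumulate while the savings do not compound. Arguments of this type prove Ahlfors lower bounds $\HH^{N-1}(K\cap B)\geq C^{-1}r^{N-1}$ for some universal $C$, but they are structurally incapable of pushing the constant up to $\omega_{N-1}$. (Also note that replacing $u$ by a rigid motion inside each ball \emph{decreases} the elastic energy rather than adding to it, so the Korn--Poincar\'e discussion is not the obstacle.)

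The paper's argument is entirely different and rests on the heuristic that in regime of small elastic energy $K$ behaves like a minimal set, for which density $\geq\omega_{N-1}$ follows from the monotonicity formula. First, a Carleson-type estimate on $\int_{K\cap B(x_0,r_0/2)}\int_0^{r_0/2}\omega_p(y,t)\,t^{-1}\,dt\,d\HH^{N-1}(y)$ locates a sub-ball $B(y,t)\subset B(x_0,r_0)$ with $t\gtrsim r_0$ where the normalized $p$-energy $\omega_p(y,t)$ is below any prescribed threshold. Second, a separate proposition shows that small $\omega_p+h$ alone forces density $\geq(1-\varepsilon)\omega_{N-1}$; this is proved by contradiction and compactness, passing to a limit set that is shown to be rectifiable (via a Federer--Fleming projection argument, not coarea) and then identified as a minimal set through the weak limiting minimality. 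The sharp constant ultimately comes from the monotonicity of density for minimal sets, a mechanism your surgery argument never accesses.
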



We now provide a brief overview of our proof of the uniform concentration property, in order to highlight the distinctive features of our work, for a specialist reader.
The principle is to use Carleson estimates to find many balls $B(x,r)$ where the elastic energy of $u$ is very small and to show that in such a ball,
\begin{equation}\label{eq_Kdensity}
    \HH^{N-1}(K \cap B(x,r)) \geq (1 - \varepsilon) \omega_{N-1} r^{N-1}.
\end{equation}
This latter point is given by Proposition \ref{prop_omega} and finds its intuition in the fact that the singular set of a Griffith minimizer behaves like a minimal sets, which are known to have density $\geq 1$, in regime of low elastic energy.

The proof of Proposition \ref{prop_omega} is by contradiction. After a suitable rescaling, we assume that there exists a sequence of almost-minimizers $(u_i,K_i)_i$ in $B(0,1)$ with vanishing elastic energy but with density uniformly bounded from above by $1 - \varepsilon$. We extract a subsequence which converges to a pair $(u,K)$ and we aim to show that the limit $K$ is a minimal set and that the area sequence is lower semi-continuous along the sequence.

For this purpose, our starting point is inspired by the works of {\sc Fang} \cite{Fang}, \cite{FangPHD} and a series of works by {\sc De Lellis et al.} \cite{I1}, \cite{I4} and {\sc De Philippis et al.} \cite{I2},  \cite{I5} on lower semi-continuity of the area for minimizing sequences of the Plateau problem. The key point of these works is to establish the rectifiability of the limit.
This is not straightforward as in general, a limit of rectifiable sets may not be rectifiable.
In the context of Mumford-Shah minimizers, the rectifiability along limits follows from the projection property introduced by {\sc Dibos}, {\sc Koepfler} \cite{dibos} and generalized in every dimension by {\sc Solimini} \cite{solimini}. As the proof is based on the excision method, it does not adapt to the symmetric gradient. Thus, one of the first difficulty of Proposition \ref{prop_omega} will be to prove that $K$ is rectifiable and this will done by use of a Federer-Fleming projection technique.

Thanks to  this approach, we are  reduced  to showing that (\ref{eq_Kdensity}) holds for a Griffith almost-minimizer in $B(x,r)$ in the situation where \emph{both} the flatness and the elastic energy are small.
In this case, the geometry of $K$ is under control via the flatness and this allows to bound the density by a constructive argument. This is done in Proposition \ref{prop_mainHole} and the proof consists in estimating the ``size of holes'' not directly for $K$, but for the orthogonal projection of $K$ onto a hyperplane.
Since the projection has less area, it is enough to bound from below the projection of $K$ in order to get \eqref{eq_Kdensity}. Then to estimate the projection, we first prove that under a small flatness and normalized energy, the normalized ``jump'' has to be greater than some threshold $\tau_0>0$. This is Lemma \ref{lem_Jinit} which is proved using the construction of a suitable competitor. Our notion of normalized jump in $B(x,r)$ is defined by
$$ J(x,r) := \frac{\abs{b_1 - b_2} + r|A_1 - A_2|}{\sqrt{r}},
$$
where $b_1+A_1x$ and $b_2+A_2x$ are two rigid movements that approximates $u$ above and below the approximative plane $P$ in $B(x,r)$. Then we can  estimate the size of holes for the projection of $K$ onto $P$, in $B(x,r)$, by integrating $u\cdot \nu$ along segments in the direction $\nu$ passing ``through'' the holes, where $\nu$ is orthogonal to $P$.  This is done in Lemma \ref{lem_slicing} and explains why we can avoid the use of the coarea formula. 

To be more precise, our argument has a degree of subtlety
because only one direction $\nu$ is admissible as passing ``through the holes'', and we cannot integrate along an almost vertical family of non colinear directions: therefore,  we choose one good  almost vertical direction that ``represents well'' the jump or in other words we slightly turn  the plane $P$ on which we project. By doing so, we lose a constant in the estimates, but since we have a universal control on the threshold $\tau_0>0$ which initializes the jump, the estimates are flexible enough to get the desired conclusion. This is done in Section \ref{section_mainHole} in the proof of Proposition \ref{prop_mainHole}. 

With the uniform concentration at hand, we can prove a general principle for limits of sequences of almost minimizers, as stated in a second main result Theorem~\ref{thm_limit}. We then use it to get several applications. The first one is a general strategy to take blow-up limits, and prove that any blow-up sequence must converge to a global minimizer, which is the purpose of Section~\ref{section_blowup}. In Proposition~\ref{prop_classification}, we prove that any global minimizer in dimension 2 whose singular set is a cone, must be a line, a half-line, or a triple junction. Finally, in Proposition~\ref{prop_dimension}, we extend the theorem of {\sc Ambrosio}, {\sc Fusco} and {\sc Hutchinson} \cite{AFH} to the Griffith setting. This result estimates the Hausdorff dimension of the singular set via the integrability exponent of the symmetric gradient.

Let us now introduce some definitions and state our main result more precisely.

\section{Definitions and statement of the main result}\label{definitions}
Our working space is an open set $\Omega \subset \R^N$, where $N \geq 2$.
We say that a constant is \emph{universal} when it depends only on $N$.
Given a set $A$, the notation $A \subset \subset \Omega$ stands for $\overline{A} \subset \Omega$.
We define a rigid motion as an affine map $a : \R^N \to \R^N$ of the form $a(x) = b + Ax$, where $b \in \R^N$ and $A \in \R^{N \times N}$ is a skew-symmetric matrix.

\medskip

\noindent{\bf Elasticity tensor.}
Given two matrices $\xi, \eta \in \R^{N \times N}$, the notation $\xi : \eta$ denotes the Frobenius inner product of $\xi$ and $\eta$,
\begin{equation*}
    \xi : \eta := \sum_{ij} \xi_{ij} \eta_{ij}.
\end{equation*}
The Frobenius norm is then given by $\abs{\xi} := \sqrt{\sum_{ij} (\xi_{ij})^2}$.
We fix for the whole paper a symmetric linear map $\C : \R^{N \times N} \to \R^{N \times N}$ such that for all $\xi \in \R^{N \times N}$,
\begin{equation*}
    \C(\xi - \xi^T) = 0 \quad \text{and} \quad \C \xi : \xi \geq c_0^{-1} \abs{\xi + \xi^T}^2,
\end{equation*}
for some constant $c_0 \geq 1$.
Note that $\C$ defines a scalar product on the space $\R^{N \times N}_{\mathrm{sym}}$ of symmetric matrices.

\medskip

\noindent{\bf (Coral) pairs.}
We define an \emph{admissible pair} as a pair $(u,K)$ such that $K$ is a relatively closed subset of $\Omega$ and $u \in W^{1,2}_{\mathrm{loc}}(\Omega \setminus K;\R^N)$.
We say that a pair has a \emph{locally finite energy} provided that for all ball $B \subset \subset \Omega$, 
\begin{equation*}
    \int_{B \setminus K} \abs{e(u)}^2 \dd{x} + \HH^{N-1}(K \cap B) < +\infty.
\end{equation*}
We say that a relatively closed set $K \subset \Omega$ is \emph{coral} if for all $x \in K$, for all $r > 0$,
\begin{equation*}
    \HH^{N-1}(K \cap B(x,r)) > 0,
\end{equation*}
where $\HH^{N-1}$ is the Hausdorff measure of dimension $N-1$.
We also say that a pair $(u,K)$ is coral when $K$ is coral.

\medskip

\noindent{\bf Competitors.}
Let $(u,K)$ be an admissible pair.
Let $B$ be an open ball such that $B \subset \subset \Omega$.
A \emph{competitor} of $(u,K)$ in $B$ is an admissible pair $(v,L)$ such that
\begin{equation}\label{eq_competitor}
    L \setminus B = K \setminus B \quad \text{and} \quad v = u \quad \text{a.e. in} \quad \Omega \setminus \left(K \cup B\right).
\end{equation}
Given a relatively closed set $K \subset \Omega$, a \emph{topological competitor}\footnote{We follow the terminology in \cite{DavidBOOK} for \emph{topological competitors}. Theses were first introduced by Bonnet \cite{Bonnet} and are also called  \emph{MS-competitors} in \cite{David2009} or \emph{separation competitors} in \cite{FLK2}.} of $K$ in $B$ is a relatively closed subset $L \subset \Omega$ such that $L \setminus B = K \setminus B$ and
\begin{equation}\label{eq_topo_competitor}
    \text{all points $x,y \in \Omega \setminus (K \cup B)$ which are separated by $K$ are also separated by $L$}.
\end{equation}
This means that if $x,y \in \Omega \setminus (K \cup B)$ belongs to different connected component of $\Omega \setminus K$, they also belong to different connected components of $\Omega \setminus L$.
We say that a pair $(v,L)$ is a topological competitor of $(u,K)$ if it is a competitor of $(u,K)$ as in (\ref{eq_competitor}) and if in addition, $L$ is a topological competitor of $K$ as in (\ref{eq_topo_competitor}).

\begin{remark}\label{rmk_borsuk}
    An example of topological competitors are sets of the form $L = f(K)$, where $f : K \to \R^N$ is a continuous map such that $f = \mathrm{id}$ in $K \setminus B$ and $f(K \cap B) \subset B$.
    \footnote{More precisely, the theory of Borsuk maps (\cite[Chap. XVII, 4.3]{dugundji}) states that if $A$ is a compact set of $\R^N$ which separates two points $p, q \in \R^N \setminus A$ and if $\phi : A \times [0,1] \to \R^N$ is a continuous map such that
    \begin{equation*}
        \phi(\cdot,0) = \mathrm{id} \quad \text{and} \quad p, q \notin \phi(A \times [0,1]),
    \end{equation*}
    then $p$ and $q$ are still separated by $\phi(A,1)$.
    Let us deduce that if two points $p,q \in \Omega \setminus \left(K \cup B\right)$ are separated by $K$, they are also separated by $f(K)$. We proceed by contradiction and assume that there exists a continuous path $\gamma$ connecting $p,q$ in $\Omega \setminus f(K)$. We let $V \subset \subset \Omega$ be an open set such that $\gamma \cup B \subset V$ and we consider the compact set
        $$A := (K \cap \overline{V}) \cup \partial V.$$
    Since $p,q$ belong to $V$ and lie in distinct connected components of $\Omega \setminus K$, they also lie in distinct connected components of $\R^N \setminus A$.
    We extend $f$ continuously on $A$ by setting $f = \mathrm{id}$ on $\partial V$. The function $f$ satisfies $f(A \cap B) \subset B$ and $f = \mathrm{id}$ in $A \setminus B$ so $p,q$ stay outside $\phi(A \times [0,1])$, where $\phi(x,t) = (1-t) x + tf(x)$.
    In particular, $p$ and $q$ lie in distinct connected components of $\R^N \setminus f(A)$ but this contradicts the fact that $\gamma$ is disjoint from $f(A) \subset f(K) \cup \partial V$.}
\end{remark}

\medskip

\noindent{\bf Quasiminimizers.}
We define a \emph{gauge} as a non-decreasing function $h : (0,+\infty) \to [0,+\infty]$ such that $h(r) <+\infty$ for sufficiently small $r$.
\begin{definition}[Quasiminimizers]\label{defi_quasi}
    Let $M \geq 1$ and let $h$ be a gauge. A \emph{Griffith local $M$-quasiminimizer} with gauge $h$ in $\Omega$ is a coral pair $(u,K)$ with locally finite energy such that for all $x \in \Omega$, for all $r > 0$ with $\overline{B}(x,r) \subset \Omega$ and for all competitor $(v,L)$ of $(u,K)$ in $B(x,r)$, we have
    \begin{multline*}
        \int_{B(x,r) \setminus K} \CC{e(u)} \dd{x} + M^{-1} \HH^{N-1}(K \cap B(x,r)) \\\leq \int_{B(x,r) \setminus L} \CC{e(v)} \dd{x} + M \HH^{N-1}(L \cap B(x,r)) + h(r) r^{N-1}.
    \end{multline*}
    Moreover,
    \begin{enumerate}[label=(\roman*)]
        \item a \emph{Griffith local minimizer} is a pair which satisfies the above definition with $M = 1$ and $h = 0$;
        \item a \emph{Griffith local almost-minimizer} is a pair which satisfies the above definition with $M = 1$ and a gauge $h$ such that $\lim_{r \to 0} h(r) = 0$;
        \item a \emph{Griffith local topological $M$-quasiminimizer} (resp. almost-minimizer or minimizer) is a pair which satisfies the above definition but only with respect to topological competitors.
    \end{enumerate}
\end{definition}

In the following, we omit the word ``local'' and ``Griffith'' for convenience.
Our terminology follows the spirit of \cite{DavidBOOK}.
Almost-minimizers look like a minimizer at small scales. They are expected to have fine regularity properties and one may hope to classify their local behaviors.
On the other hand, quasiminimizers form a much broader class which has bilipschitz invariant properties.
We stress that the gauge of an almost-minimizer satisfies $\lim_{r \to 0} h(r) = 0$ by definition whereas the gauge of a quasiminimizer is allowed not to go to zero when $r \to 0$. For example, the gauge of a quasiminimizer might be a small constant.
Our notion of quasiminimizer is larger than \cite[Definition 7.21]{DavidBOOK} so as to include the minimizers of a larger class of functionals, see
\cite[Theorem 2.7]{FLK2}. 
The notion of quasiminimizer in the book of {\sc Ambrosio}, {\sc Fusco}, {\sc Pallara} \cite{AFH} corresponds in our paper to an almost-minimizer with gauge $h(r) = h(1) r^{\alpha}$.

\begin{remark}[Standard rescaling of quasiminimizers]\label{rmk_scaling}
    If $(u,K)$ is a (resp. topological) $M$-quasiminimizer with gauge $h$ in a ball $B(x_0,r_0)$, then the pair $(v,L)$ in $B(0,1)$, defined by
    \begin{equation*}
        v(x) := r_0^{-1/2} u(x_0 + r_0 x) \quad \text{and} \quad L := r_0^{-1}(K - x_0),
    \end{equation*}
    is a (resp. topological) $M$-quasiminimizer with gauge $\tilde{h}(t) = h(r_0 t)$ in $B(0,1)$.
\end{remark}

\begin{definition}[Almost-minimal sets]\label{defi_minimalset}
    Let $h$ be a gauge such that $\lim_{r \to 0} h(r) = 0$.
    An almost-minimal set with gauge $h$ in $\Omega$ is a relatively closed and coral subset $K \subset \Omega$ such that for all $x \in K$, for all $r > 0$ such that $\overline{B}(x,r) \subset \Omega$ and for all topological competitor $L$ of $K$ in $B(x,r)$, we have
    \begin{equation*}
        \HH^{N-1}(K \cap B) \leq \HH^{N-1}(L \cap B) + h(r) r^{N-1}.
    \end{equation*}
    In the case $h = 0$, we say that it is a minimal set.
\end{definition}
This property says that a topological competitor $L$ for $K$ can decrease the area, but only up to a controlled error term.
There are also different notions of minimal sets in the literature such as Almgren minimal sets \cite{Almgren76} which are minimal under Lipschitz deformations.

\medskip

\noindent{\bf Ahlfors-regularity.} For each $M \geq 1$, there exist constants $\varepsilonA \in (0,1)$ and $C \geq 1$ (depending on $N$, $\C$, $M$) such that the following holds.
Let $(u,K)$ be a topological quasiminimizer with any gauge $h$ in $\Omega$.
Then for all $x \in \Omega \cap K$, for all $r > 0$ such that $B(x,r) \subset \Omega$ and $h(r) \leq \varepsilonA$, we have
\begin{equation}\label{eq_AF}
    \HH^{N-1}(K \cap B(x,r)) \geq C^{-1} r^{N-1}.
\end{equation}
For details, we refer to \cite{FLK2} which extends the method of \cite{CCI}, \cite{CFI} to topological quasiminimizers.
Up to choose $C$ a bit larger (still depending only on $N$, $\C$, $M$), it is standard that we also have that for all $x \in \Omega$ and $r > 0$ such that $B(x,r) \subset \Omega$,
\begin{equation}\label{eq_AF2}
    \int_{B(x,r)} \abs{e(u)}^2 \dd{x} + \HH^{N-1}(K \cap B(x,r)) \leq C(1 + h(r)) r^{N-1}.
\end{equation}
When $h(r) \leq \varepsilonA$, we directly assume that the right-hand side of (\ref{eq_AF2}) is bounded by $C r^{N-1}$.
A reasonable gauge should satisfy at least $\lim_{r \to 0} h(r) < \varepsilonA$ so that a quasiminimizer with gauge $h$ is locally Ahlfors-regular.
We will frequently refer to $\varepsilonA$ in the paper as we will need to assume that gauges are less than $\varepsilonA$ to take advantage of (\ref{eq_AF}), (\ref{eq_AF2}).

\medskip

\noindent{\bf Flatness.}
Let $(u,K)$ be a pair in $\Omega$.
For any $x_0 \in K$ and $r_0 > 0$ such that $B(x_0,r_0) \subset \Omega$, we define the flatness $\beta_K(x_0,r_0)$ of $K$ in $B(x_0,r_0)$ as
\begin{equation*}
    \beta_K(x_0,r_0) := \inf_{P} \sup_{x \in K \cap B(x_0,r_0)} \mathrm{dist}(x,P),
\end{equation*}
where $P$ runs among affine hyperplanes passing through $x_0$.
This is equivalently the infimum of all $\varepsilon > 0$ for which there exists an hyperplane $P$ through $x_0$ such that
\begin{equation*}
    K \cap B(x_0,r_0) \subset \set{y \in B(x_0,r_0) | \mathrm{dist}(y,P) \leq \varepsilon r_0}.
\end{equation*}
There always exists an hyperplane $P$ which achieves the infimum.
When there is no ambiguity, we write $\beta$ instead of $\beta_K$.
We can define similarly the bilateral flatness as
\begin{equation}\label{eq_bilateral_flatness}
    \beta^{\rm bil}_K(x_0,r_0) := \inf_{P} \max\left(\sup_{x \in K \cap B(x_0,r_0)} \mathrm{dist}(x,P),\sup_{x \in P \cap B(x_0,r_0)} \mathrm{dist}(x,K)\right).
\end{equation}
The flatness and the bilateral flatness are invariant under rescaling, see Remark \ref{rmk_scaling}

\medskip

\noindent{\bf Normalized elastic energy.}
For any $x_0 \in \Omega$ and $r_0 > 0$ such that $B(x_0,r_0) \subset \Omega$, we define the \emph{normalized elastic energy} of $u$ in $B(x_0,r_0)$ as
\begin{equation*}
    \omega(x_0,r_0) := r_0^{1 - N} \int_{B(x_0,r_0) \setminus K} \abs{e(u)}^2 \dd{x}.
\end{equation*}
More generally, for $p \geq 1$, we define
\begin{equation*}
    \omega_p(x_0,r_0):= r_0^{1-2N/p}\left(\int_{B(x_0,r_0)\setminus K} \abs{e(u)}^p \dd{x}\right)^{\frac{2}{p}}.
\end{equation*}
Here the exponent on the radius is chosen in such a way that $\omega_p$ is invariant under rescaling, see Remark \ref{rmk_scaling}.
Note that $\omega_2 = \omega$ and that for $p \in [1,2]$, we have $\omega_p \leq \omega$ by Hölder inequality.

\medskip

\noindent{\bf Local Hausdorff convergence of sets.}
We consider a sequence of open sets $(\Omega_i)_i \subset \R^N$ and an open set $\Omega$ such that
\begin{equation}\label{eq_Omega}
    \text{for all compact set $H \subset \Omega$, we have $H \subset \Omega_i$ for $i$ large enough.}
\end{equation}
\begin{definition}
    Let $(K_i)_i$ be a sequence such that for all $i$, $K_i$ is a relatively closed subset of $\Omega$.
    We say that $(K_i)_i$ \emph{converges in local Hausdorff distance} to a relatively closed subset $K \subset \Omega$ if for all compact set $H \subset \Omega$,
    \begin{equation*}
        \lim_{i \to +\infty} \left(\sup_{x \in K_i \cap H} \mathrm{dist}(x,K) + \sup_{x \in K \cap H} \mathrm{dist}(x,K_i)\right) = 0.
    \end{equation*}
\end{definition}
\noindent
This means for all $\varepsilon > 0$, there exists an index $i_0$ such that for all $i \geq i_0$,
\begin{equation*}
    K_i \cap H \subset \set{\mathrm{dist}(\cdot,K) \leq \varepsilon} \quad \text{and} \quad K \cap H \subset \set{\mathrm{dist}(\cdot,K_i) \leq \varepsilon}.
\end{equation*}
One can check that this convergence is equivalent to the two inclusions
\begin{align*}
    \set{x \in \Omega | \liminf_{i \to +\infty} \mathrm{dist}(x,K_i) = 0} \subset K \subset \set{x \in \Omega | \lim_{i \to +\infty} \mathrm{dist}(x,K_i) = 0}.
\end{align*}
Since the right-hand side is always a subset of the left-hand side, these inclusions are actually equalities and we have
\begin{equation*}
    K = \set{x \in \Omega | \lim_{i \to +\infty} \mathrm{dist}(x,K_i) = 0}.
\end{equation*}
As a consequence of the definition, we see that
\begin{equation}\label{eq_KCV}
    \text{for all compact set $H \subset \Omega \setminus K$, we have $H \subset \Omega_i \setminus K_i$ for $i$ big enough.}
\end{equation}
It follows from (\ref{eq_KCV}) that
\begin{equation}\label{eq_KCV2}
    \begin{gathered}
        \text{for all compact set $H \subset \Omega \setminus K$ and for all open set $V \subset \Omega$,}\\
        \text{if $K \cap H \subset V$, then we have $K_i \cap H \subset \Omega_i \cap V$ for $i$ big enough.}
    \end{gathered}
\end{equation}

\medskip

\noindent{\bf Convergence of pairs.}
We consider a sequence of open sets $(\Omega_i)_i \subset \R^N$ and an open set $\Omega$ such that
\begin{equation*}
    \text{for all compact set $H \subset \Omega$, we have $H \subset \Omega_i$ for $i$ large enough.}
\end{equation*}
\begin{definition}
    Let $(u_i,K_i)_i$ be a sequence such that for all $i$, $(u_i,K_i)$ is a pair in $\Omega_i$. We say that $(u_i,K_i)_i$ converges to a pair $(u,K)$ in $\Omega$ if
    \begin{enumerate}[label=(\roman*)]
        \item $(K_i)_i$ converges to $K$ in local Hausdorff distance;
        \item for all connected component $\mathcal{O}$ of $\Omega \setminus K$, there exists a sequence of rigid motions $(a_i)_i$ such that for all compact set $H \subset \mathcal{O}$,
            \begin{equation*}
                \lim_{i \to +\infty} \int_H \abs{u_i - a_i - u}^2 \dd{x} = 0.
            \end{equation*}
    \end{enumerate}
\end{definition}
This is the vectorial analogue of the convergence considered by {\sc Bonnet} \cite{Bonnet}.
Note that the limit displacement $u$ is only determined up to a rigid motion in each connected component of $\Omega \setminus K$.

\medskip

Now, here is the  main result of our paper.

\begin{theorem}\label{thm_limit}
    Let $(\Omega_i)_i$ and $\Omega$ be a sequence of open sets as in (\ref{eq_Omega}).
    Let $(u_i,K_i)_i$ be a sequence such that for all $i$, $(u_i,K_i)_i$ is a topological almost-minimizer with gauge $h_i$ in $\Omega_i$.
    We assume that $(u_i,K_i)_i$ converges to a pair $(u,K)$ in $\Omega$.
    We define for $r > 0$,
    \begin{equation*}
        h(r) =
        \begin{cases}
            \lim_{t \to r^+} \left(\limsup_{i} h_i(t)\right) &\text{if this quantity is $< \varepsilonA$}\\
            +\infty                                           &\text{otherwise.}
        \end{cases},
    \end{equation*}
    and we assume that $\lim_{t \to 0} \limsup_{i} h_i(t) = 0$.
    Then $(u,K)$ is a topological almost-minimizer with gauge $h$ in $\Omega$.
    Moreover, for all $x \in \Omega$ and $r > 0$ such that $\overline{B}(x,r) \subset \Omega$, we have
    \begin{align*}
        \liminf_{i \to +\infty} \int_{B(x,r) \setminus K_i} \CC{e(u_i)} \dd{x} &\geq \int_{B(x,r) \setminus K} \CC{e(u)} \dd{x}\\
        \limsup_{i \to +\infty} \int_{B(x,r) \setminus K_i} \CC{e(u_i)} \dd{x} &\leq \int_{B(x,r) \setminus K} \CC{e(u)} \dd{x} + h(r) r^{N-1}
    \end{align*}
    and
    \begin{align*}
        \liminf_{i \to +\infty} \HH^{N-1}(K_i \cap B(x,r)) &\geq \HH^{N-1}(K \cap B(x,r))\\
        \limsup_{i \to +\infty} \HH^{N-1}(K_i \cap \overline{B}(x,r)) &\leq \HH^{N-1}(K \cap \overline{B}(x,r)) + h(r) r^{N-1}.
    \end{align*}
    If furthermore
    \begin{equation*}
        \lim_{i \to + \infty} \int_H \abs{e(u_i)} \dd{x} = 0 \quad \text{for all compact set $H \subset \Omega \setminus K$},
    \end{equation*}
    then $u$ is a rigid motion in each connected component of $\Omega \setminus K$ and $K$ is an almost-minimal set with gauge $h$ in $\Omega$.
\end{theorem}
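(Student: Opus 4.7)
The plan is to establish separately (i) the semicontinuity inequalities for the surface and elastic terms, then (ii) the almost-minimality of $(u,K)$ by testing $(u_i,K_i)$ against competitors built from an arbitrary topological competitor $(v,L)$ of $(u,K)$, and finally (iii) the special case of vanishing elastic energy.

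For the lower semicontinuity of $\HH^{N-1}(K_i \cap B(x,r))$, I would invoke the uniform concentration property (Theorem \ref{prop_uc}) for each $(u_i,K_i)$ and follow the Dal Maso--Morel--Solimini scheme: uniform concentration is precisely the ingredient that upgrades Hausdorff convergence into the desired lower semicontinuity, via a covering argument using balls in which $K_i$ has near-maximal density, applied to an exhaustion of $K \cap B(x,r)$ by disjoint balls centred on $K$ whose total $\HH^{N-1}$-measure approaches $\HH^{N-1}(K \cap B(x,r))$. For the lower semicontinuity of the elastic energy, the Ahlfors-type bound (\ref{eq_AF2}) supplies a uniform control on $\int_{B(x,r)} |e(u_i)|^2 \dd x$ as soon as $\limsup_i h_i(r) < \varepsilonA$. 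After subtracting the rigid motions $a_i$ on each connected component of $\Omega \setminus K$ (whose symmetric gradient vanishes), a subsequence satisfies $e(u_i) \rightharpoonup e(u)$ weakly in $L^2_{\mathrm{loc}}$ on $\Omega \setminus K$, and weak lower semicontinuity of the quadratic form $\xi \mapsto \C\xi:\xi$ concludes.

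The heart of the argument, and the main obstacle, is to construct from a topological competitor $(v,L)$ of $(u,K)$ in $B(x,r)$ a sequence of topological competitors $(v_i,L_i)$ of $(u_i,K_i)$ in a slightly larger ball $B(x,r')$, with $r' > r$ chosen so that $\HH^{N-1}(K \cap \partial B(x,r')) = 0$ and $\limsup_i h_i(r') < \varepsilonA$. I plan to set
\[L_i := (L \cap \overline{B}(x,r)) \cup (K_i \setminus B(x,r)) \cup S_i,\]
where $S_i \subset \overline{B}(x,r') \setminus B(x,r)$ is a thin ``shell'' bridging $K_i$ to $L$ across $\partial B(x,r)$; Hausdorff convergence of $K_i$ to $K$ together with the generic choice of $r'$ lets one arrange $\HH^{N-1}(S_i) \to 0$. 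The topological property of $L_i$ is inherited from that of $L$ via the Borsuk-type argument of Remark \ref{rmk_borsuk}: two points of $\Omega \setminus (K_i \cup B(x,r'))$ separated by $K_i$ remain separated by the closed-up $L \cup \partial B(x,r)$, hence by $L_i$, for $i$ large. The displacement $v_i$ is defined as $v$ inside $B(x,r)$, as $u_i$ outside $B(x,r')$, and as a cut-off interpolation between them in the annulus; the transition energy is estimated by Poincar\'e--Korn on each connected component of the annulus minus $K$, using the $L^2$ proximity of $u_i$ to $v$ modulo a rigid motion supplied by the convergence of pairs. Writing the almost-minimality inequality for $(u_i,K_i)$ versus $(v_i,L_i)$ with gauge $h_i(r')$, passing to $\liminf$ on the left via the semicontinuity established above and to $\limsup$ on the right, and finally letting $r' \downarrow r$, yields the almost-minimality of $(u,K)$ with gauge $h$.

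The matching $\limsup$ inequalities in the statement follow by applying the same competitor construction to $(v,L) = (u,K)$ itself: the resulting inequality, combined with the already established $\liminf$ bounds on each individual term, forces the claimed upper bounds on $\limsup_i \int_{B(x,r)} \CC{e(u_i)} \dd x$ and $\limsup_i \HH^{N-1}(K_i \cap \overline{B}(x,r))$ separately, boundary mass issues being handled by taking $r'$ generic slightly larger than $r$ and letting $r' \downarrow r$. Finally, if $\int_H |e(u_i)| \dd x \to 0$ on every compact $H \subset \Omega \setminus K$, then $e(u) = 0$ a.e., and Korn's inequality forces $u$ to be a rigid motion on each connected component; the almost-minimality of $(u,K)$ specialised to $v = u$ then degenerates to a purely geometric comparison, showing that $K$ is an almost-minimal set with gauge $h$.
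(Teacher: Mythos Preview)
Your overall architecture matches the paper's: lower semicontinuity of the area via uniform concentration (Theorem~\ref{prop_uc}), lower semicontinuity of the elastic term by weak $L^2$ convergence, a competitor construction to pass the minimality to the limit (this is the content of Proposition~\ref{prop_weak_limit}), and then the $\limsup$ bounds by comparing $(u_i,K_i)$ to the limit pair itself. So the strategy is right.

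There is, however, a genuine gap in your verification that $L_i$ is a \emph{topological} competitor of $K_i$. You invoke Remark~\ref{rmk_borsuk}, but that remark only applies to competitors of the form $f(K)$ for a continuous $f$ equal to the identity outside the ball; your $L_i$ is not of this form. The real issue is that $L$ is a topological competitor of $K$, not of $K_i$: two points separated by $K_i$ outside $B(x,r')$ need not a priori be separated by $K$, so you cannot directly transfer the separation property of $L$ to $L_i$. The paper handles this by first proving a stability lemma (Lemma~\ref{lem_separation_convergence}): any compact set contained in a single connected component of $\Omega \setminus K$ is eventually contained in a single component of $\Omega_i \setminus K_i$. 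One then covers the annulus $\overline{B}(x,\rho+\tau r)\setminus (B(x,\rho)\cup Z)$ by finitely many compact pieces $H_\ell$, each inside one component $\mathcal{O}_\ell$ of $\Omega\setminus K$, and follows an arbitrary path in $\Omega_i\setminus L_i$ to reduce the question to connectivity of two points lying in a common $H_\ell$; the stability lemma then closes the argument. Your sentence ``separated by $K_i$ remain separated by the closed-up $L\cup\partial B(x,r)$, hence by $L_i$'' is precisely where this reasoning is missing, and the ``hence'' does not follow since $L_i$ does not contain $\partial B(x,r)$.

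Two smaller points. First, your shell $S_i$ with $\HH^{N-1}(S_i)\to 0$ is only asserted; the paper realizes it concretely by covering $K\cap\overline{B}(x,r+2\delta)$ with balls $B(y,\tau r)$, $y\in Y$, and choosing a good annulus radius $\rho$ (via Fubini over $\rho$) so that the boundary spheres $\partial B(y,\tau r)$ meeting the annulus have total area $\leq C\tau$; letting $\tau\to 0$ gives the vanishing shell. Second, in the final paragraph you cannot ``specialise to $v=u$'' when $L\ne K$, since $u$ lives on $\Omega\setminus K$, not $\Omega\setminus L$. One must instead define $v$ piecewise as the rigid motion $a_\ell$ on each component $V$ of $\Omega\setminus L$ meeting $\Omega\setminus\overline{B}(x,r)$ (this is well-defined precisely because $L$ is a \emph{topological} competitor of $K$, so all of $V\setminus\overline{B}(x,r)$ lies in a single component $\mathcal{O}_\ell$ of $\Omega\setminus K$), and $v=0$ on components contained in $\overline{B}(x,r)$.
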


This result generalizes to the Griffith setting the known limiting theorems of the scalar case. The first theorem of this kind was due to {\sc Bonnet} \cite[Theorem 2.2]{Bonnet} for blow-up limits of Mumford-Shah minimizers in $\R^2$.
It was generalized to Mumford-Shah almost-minimizers in any dimension by {\sc Maddalena}, {\sc Solimini} \cite[Theorem 11.1]{MaSo4} and {\sc David} \cite[Theorem 38.3]{DavidBOOK}. The particular case where the Dirichlet energy goes to zero was also dealt with independently by {\sc Ambrosio}, {\sc Fusco}, {\sc Hutchinson} \cite[Theorem 5.4]{AFH} and {\sc De Lellis}, {\sc Focardi} \cite[Theorem 13]{DLF1}.

The assumption $\lim_{t \to 0} \limsup_{i} h_i(t) = 0$ makes sure that the limit gauge $h$ satisfies $\lim_{r \to 0} h(r) = 0$, as requested in the definition of almost-minimizers. The minimality properties of the limit are restricted to balls $B(x,r)$ such that $\limsup_{i} h_i(r) < \varepsilonA$ because this guarantees that the sequence $(u_i,K_i)_i$ is uniformly Ahlfors-regular in $B(x,r)$, see (\ref{eq_AF}) and (\ref{eq_AF2}).

Even if all the pairs $(u_i,K_i)_i$ are plain almost-minimizer (without the topological constraint on competitors), it is unavoidable that the limit may only be minimal with respect to topological competitors.
As an example, if one takes a blow-up limit of a Mumford-Shah minimizer $(u,K)$ at a smooth point $x_0 \in K$, the limit is a pair $(u_{\infty},K_{\infty})$ such that $K_{\infty}$ is an hyperplane and $u_{\infty}$ is piecewise constant.
It is known in this case that $(u_{\infty},K_{\infty})$ is a topological minimizer but not a plain minimizer as one can find a better competitor by making a hole with suitable dimensions (see the comment just before \cite[Proposition 6.8]{afp}).

\section{Preliminaries on limits}

\subsection{Standard properties}

We start this section by observing that the convergence of pairs is preserved under rescaling. We leave the details to the reader.

\begin{remark}\label{rmk_limit_scaling}
    Let $(\Omega_i)_i$ and $\Omega$ be a sequence of open sets as in (\ref{eq_Omega}).
    Let $(u_i,K_i)_i$ be a sequence such that for all $i$, $(u_i,K_i)$ is a pair in $\Omega_i$.
    We assume that $(u_i,K_i)_i$ converges to a pair $(u,K)$ in $\Omega$. Let us fix $x_0 \in \R^N$ and $r_0 > 0$. Then the sequence of pairs $(v_i,L_i)_i$ in $r_0^{-1}(\Omega_i - x_0)$ defined by
    \begin{equation*}
        v_i(x) = r_0^{-1/2} u_i(x_0 + r_0 x) \quad \text{and} \quad L_i = r_0^{-1}(K_i - x_0)
    \end{equation*}
    converge to $(v,L)$ in $r_0^{-1}(\Omega - x_0)$, where
    \begin{equation*}
        v(x) = r_0^{-1/2} u(x_0 + r_0 x), \quad \text{and} \quad L := r_0^{-1}(K - x_0).
    \end{equation*}
\end{remark}

We recall a standard compactness principle for the local Hausdorff convergence. This is a minor adaptation of \cite[Proposition 34.6]{DavidBOOK} and we omit the proof.
\begin{lemma}\label{lem_hausdorff_compactness}
    Let $(\Omega_i)_i$ and $\Omega$ be a sequence of open sets as in (\ref{eq_Omega}).
    Let $(K_i)_i$ be a sequence such that for all $i$, $K_i$ is a relatively closed subset of $\Omega$.
    Then there exists a subsequence which converges to a relatively closed subset $K$ of $\Omega$.
\end{lemma}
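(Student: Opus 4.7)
The plan is to exhaust $\Omega$ by compacts and perform a diagonal extraction based on Blaschke's selection theorem. First, I would choose an increasing sequence $(H_m)_m$ of compact subsets of $\Omega$ with $H_m \subset \mathrm{int}(H_{m+1})$ and $\bigcup_m H_m = \Omega$, for instance $H_m := \set{x \in \overline{B}(0,m) | \mathrm{dist}(x, \R^N \setminus \Omega) \geq 1/m}$. By (\ref{eq_Omega}), for each fixed $m$ there is an index after which $H_m \subset \Omega_i$, so $K_i \cap H_m$ is eventually a closed subset of the compact metric space $H_m$. The classical Blaschke selection theorem provides, for each $m$, a subsequence of $(K_i \cap H_m)_i$ converging in Hausdorff distance on $H_m$; a standard diagonal extraction then yields a single subsequence $(K_{i_k})_k$ such that $(K_{i_k} \cap H_m)_k$ converges for every $m$ simultaneously, to some closed set $F_m \subset H_m$, with $F_{m+1} \cap H_m = F_m$ by uniqueness of the limit.

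Next I would set
\begin{equation*}
    K := \set{x \in \Omega | \lim_{k \to +\infty} \mathrm{dist}(x, K_{i_k}) = 0},
\end{equation*}
matching the characterization of the local Hausdorff limit pointed out in the excerpt just after the definition of local Hausdorff convergence. Relative closedness of $K$ in $\Omega$ is a routine diagonal argument: if $x_n \in K$ converges to $x \in \Omega$, then for $\varepsilon > 0$ I pick $n$ with $\abs{x - x_n} < \varepsilon/2$ and then $k$ large enough so that $\mathrm{dist}(x_n, K_{i_k}) < \varepsilon/2$, yielding $\mathrm{dist}(x, K_{i_k}) < \varepsilon$ and hence $x \in K$.

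Finally, I would verify the local Hausdorff convergence $K_{i_k} \to K$. Given a compact $H \subset \Omega$, I pick $m$ with $H \subset \mathrm{int}(H_m)$. The Hausdorff convergence $K_{i_k} \cap H_m \to F_m$ on the compact $H_m$ readily implies
\begin{equation*}
    \sup_{x \in K_{i_k} \cap H} \mathrm{dist}(x, F_m) + \sup_{x \in F_m \cap H} \mathrm{dist}(x, K_{i_k}) \longrightarrow 0,
\end{equation*}
and the remaining point is to identify $F_m \cap \mathrm{int}(H_m) = K \cap \mathrm{int}(H_m)$, which is the classical coincidence of the Hausdorff limit with the Kuratowski upper and lower limits on a compact metric space. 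This identification is the only mildly delicate step: it can fail at points of $\partial H_m$ where the pointwise limits of $\mathrm{dist}(\cdot, K_{i_k})$ might disagree with $F_m$, but the cushion $H \subset \mathrm{int}(H_m)$ neutralizes this boundary issue and allows one to swap $F_m$ for $K$ in the displayed supremum, concluding the proof.
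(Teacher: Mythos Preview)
The paper omits the proof entirely, citing \cite[Proposition 34.6]{d} as a minor adaptation, so there is nothing to compare against; your argument is the standard one and is essentially correct. One small slip: the compatibility claim $F_{m+1} \cap H_m = F_m$ can fail at boundary points of $H_m$ (e.g.\ $H_m=[0,1]$, $H_{m+1}=[0,2]$, $K_i=\{1+1/i\}$ gives $F_m=\emptyset$ but $F_{m+1}\cap H_m=\{1\}$); the correct statement is $F_{m+1} \cap \mathrm{int}(H_m) = F_m \cap \mathrm{int}(H_m)$, which is exactly what you use later when you invoke the cushion $H \subset \mathrm{int}(H_m)$, so the argument still goes through.
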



Then, we deduce a compactness principle for pairs.
\begin{lemma}\label{lem_compactness}
    Let $(\Omega_i)_i$ and $\Omega$ be a sequence of open sets as in (\ref{eq_Omega}).
    Let $(u_i,K_i)_i$ be a sequence such that for all $i$, $(u_i,K_i)$ is a pair in $\Omega_i$ and assume that for all $x \in \Omega$, there exists $r > 0$ such that $B(x,r) \subset \Omega$ and
    \begin{equation*}
        \limsup_{i \to +\infty} \int_{B(x,r) \setminus K_i} \abs{e(u_i)}^2 < +\infty.
    \end{equation*}
    Then there exists a subsequence of $(u_i,K_i)_i$ which converges to a pair $(u,K)$ in $\Omega$.
\end{lemma}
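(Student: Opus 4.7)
The plan is to first extract a Hausdorff-converging subsequence of $(K_i)_i$, then construct the displacement limit on each connected component of $\Omega \setminus K$ via Korn's inequality and a diagonal argument over an exhaustion.

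First, Lemma \ref{lem_hausdorff_compactness} yields a subsequence such that $K_i \to K$ in local Hausdorff distance for some relatively closed $K \subset \Omega$. By (\ref{eq_KCV}), every compact $H \subset \Omega \setminus K$ is contained in $\Omega_i \setminus K_i$ for $i$ large, so $u_i$ is eventually well defined on $H$. Fix now a connected component $\mathcal{O}$ of $\Omega \setminus K$ and an exhaustion $V_1 \subset\subset V_2 \subset\subset \cdots$ of $\mathcal{O}$ by connected bounded Lipschitz open sets with $\bigcup_k V_k = \mathcal{O}$. Since $\overline{V_k} \subset \mathcal{O}$ is compact, it can be covered by finitely many balls $B(x,r)$ satisfying the hypothesis of the lemma, so $\int_{V_k} \abs{e(u_i)}^2 \dd{x}$ is uniformly bounded in $i$. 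Korn's inequality on $V_k$ then provides rigid motions $a_{i,k}$ with $\|u_i - a_{i,k}\|_{W^{1,2}(V_k)} \leq C_k$. By Rellich--Kondrachov and a first diagonal extraction in $k$, we may assume $u_i - a_{i,k} \to u^{(k)}$ strongly in $L^2(V_k)$ as $i \to \infty$, for every $k$.

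It remains to synchronize the rigid motions, which is the main subtlety. Set $a_i := a_{i,1}$. For each $k \geq 2$, the difference $a_{i,k} - a_{i,1} = (u_i - a_{i,1}) - (u_i - a_{i,k})$ converges in $L^2(V_1;\R^N)$ to $u^{(1)} - u^{(k)}$. Since rigid motions form a finite-dimensional subspace of $L^2(V_1;\R^N)$ on which all norms are equivalent, this limit is itself a rigid motion $r_k$, and $a_{i,k} - a_{i,1} \to r_k$ uniformly on every compact subset of $\R^N$. Therefore $u_i - a_i \to u^{(k)} + r_k$ in $L^2(V_k)$, and these local limits agree on overlaps and glue into a single displacement $u$ defined on all of $\mathcal{O}$ (the $W^{1,2}_{\mathrm{loc}}$-regularity of $u$ follows because $e(u_i - a_i) = e(u_i)$ is bounded in $L^2_{\mathrm{loc}}(\mathcal{O})$). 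Because $\Omega \setminus K$ has at most countably many connected components, a further diagonal extraction produces a single subsequence for which this construction succeeds on every component simultaneously, and condition (ii) of the definition of convergence of pairs is then satisfied on every compact $H \subset \mathcal{O}$ since $H$ lies in some $V_k$.

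The main obstacle is precisely the synchronization carried out in the third paragraph: a priori, the rigid motions $a_{i,k}$ given by Korn on different scales of the exhaustion could drift apart, so that no fixed choice $a_i$ works on the whole component. What rescues the argument is the finite-dimensionality of the space of rigid motions, which turns $L^2$-convergence on any set of positive measure into coefficient-wise convergence and hence into locally uniform convergence; this is the vector-valued analogue of the scalar compactness lemmas of \cite{b} and \cite{d}, where Poincaré and constants play the role of Korn and rigid motions.
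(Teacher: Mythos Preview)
Your proof is correct and follows essentially the same strategy as the paper: extract a Hausdorff limit of the sets, then on each connected component of $\Omega\setminus K$ apply Korn--Poincar\'e to get local rigid normalizations, use a diagonal argument, and synchronize the rigid motions via the finite-dimensionality of the space of affine maps. The only cosmetic difference is that you use a nested exhaustion $V_1\subset\subset V_2\subset\subset\cdots$ by connected Lipschitz domains, whereas the paper covers $\mathcal O$ by a countable family of balls and links them by finite chains of overlapping balls; your choice makes the synchronization step slightly shorter since $V_1\subset V_k$ for all $k$, at the price of invoking Korn on general Lipschitz domains rather than on balls.
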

\begin{proof}
    By Lemma \ref{lem_hausdorff_compactness}, we can first extract a subsequence such that $(K_i)_i$ converges to a relatively closed subset $K \subset \Omega$.
    Now, we turn our attention to the functions $(u_i)_i$. 
    We fix a connected component $\mathcal{O}$ of $\Omega \setminus K$.
    We cover $\mathcal{O}$ by non-empty open balls $(B_n)_{n \geq 0}$ such that $B_n \subset \subset O$ and for all $n \geq 0$,
    \begin{equation*}
        \limsup_{i \to +\infty} \int_{B_n} \abs{e(u_i)}^2 \dd{x} < +\infty.
    \end{equation*}
    For all $n$, we have $B_n \subset \Omega_i$ for $i$ big enough and we observe using the Korn-Poincaré inequality that there exists a rigid motion $a_{n,i}$ such that
    \begin{equation*}
        \int_{B_n} \abs{u_i - a_{n,i}}^2 \dd{x} \leq C \mathrm{diam}(B)^2 \int_{B_n} \abs{e(u_i)}^2 \dd{x},
    \end{equation*}
    and
    \begin{equation*}
        \int_{B_n} \abs{\nabla u_i - \nabla a_{n,i}}^2 \dd{x} \leq C \int_{B_n} \abs{e(u_i)}^2 \dd{x}.
    \end{equation*}
    Therefore, we see that for all $n$, the sequence $(u_i - a_{n,i})_i$ is bounded in $W^{1,2}(B_n;\R^N)$ and by a diagonal extraction argument, we can extract a subsequence of $(u_i)_i$ (not relabelled) such that for all $n$, the sequence $(u_i - a_{n,i})_i$ converges in $L^2(B_n,\R^N)$ to a function in $W^{1,2}(B_n;\R^N)$.
    Now we let $a_i := a_{0,i}$ (the rigid motion in the ball $B_0$) and we are going to show that for all $n \geq 0$, the sequence $(a_n - a_{n,i})_i$ converges locally uniformly in $\R^N$ to a rigid motion.
    First we observe that for every $x \in \mathcal{O}$, there exists a finite chain of balls among $(B_n)_n$ linking $B_0$ to $x$. More precisely, there exists a finite number of indices $n(1),\ldots,n(l)$ with $n(1) = 0$ and $x \in B_{n(l)}$, such that for all $0 \leq k < l$, $B_{n(k)} \cap B_{n(k+1)} \ne \emptyset$. This is a consequence of connectedness as the set of points $x \in \mathcal{O}$ satisfying this property is non-empty and is both relatively open and closed in $\mathcal{O}$.
    Now, we fix a ball $B_n$ and by the above observation we can consider a finite number of indices $n(1),\ldots,n(l)$ with $n(1) = 0$ and $n(l) = n$, such that for all $0 \leq k < l$, $B_{n(k)} \cap B_{n(k+1)} \ne \emptyset$.
    Since $(u_i - a_{n(k),i})_i$ converges in $L^2(B_{n(k)};\R^N)$ and $(u_i - a_{n(k+1),i})_i$ converges in $L^2(B_{n(k+1)};\R^N)$, we deduce that $(a_{n(k),i} - a_{n(k+1),i})_i$ converges in $L^2(B_{n(k)} \cap B_{n(k+1)};\R^N)$.
    As this is a sequence of rigid motions and the intersection $B_{n(k)} \cap B_{n(k+1)}$ is set of positive measure contained in some ball $B(0,R)$ with $R >0$, Lemma \ref{lem_int_rigid} shows that the sequence converges in the normed space of affine maps. It follows that the sequence $(a_{n(k),i} - a_{n(k+1),i})_i$ converges locally uniformly in $\R^N$ to a rigid motion.
    Then, a telescopic argument shows that $(a_i - a_{n,i})_i$ also converges locally uniformly in $\R^N$ to a rigid motion.
    Our claim is proved.
    We deduce that for all $n \geq 0$, $(u_i - a_i)_i$ converges in $L^2(B_n;\R^N)$ to a function in $W^{1,2}(B_n;\R^N)$. Since the balls $(B_n)_n$ cover $\mathcal{O}$, we finally conclude that there exists a function $u \in W^{1,2}_{\mathrm{loc}}(\mathcal{O};\R^N)$ such that for all compact subset $H \subset \mathcal{O}$,
    \begin{equation*}
        \lim_{i \to +\infty} \int_H \abs{u_i - a_i}^2 \dd{x} = 0.
    \end{equation*}
    In this procedure, we have extracted a subsequence of $(u_i)_i$ which depends on $\mathcal{O}$ but as $\Omega \setminus K$ has countably many connected components, we can do a diagonal extraction again so that an analogue property holds for all connected components of $\Omega \setminus K$.
\end{proof}

We now turn our attention to the semi-continuity properties of converging sequence of pairs.
\begin{lemma}[Lower semicontinuity of the elastic energy]\label{lem_semicontinuity_energy}
    Let $(\Omega_i)_i$ and $\Omega$ be a sequence of open sets as in (\ref{eq_Omega}).
    Let $(u_i,K_i)_i$ be a sequence such that for all $i$, $(u_i,K_i)$ is a pair in $\Omega_i$.
    If $(u_i,K_i)_i$ converges to a pair $(u,K)$ in $\Omega$, then for all open set $V \subset \Omega$, and for all real number $p \geq 1$,
    \begin{equation*}
        \int_{V \setminus K} \left[\CC{e(u)}\right]^{p/2} \dd{x} \leq \liminf_{i \to +\infty} \int_{\Omega_i \cap V \setminus K_i} \left[\CC{e(u)}\right]^{p/2} \dd{x}.
    \end{equation*}
\end{lemma}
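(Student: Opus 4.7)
The plan is to reduce the statement to compact subsets of $V\setminus K$ contained in a single connected component of $\Omega\setminus K$, and then to derive the inequality on each such piece from the convexity of the integrand.

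Since the integrand $(\C e(u):e(u))^{p/2}$ is non-negative, monotone convergence lets me replace $V\setminus K$ by an exhausting sequence of compact subsets, so it suffices to prove that for any compact $H\subset V\setminus K$ one has $\int_H (\C e(u):e(u))^{p/2}\dd x \le \liminf_i\int_H(\C e(u_i):e(u_i))^{p/2}\dd x$; property \eqref{eq_KCV} gives $H\subset\Omega_i\cap V\setminus K_i$ for $i$ large, so the right-hand side is bounded by the corresponding liminf on $\Omega_i\cap V\setminus K_i$. Since the connected components $\{\mathcal{O}_j\}$ of $\Omega\setminus K$ are open and pairwise disjoint and $H$ is compact, only finitely many of them meet $H$; writing $H=\bigsqcup_{j=1}^m H_j$ with $H_j:=H\cap\mathcal{O}_j$, I may work on each compact $H_j\subset\mathcal{O}_j$ separately.

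On a given $H_j$, the definition of convergence of pairs provides rigid motions $(a_{i,j})_i$ with $u_i-a_{i,j}\to u$ in $L^2(H_j)$. Because every rigid motion has vanishing symmetric gradient, $e(u_i)=e(u_i-a_{i,j})$, and this $L^2$ convergence immediately upgrades to the distributional convergence $e(u_i)\to e(u)$ in $\mathcal{D}'(\mathrm{int}\,H_j)$. I may assume the liminf is finite (otherwise there is nothing to prove) and extract a subsequence realising it; the ellipticity bound $\C\xi:\xi\ge 4c_0^{-1}|\xi|^2$ on symmetric matrices then makes $(e(u_i))_i$ uniformly bounded in $L^p(H_j)$. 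When $p>1$, reflexivity of $L^p$ lets me extract a further weakly convergent subsequence whose limit must be $e(u)$ by uniqueness of distributional limits, and since the integrand $\xi\mapsto(\C\xi:\xi)^{p/2}$ is the $p$-th power of the norm induced by the inner product $\C$ on $\R^{N\times N}_{\mathrm{sym}}$, hence convex and non-negative, standard weak $L^p$ lower semicontinuity of convex integrals concludes this case.

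The main obstacle will be the endpoint $p=1$, where $L^1$ is not reflexive and $(e(u_i))_i$ need not be weakly precompact. I will handle it by passing to matrix-valued Radon measures on $H_j$: after extraction, $e(u_i)\LL^N \rightharpoonup^* \mu$ weakly-$*$, and the distributional convergence from the previous paragraph identifies the absolutely continuous part of $\mu$ with $e(u)\LL^N$. Since $\xi\mapsto(\C\xi:\xi)^{1/2}$ is convex, non-negative, and positively $1$-homogeneous, Reshetnyak's lower semicontinuity theorem applies on $H_j$ and yields the required bound, the singular part of $\mu$ contributing only a non-negative term that can be discarded. Summing over $j$ and letting $H$ exhaust $V\setminus K$ finishes the proof.
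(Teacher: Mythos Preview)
Your strategy—reduce to pieces contained in a single component, identify the weak limit of $e(u_i)$, and invoke convexity—is sound, but there is a technical gap in the identification step. You work on the compact set $H_j = H \cap \mathcal{O}_j$ and claim that the $L^2$ convergence of $u_i - a_{i,j}$ yields $e(u_i) \to e(u)$ in $\mathcal{D}'(\mathrm{int}\,H_j)$; but $H_j$ can have positive Lebesgue measure and empty interior (think of a fat Cantor set inside one $\mathcal{O}_j$), and then this distributional convergence tells you nothing. Consequently you cannot identify the weak $L^p$ limit (or, for $p=1$, the absolutely continuous part of $\mu$) on all of $H_j$. The fix is easy: exhaust $V\setminus K$ by \emph{open} sets $W \subset\subset V\setminus K$ rather than compacts. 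Then each $W_j := W\cap\mathcal{O}_j$ is open with $\overline{W_j}$ compact in $\mathcal{O}_j$, the $L^2$ convergence holds on $\overline{W_j}$, and the distributional identification works on all of $W_j$, after which your weak-compactness and convexity (or Reshetnyak) arguments go through.

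The paper follows the same reduction to a single component but takes a different route to lower semicontinuity: instead of extracting weak limits, it uses duality. Once one knows $\int \C e(u):\phi = \lim_i \int \C e(u_i):\phi$ for every test function $\phi \in C_c(V;\R^{N\times N}_{\mathrm{sym}})$, Cauchy--Schwarz in the $\C$-inner product followed by H\"older gives
\[
\int \C e(u):\phi \;\le\; \liminf_i \left\|(\C e(u_i):e(u_i))^{1/2}\right\|_{L^p(V)} \left\|(\C\phi:\phi)^{1/2}\right\|_{L^{p'}(V)},
\]
and taking the supremum over $\phi$ recovers $\|(\C e(u):e(u))^{1/2}\|_{L^p(V)}$ on the left. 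This duality argument handles all $p \ge 1$ uniformly and avoids both the non-reflexivity of $L^1$ and Reshetnyak's theorem; your approach, once patched, is also correct but pays for its directness with the case split and the heavier $p=1$ machinery.
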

\begin{proof}
    We start with the case where $V \subset \subset \Omega \setminus K$.
    Observe that the domain $\Omega \setminus K$ can be decomposed as a disjoint union of its connected component and for each component $\mathcal{O}$, we have $V \cap \mathcal{O} \subset \subset \mathcal{O}$ because $\partial \mathcal{O} \cap \Omega \setminus K = \emptyset$.
    Therefore, it suffices to deal with the case where there exists an connected component $\mathcal{O}$ of $\Omega \setminus K$ such that $V \subset \subset \mathcal{O}$.
    We let $(a_i)_i$ be a sequence of rigid motions such that for all compact set $H \subset \mathcal{O}$,
    \begin{equation}\label{eq_ai}
        \lim_{i \to +\infty} \int_H \abs{u_i - a_i} \dd{x} = 0.
    \end{equation}
    We let $\phi \in C_c(V;\R^{N \times N}_{\mathrm{sym}})$ be a smooth test function with compact support in $V$ and which takes its values in $\R^{N \times N}_{\mathrm{sym}}$.
    By integration by parts and (\ref{eq_ai}), one can see that
    \begin{equation*}
        \int \C e(u):\phi \dd{x} = \lim_{i \to +\infty} \int \C e(u_i) : \phi \dd{x}.
    \end{equation*}
    Then, Hölder inequality and the dual representation of norms imply
    \begin{equation*}
        \int_{V} \left[\CC{e(u)}\right]^{p/2} \dd{x} \leq \liminf_{i \to +\infty} \int_{V} \left[\CC{e(u_i)}\right]^{p/2} \dd{x}.
    \end{equation*}

    For a general open set $V \subset \Omega$, we consider an exhaustion of $V \setminus K$ by an increasing sequence of open sets $(V^n)_n$ such that $V^n \subset \subset V \setminus K$. For each $n$ and for $i$ big enough, we have $V^n \subset \Omega_i \setminus K_i$ so
    \begin{align*}
        \int_{V^n} \left[\CC{e(u)}\right]^{p/2} \dd{x} &\leq \liminf_{i \to +\infty} \int_{V^n} \left[\CC{e(u_i)}\right]^{p/2} \dd{x}\\
                                                       &\leq \liminf_{i \to +\infty} \int_{\Omega_i \cap V \setminus K_i} \left[\CC{e(u_i)}\right]^{p/2} \dd{x}
    \end{align*}
    and then by letting $n \to +\infty$,
    \begin{equation*}
        \int_{V \setminus K} \left[\CC{e(u)}\right]^{p/2} \dd{x} \leq  \liminf_{i \to +\infty}\int_{\Omega_i \cap V \setminus K_i} \left[\CC{e(u_i)}\right]^{p/2} \dd{x}.
    \end{equation*}
\end{proof}

For a sequence of sets converging in Hausdorff distance, we don't have the lower semi-continuity of the area in general but we have a rough control if the sequence is uniformly Ahlfors-regular. The limit is in particular, coral and Ahlfor-regular as well. We omit the proof, which is standard.
\begin{lemma}\label{lem_uniformAF}
    Let us fix an open ball $B \subset \R^N$.
    Let $(K_i)_i$ be a sequence of relatively closed subsets of $B$ which converges to a relatively closed subset $K$ of $B$. We assume that there exists a constant $C_0 \geq 1$ such that for all $i$, for all $x \in K_i$, for all $r > 0$ such that $B(x,r) \subset B$, we have
    \begin{equation*}
        C_0^{-1} r^{N-1} \leq \HH^{N-1}(K_i \cap B(x,r)) \leq C_0 r^{N-1}.
    \end{equation*}
    Then, for all open set $V \subset B$ and for all compact set $H \subset B$, we have
    \begin{align*}
        \liminf_{i \to +\infty} \HH^{N-1}(K_i \cap V) &\geq C^{-1} \HH^{N-1}(K \cap V)\\
        \limsup_{i \to +\infty} \HH^{N-1}(K_i \cap H) &\leq C \HH^{N-1}(K \cap H),
    \end{align*}
    for some constant $C \geq 1$ which depends only on $C_0$ and $N$.
\end{lemma}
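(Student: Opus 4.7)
My plan splits into three parts, linked by the intermediate fact that the limit $K$ is itself $(N-1)$-Ahlfors regular with constants depending only on $N$ and $C_0$. I begin with the lower semicontinuity. Fix an open set $V \subset B$ and reduce to proving the bound for any compact $K' \subset K \cap V$ with $\mathrm{dist}(K', B \setminus V) > 0$. For $r > 0$ small compared to this distance, pick a maximal $2r$-separated family $\{x_j\}_{j=1}^{N_r} \subset K'$. The balls $B(x_j, r)$ are then disjoint with $B(x_j, 2r) \subset V$, and by maximality $K' \subset \bigcup_j B(x_j, 2r)$, so $N_r (2r)^{N-1} \geq \omega_{N-1}^{-1}\HH^{N-1}_{4r}(K')$. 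For $i$ large (depending on $r$), Hausdorff convergence provides $y_j^i \in K_i$ with $|y_j^i - x_j| < r/4$, so the balls $B(y_j^i, r/2)$ are disjoint and contained in $V$, and the lower Ahlfors bound of $K_i$ gives
$$\HH^{N-1}(K_i \cap V) \geq \sum_{j=1}^{N_r} \HH^{N-1}(K_i \cap B(y_j^i, r/2)) \geq N_r C_0^{-1} (r/2)^{N-1}.$$
Taking $\liminf_i$ for fixed $r$, then $r \to 0$ so that $\HH^{N-1}_{4r}(K') \to \HH^{N-1}(K')$, and exhausting $K \cap V$ by compact sets delivers the inequality.

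Next I establish the Ahlfors regularity of $K$. The upper bound follows from the lower semicontinuity just proven applied to $V = B(x, r+\varepsilon)$, combined with the uniform estimate $\HH^{N-1}(K_i \cap B(x, r+\varepsilon)) \leq C_0 (r+\varepsilon)^{N-1}$ and then $\varepsilon \to 0$. For the lower bound I extract a weak$^*$ subsequential limit $\mu$ of the Radon measures $\mu_i := \HH^{N-1} \mres K_i$ (locally uniformly bounded by upper Ahlfors regularity), whose support must lie in $K$. Picking $x_i \in K_i$ with $x_i \to x \in K$, the lower Ahlfors bound on $K_i$ applied to $\overline{B}(x_i, r/2) \subset \overline{B}(x, r)$, combined with the standard weak$^*$ inequality on compact sets, yields $\mu(\overline{B}(x, r)) \geq c\, r^{N-1}$, while the upper bound passes to $\mu(B(x, r)) \leq C r^{N-1}$. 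A Frostman-density comparison then gives $\mu \leq C' \HH^{N-1}\mres K$, hence $\HH^{N-1}(K \cap \overline{B}(x, r)) \geq c'\, r^{N-1}$.

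Finally, I handle the upper semicontinuity. Fix a compact $H \subset B$ and choose $H \subset\subset H' \subset\subset B$. For small $\eta > 0$ and $i$ large depending on $\eta$, pick a maximal $\eta$-separated family $\{z_k\} \subset K_i \cap H$; maximality forces $K_i \cap H \subset \bigcup_k B(z_k, \eta)$, and the upper Ahlfors bound of $K_i$ gives $\HH^{N-1}(K_i \cap H) \leq N_k C_0\, \eta^{N-1}$. By Hausdorff convergence I choose $\tilde z_k \in K$ with $|\tilde z_k - z_k| < \eta/4$, generating an $\eta/2$-separated family contained in $K \cap H'$ for $\eta$ small. Disjointness of the balls $B(\tilde z_k, \eta/4)$ and the lower Ahlfors regularity of $K$ produce $N_k \eta^{N-1} \leq C'' \HH^{N-1}(K \cap H')$. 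Taking $\limsup_i$ for such $\eta$, and then letting $H' \downarrow H$ (permissible since $\HH^{N-1}\mres K$ is a finite Radon measure on compact subsets of $B$) yields the stated bound. The principal difficulty lies in the lower Ahlfors regularity of $K$: since $\HH^{N-1}$-mass cannot be transferred directly from $K_i$ to $K$ under Hausdorff convergence, the weak$^*$ limit of Hausdorff measures and the Frostman comparison seem unavoidable.
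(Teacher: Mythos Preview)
The paper omits the proof of this lemma entirely, remarking only that it is standard, so there is no argument to compare against. Your three–part strategy (packing argument for the lower bound, weak$^*$ limit plus density comparison to transfer Ahlfors regularity to $K$, then packing again for the upper bound) is the standard route and is correct.

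One small point worth tightening in Part 2: when you write $\HH^{N-1}(K_i\cap B(x,r+\varepsilon))\le C_0(r+\varepsilon)^{N-1}$ for $x\in K$, the hypothesis of the lemma only gives this for centres in $K_i$, and $B(x,r+\varepsilon)$ need not sit inside $B$ when you only assume $B(x,r)\subset B$. Both issues are harmless: shift the centre to a nearby $x_i\in K_i$ (or note that upper regularity extends to arbitrary centres at the cost of a dimensional factor via a covering), and first prove the bound for radii $\rho<r$ before letting $\rho\to r^-$. With this adjustment the argument goes through as written.
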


The last result of this section is a more precise variant of (\ref{eq_KCV}).
\begin{lemma}\label{lem_separation_convergence}
    We consider a sequence of open sets $(\Omega_i)_i \subset \R^N$ and an open set $\Omega$ as in (\ref{eq_Omega}).
    We let $(K_i)_i$ be a sequence such that for all $i$, $K_i$ is a relatively closed subset of $\Omega$ and we assume that $(K_i)_i$ converges to a relatively closed subset $K \subset \Omega$.
    Then if a compact set $H$ is contained in a connected component of $\Omega \setminus K$, it is contained in a connected component of $\Omega_i \setminus K_i$ for $i$ big enough.
\end{lemma}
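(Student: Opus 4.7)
The plan is to enlarge $H$ to a compact \emph{connected} superset $H' \supset H$ still contained in the same connected component $\mathcal{O}$ of $\Omega \setminus K$, and then invoke (\ref{eq_KCV}) for $H'$ in place of $H$. Once such an $H'$ is in hand, (\ref{eq_KCV}) gives $H' \subset \Omega_i \setminus K_i$ for $i$ large enough; since $\Omega_i \setminus K_i$ is open and $H'$ is connected, $H'$ must lie entirely in a single connected component of $\Omega_i \setminus K_i$, and the conclusion then transfers to $H \subset H'$.

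To construct $H'$, I would use that $\mathcal{O}$ is an open connected subset of $\R^N$, hence path-connected. If $H = \emptyset$ there is nothing to prove, so fix a basepoint $x_0 \in H$. For each $x \in H$, choose a continuous path $\gamma_x : [0,1] \to \mathcal{O}$ from $x_0$ to $x$. Its image is compact and contained in the open set $\mathcal{O}$, so there exists $\varepsilon_x > 0$ such that the open tubular neighborhood
\begin{equation*}
    U_x := \set{y \in \R^N | \mathrm{dist}(y,\gamma_x([0,1])) < \varepsilon_x}
\end{equation*}
satisfies $\overline{U_x} \subset \mathcal{O}$. Each $U_x$ is path-connected and contains $x_0$. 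The family $\set{U_x}_{x \in H}$ covers the compact set $H$, so we can extract a finite subcover $U_{x_1}, \ldots, U_{x_n}$, and I set
\begin{equation*}
    H' := \overline{U_{x_1}} \cup \cdots \cup \overline{U_{x_n}}.
\end{equation*}
Then $H'$ is compact and $H' \subset \mathcal{O}$; moreover each $\overline{U_{x_j}}$ is connected and contains the common point $x_0$, so $H'$ is connected as a union of connected sets with a common point.

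Finally, applying (\ref{eq_KCV}) to $H'$ yields $H' \subset \Omega_i \setminus K_i$ for $i$ sufficiently large, and the connectedness of $H'$ forces $H'$, hence $H$, to lie in a single connected component of $\Omega_i \setminus K_i$. There is no real obstacle here: the one substantive step is the construction of the connected thickening $H'$, which works because thickening the paths $\gamma_x$ produces connected open sets that all share the basepoint $x_0$, so their finite union and its closure are automatically connected.
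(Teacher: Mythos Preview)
Your proof is correct and follows essentially the same approach as the paper: both exploit path-connectedness of the open component $\mathcal{O}$, reduce to finitely many pieces by compactness, and invoke (\ref{eq_KCV}) to place everything inside $\Omega_i \setminus K_i$ for large $i$. The only difference is packaging: the paper covers $H$ by finitely many balls and connects their centers pairwise by paths, whereas you build a single connected compact thickening $H'$ up front and apply (\ref{eq_KCV}) once.
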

\begin{proof}
    Let $\mathcal{O}$ be a connected component of $\Omega \setminus K$ and let $H$ be a compact set such that $H \subset \mathcal{O}$. We can cover $H$ by a finite family of balls $B_1,\ldots,B_p$, where $B_h = B(y_h,r_h)$, where $y_h \in H$, $r_h > 0$, such that $\overline{B}_h \subset \mathcal{O}$.
    For all $h_1 \ne h_2$, there exists a continuous path $\gamma \subset \mathcal{O}$ from $y_{h_1}$ to $y_{h_2}$. Since $\gamma$ is a compact subset of $\Omega \setminus K$, it is also contained in $\Omega_i \setminus K_i$ for $i$ big enough.
    Thus for $i$ big enough (depending on $h_1$ and $h_2$), the points $y_{h_1}$ and $y_{h_2}$ lie in a common connected component of $\Omega_i \setminus K_i$.
    If in addition $i$ is also big enough (still depending on $h_1$ and $h_2$) such that $\overline{B}_{h_1}, \overline{B}_{h_2} \subset \Omega_i \setminus K_i$, we deduce that $B_{h_1}$ and $B_{h_2}$ lie in a common connected component of $\Omega_i \setminus K_i$.
    Since there is only a finite number of indices $h = 1,\ldots,p$, we can find $i_0$ such that for all $i \geq i_0$ and for all $h_1 \ne h_2$, the balls $B_{h_1}$ and $B_{h_2}$ lie in a common connected component of $\Omega_i \setminus K_i$.
    Here we see that this connected component cannot depend on $h_1$ and $h_2$ so for $i \geq i_0$, all the balls $B_h$ lie in the same connected component of $\Omega_i \setminus K_i$ and $H$ as well.
\end{proof}

\subsection{A partial limiting property}

Our first step to prove Theorem \ref{thm_limit} is a weaker Proposition which does not rely on the lower semi-continuity along sequences. The goal of the two subsequent sections will be to complete this result by proving the lower semi-continuity.

\begin{proposition}\label{prop_weak_limit}
    Let $(\Omega_i)_i$ and $\Omega$ be a sequence of open sets as in (\ref{eq_Omega}).
    Let $(u_i,K_i)_i$ be a sequence such that for all $i$, $(u_i,K_i)$ is a topological almost-minimizer with gauge $h_i$ in $\Omega_i$. We assume that $(u_i,K_i)_i$ converges to a pair $(u,K)$ in $\Omega$ and we set for $r > 0$,
    \begin{equation*}
        h^+(r) := \lim_{t \to r^+} \limsup_{i \to + \infty} h_i(t).
    \end{equation*}
    Then for all $x \in \Omega$, for all $r > 0$ with $\overline{B}(x,r) \subset \Omega$ and $h^+(r) < \varepsilonA$, for all topological competitor $(v,L)$ of $(u,K)$ in $B(x,r)$, we have
    \begin{multline*}
        \limsup_{i \to +\infty} \left(\int_{B(x,r) \setminus K_i} \CC{e(u_i)} \dd{x} + \HH^{N-1}(K_i \cap \overline{B}(x,r)\right) \\\leq \int_{B(x,r) \setminus L} \CC{e(v)} \dd{x} + \HH^{N-1}(L \cap \overline{B}(x,r)) + h^+(r) r^{N-1}.
    \end{multline*}
    If furthermore
    \begin{equation*}
        \lim_{i \to + \infty} \int_H \abs{e(u_i)} \dd{x} = 0 \quad \text{for all compact set $H \subset \Omega \setminus K$},
    \end{equation*}
    then $u$ is a rigid motion in each connected component of $\Omega \setminus K$. In this case, for all $x \in \Omega$, for all $r > 0$ with $\overline{B}(x,r) \subset \Omega$ and $h^+(r) < \varepsilonA$, for all topological competitor $L$ of $K$ in $B(x,r)$, we have
    \begin{equation*}
        \limsup_{i \to +\infty} \HH^{N-1}(K_i \cap \overline{B}(x,r)) \leq \HH^{N-1}(L \cap \overline{B}(x,r)) + h^+(r) r^{N-1}.
    \end{equation*}
\end{proposition}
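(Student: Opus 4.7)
The plan is to transfer the topological competitor $(v,L)$ of $(u,K)$ in $B(x,r)$ to a topological competitor $(v_i, L_i)$ of $(u_i, K_i)$ in a slightly enlarged ball $B(x, r')$, apply the almost-minimizer inequality, and pass to the limit $i \to \infty$ before letting $r' \searrow r$. For fixed $\delta > 0$, I would use a Fubini / averaging argument to select a generic $r' \in (r, r + \delta)$ satisfying $h^+(r') < \varepsilonA$ (possible by right-continuity built into the definition of $h^+$), $\HH^{N-1}(K \cap \partial B(x, r')) = 0$ (valid for all but countably many $r'$, since $K$ has locally finite $\HH^{N-1}$-measure), and such that, along a suitable subsequence of $i$, the $L^2$-traces on $\partial B(x, r')$ of $u$, $v$, and $u_i - a_i^{\mathcal{O}}$ are close on each connected component $\mathcal{O}$ of $\Omega \setminus K$, where $a_i^{\mathcal{O}}$ are the rigid motions from $u_i - a_i^{\mathcal{O}} \to u$ in $L^2_{\mathrm{loc}}(\mathcal{O})$. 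The hypothesis $h^+(r) < \varepsilonA$ yields uniform Ahlfors-regularity of $K_i$ near $\overline{B}(x, r')$ via (\ref{eq_AF})--(\ref{eq_AF2}), so Lemma \ref{lem_uniformAF} applies there.

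Extending $(v, L)$ by $(u, K)$ outside $B(x, r)$ and setting $A := B(x, r') \setminus \overline{B}(x, r)$, I define
\[
L_i := \bigl(L \cap \overline{B}(x, r)\bigr) \cup \bigl(K_i \cap \overline{A}\bigr) \cup \bigl(K_i \setminus \overline{B}(x, r')\bigr),
\]
so that $L_i \setminus B(x, r') = K_i \setminus B(x, r')$. On each component $\mathcal{O}$ of $\Omega \setminus K$ I set $v_i := \chi (v + a_i^{\mathcal{O}}) + (1 - \chi) u_i$, where $\chi \in C_c^{\infty}(B(x, r'))$ is a cutoff with $\chi = 1$ on $B(x, r)$. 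Then $e(v_i) = e(v)$ on $\mathcal{O} \cap B(x, r)$ because $a_i^{\mathcal{O}}$ is rigid, $v_i = u_i$ outside $\overline{B}(x, r')$, and in the annulus the perturbative term in $e(v_i)$ involving $\nabla \chi$ and $v + a_i^{\mathcal{O}} - u_i$ is $L^2$-small because $v = u$ on $A \setminus K$ while $u_i - a_i^{\mathcal{O}} \to u$ in $L^2(\mathcal{O} \cap \overline{A})$.

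The main obstacle is verifying that $L_i$ is genuinely a topological competitor of $K_i$ in $B(x, r')$. Given $p, q \in \Omega \setminus (K_i \cup B(x, r'))$ separated by $K_i$, Lemma \ref{lem_separation_convergence} shows they are also separated by $K$ for $i$ large, hence by $L$ through the competitor assumption. A Borsuk-map argument in the spirit of Remark \ref{rmk_borsuk} then transfers the separation to $L_i$: one exhibits a continuous homotopy $K_i \times [0,1] \to \overline{B}(x, r')$ that fixes $K_i \setminus \overline{B}(x, r')$ and deforms $K_i \cap \overline{B}(x, r)$ onto $L \cap \overline{B}(x, r)$ through $\overline{A}$, avoiding $p, q$, using that $L_i$ coincides with $K_i$ on $\overline{A}$ by construction and that $K_i$ is Hausdorff-close to $K = L$ there.

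Plugging $(v_i, L_i)$ into the almost-minimizer inequality in $B(x, r')$ and using $\overline{B}(x, r) \subset B(x, r')$ gives
\[
\int_{B(x,r) \setminus K_i} \CC{e(u_i)} \dd{x} + \HH^{N-1}(K_i \cap \overline{B}(x, r)) \leq \int_{B(x, r') \setminus L_i} \CC{e(v_i)} \dd{x} + \HH^{N-1}(L_i \cap B(x, r')) + h_i(r') (r')^{N-1}.
\]
The $B(x, r)$-contribution on the right reproduces $\int_{B(x,r) \setminus L} \CC{e(v)} \dd{x} + \HH^{N-1}(L \cap \overline{B}(x, r))$; the annulus elastic term vanishes in the iterated limit $i \to \infty$, $\delta \to 0^+$ by the cutoff construction; the annulus area $\HH^{N-1}(K_i \cap \overline{A})$ is controlled via Lemma \ref{lem_uniformAF} by $C \HH^{N-1}(K \cap \overline{A})$, which vanishes as $\delta \to 0^+$ thanks to the generic choice $\HH^{N-1}(K \cap \partial B(x, r')) = 0$; and $h_i(r') (r')^{N-1} \to h^+(r) r^{N-1}$ by the definition of $h^+$. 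For the second assertion, if $\int_H |e(u_i)| \dd{x} \to 0$ for every compact $H \subset \Omega \setminus K$, then Lemma \ref{lem_semicontinuity_energy} forces $e(u) \equiv 0$, so $u$ is a rigid motion on each component of $\Omega \setminus K$; for any topological competitor $L$ of $K$ in $B(x, r)$, I extend it to a topological competitor $(v, L)$ of $(u, K)$ by setting $v$ on each component $\mathcal{O}_L$ of $\Omega \setminus L$ equal to the rigid motion $a_{\mathcal{O}_K}$ associated to the unique component $\mathcal{O}_K$ of $\Omega \setminus K$ containing $\mathcal{O}_L \setminus \overline{B}(x, r)$ (uniqueness following from the topological competitor property of $L$). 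Since $e(v) \equiv 0$, applying the first part yields the required inequality.
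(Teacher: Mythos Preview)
Your construction of $L_i$ has a genuine gap: it need not be a topological competitor of $K_i$. Take $N=2$, $K=\{y=0\}$, $K_i=\{y=1/i\}$, and the trivial competitor $L=K$. Then your $L_i$ is the union of the segment $\{(t,0):|t|\le r\}$ with the two outer pieces of the line $\{y=1/i\}$; near $(\pm r,0)$ these pieces do not connect, and a path can slip from the upper half-plane to the lower one through the gap (cross $y=1/i$ at small $|t|$, then drift outward staying in the strip $0<y<1/i$, then cross $y=0$ at $|t|>r$). Thus points separated by $K_i$ are not separated by $L_i$. The Borsuk-map argument you invoke does not repair this: Remark~\ref{rmk_borsuk} requires $L_i=f(K_i)$ for a continuous $f$ equal to the identity outside the ball, but $L\cap\overline B(x,r)$ and $K_i\cap\overline B(x,r)$ can have unrelated topology, so no such $f$ exists in general.

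The paper's proof handles exactly this point by inserting ``walls''. One covers $K$ near the transition annulus by finitely many small balls $B(y,\tau r)$ and adds the union $Z'$ of their boundaries to $L_i$; since $K_i$ is Hausdorff-close to $K$, any path in $\Omega_i\setminus L_i$ that tries to exploit a mismatch between $K_i$ and $L$ is blocked by $Z'$, and one checks separation by a direct path-tracing argument using Lemma~\ref{lem_separation_convergence}. The cost $\HH^{N-1}(Z')\le C\tau$ is then sent to zero. A second, related gap concerns your $v_i$: defining it componentwise on $\Omega\setminus K$ forces jumps of size $a_i^{\mathcal O_1}-a_i^{\mathcal O_2}$ across $K\cap B(x,r)$, but $L_i\cap B(x,r)=L$ need not contain $K$, so $v_i\notin W^{1,2}_{\mathrm{loc}}(\Omega_i\setminus L_i)$. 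The paper instead defines $v_i$ on each connected component $V$ of $B(x,\rho+\tau r)\setminus(L_i\cup\overline Z)$ and uses the topological competitor property of $L$ to show that the annular part of $V$ lies in a \emph{single} component $\mathcal O_\ell$ of $\Omega\setminus K$, which makes the choice of rigid motion unambiguous on $V$. Your argument for the vanishing-energy case is essentially correct and matches the paper.
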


The gauge $h^+$ is well-defined because the function $t \mapsto \limsup_{i \to +\infty} h_i(t)$ is non-decreasing on $(0,+\infty)$. One can also see that $h^+$ is right-continuous.
The reason why we work in balls where $h^+(r) < \varepsilonA$ is to ensure that the Ahlfors-regularity properties (\ref{eq_AF}), (\ref{eq_AF2}) hold along the sequence.

\begin{proof}
    We start by focusing on the first part of the statement: the limiting minimality property.
    The general strategy is clear: for a fixed ball $\overline{B}(x,r) \subset \Omega$ and for every topological competitor $(v,L)$ for $(u,K)$ in $B(x,r)$, we need to define a suitable topological competitor $(v_i,L_i)$ for $(u_i,K_i)$ in a slightly larger ball $B(x,r + \delta)$ in order to exploit the minimality of $(u_i,K_i)$ and then pass to the limit.
    For that purpose we will choose a good radius $\rho \in (r,r + \delta)$ satisfying a series of good properties before defining $v_i$ and $L_i$.
    We now fix a ball $B = B(x,r)$ with $r > 0$ such that $\overline{B} \subset \Omega$ and $h(r) < \varepsilonA$.

    \vspace{0.5cm}\noindent
    \emph{Step 1. Construction of the annulus.}
    We let $0 < \delta \leq r$ be so small that
    \begin{equation*}
        \overline{B}(x,r + 10\delta) \subset \Omega \quad \text{and} \quad \limsup_{i \to + \infty} h_i(r + 10\delta) < \varepsilonA
    \end{equation*}
    and in particular,
    \begin{equation}\label{eq_Bdelta}
        \overline{B}(x,r + 10\delta) \subset \Omega_i \quad \text{and} \quad h_i(r + 10\delta) \leq \varepsilonA \quad \text{for $i$ big enough.}
    \end{equation}
    For convenience, we assume that it holds for all $i$.
    In particular, (\ref{eq_Bdelta}) allows to apply (\ref{eq_AF}), (\ref{eq_AF2}), that is, for all $i$, for all open ball $B(y,t) \subset B(x,r + 10\delta)$, we have a uniform bound
    \begin{equation*}
        \int_{B(y,t) \setminus K_i} \abs{e(u_i)}^2 \dd{x} + \HH^{N-1}(K_i \cap B(y,t)) \leq C t^{N-1},
    \end{equation*}
    and if $y \in K_i$,
    \begin{equation*}
        \HH^{N-1}(K_i \cap B(y,t)) \geq C^{-1} \mathrm{diam}(B)^{N-1} 
    \end{equation*}
    for some constant $C \geq 1$ which depends on $N$ and $\C$. According to Lemma \ref{lem_uniformAF}, this implies that for all open ball $B(y,t) \subset B(x,r + 5\delta)$ with $y \in K$, we have
    \begin{equation}\label{eq_KAF}
        C^{-1} t^{N-1} \HH^{N-1}(K \cap B(y,t)) \leq C t^{N-1}.
    \end{equation}

    We let $\tau \in (0,1)$ be a very small parameter which can depend on $r$ and $\delta$ and such that $\tau r \leq \delta$. In what follows, the letter $C$ denotes a generic constant $\geq 1$ which is allowed to depend on $N$, $\C$ and also $r$, $\delta$. We consider a maximal subset $Y \subset K \cap \overline{B}(x,r + 2\delta)$ of points at mutual distance greater than or equal to $\tau r$. Therefore,
    \begin{equation}\label{eq_Y}
        K \cap \overline{B}(x, r + 2\delta) \subset \bigcup_{y \in Y} B(y,\tau r) 
    \end{equation}
    and the balls $B(y,\tau r/2)$, $y \in Y$ are disjoint. We can use (\ref{eq_KAF}) to estimate the number of points of $Y$, denoted by $\abs{Y}$. More precisely,
    \begin{equation*}
        \sum_{y \in Y} \HH^{N-1}(K \cap B(y, \tau r/2)) \geq C^{-1} (\tau r)^{N-1} \abs{Y}
    \end{equation*}
    and since the balls $B(y,\tau r/2)$ are disjoint and contained in $B(x,r + 3\delta)$,
    \begin{equation*}
        \sum_{y \in Y} \HH^{N-1}(K \cap B(y, \tau r/2)) \leq \HH^{N-1}(K \cap B(x,r+3\delta)) \leq C r^{N-1}.
    \end{equation*}
    Hence, $Y$ has at most $C \tau^{1-N}$ points.

    We will choose a suitable annulus of width $\tau r$ which does not intersect too many balls $B(y,\tau r)$, $y \in Y$. More precisely, for $\rho \in (r,r+\delta)$, we let
    \begin{equation*}
        Y_{\rho} := \set{y \in Y | B(y,\tau r) \cap \overline{B}(x,\rho + \tau r) \setminus B(x,\rho) \ne \emptyset}. 
    \end{equation*}
    The condition $y \in Y_{\rho}$ is equivalent to saying that $\abs{y} \in (\rho - \tau r, \rho + 2\tau r)$, or equivalently again, $\rho \in (\abs{y} - 2\tau r, \abs{y} + 2\tau r)$. Then we use Fubini and the fact that $Y$ has at most $C \tau^{1-N}$ points to estimate
    \begin{align*}
        \int_r^{r+\delta} \left(\sum_{y \in Y_{\rho}} \HH^{N-1}(\partial B(y,\tau r))\right) \dd{\rho} &= \sum_{y \in Y} \int_r^{r+\delta} \HH^{N-1}(\partial B(y,\tau r)) \mathbf{1}_{Y_\rho}(y) \dd{\rho}\\
                                                                                                       &= \sum_{y \in Y} \int_r^{r+\delta} \HH^{N-1}(\partial B(y,\tau r)) \mathbf{1}_{(\abs{y} - 2\tau r, \abs{y} + \tau r)}(\rho) \dd{\rho}\\
                                                                                                       &\leq C \tau,
    \end{align*}
    where we recall that $C$ is allowed to depend on $r$ and $\delta$. This implies that there are many $\rho \in (r,r+\delta)$ such that
    \begin{equation*}
        \sum_{y \in Y_\rho} \HH^{N-1}(\partial B(y,\tau r)) \leq C \tau, 
    \end{equation*}
    where $C$ is a bigger constant which is still allowed to depend on $r$ and $\delta$. Let us choose such a radius $\rho \in (r,r+\delta)$ and define
    \begin{equation*}
        Z := \bigcup_{y \in Y_\rho} B(y, \tau r). 
    \end{equation*}
    Then we consider a smooth scalar cut-off function $\varphi \in C_c^{\infty}(\R^N,[0,1])$ such that $0 \leq \varphi \leq 1$,
    $$\varphi= 1 \ \ \text{in} \ \ {B}(x,\rho), \quad \quad \varphi = 0 \ \ \text{in} \ \ \R^N \setminus B(x,\rho + \tau r/2),$$ and
    $$|\nabla \varphi| \leq C \tau^{-1} \ \ \text{everywhere.}$$
    Let us finally define 
    \begin{equation*}
        Z':= \bigcup_{y \in Y_\rho} \partial B(y,\tau r),
    \end{equation*}
    so that $\partial Z \subset Z'$ and
    \begin{equation*}
        \mathcal{H}^{N-1}(Z')\leq C \tau.
    \end{equation*}
    By construction, $Z' \subset B(x,r + 4\tau r)$.

    \vspace{0.5cm}\noindent
    \emph{Step 2. Construction of the competitor.} We now proceed to build a competitor $(v_i,L_i)$ of $(u_i,K_i)$ in $B(x,r + 4\tau r)$ which makes a transition between $(u_i,K_i)$ outside of $B(x,\rho + \tau r)$ and $(v,L)$ in $B(x,\rho)$.
    First of all, we observe that $Z$ covers the sets $K$, $K_i$, $L$ in the transition area.
    More precisely, we see from (\ref{eq_Y}) and the definition of $Y_\rho$ that
    \begin{equation}\label{eq_KZ}
        K \cap \overline{B}(x,\rho + \tau r) \setminus B(x,\rho) \subset Z,
    \end{equation}
    and thus, by convergence of $(K_i)_i$ to $K$,
    \begin{equation}\label{eq_KIZ}
        K_i \cap \overline{B}(x,\rho + \tau r) \setminus B(x,\rho) \subset Z \quad \text{for $i$ big enough}. 
    \end{equation}
    By the fact that $L$ coincides with $K$ outside of $B(x,r)$, we also have
    \begin{equation}\label{eq_LZ}
        L \cap \overline{B}(x,\rho + \tau r) \setminus B(x,\rho) \subset Z. 
    \end{equation}
    For convenience, we assume that (\ref{eq_KIZ}) holds for all $i$. We now define
    \begin{equation*}
        L_i := (K_i \setminus B(x,\rho)) \cup Z' \cup (L \cap \overline{B}(x,\rho)), 
    \end{equation*}
    which is a relatively closed subset of $\Omega_i$ and coincides with $K_i$ in $\Omega_i \setminus B(x,r + 4 \tau r)$. We then define $v_i$ in a piecewise way. We first set
    \begin{equation}\label{eq_vi0}
        v = 0 \ \ \text{in} \ \ Z \quad \text{and} \quad v = u_i \ \ \text{in} \ \ \Omega_i \setminus \left(B(x,\rho + \tau r) \cup L_i \cup \overline{Z}\right). 
    \end{equation}
    Then we build a transition between $u_i$ and $v$ (up to a suitable rigid motion) in $B(x,\rho + \tau r) \setminus \left(L_i \cup \overline{Z}\right)$. By (\ref{eq_KZ}) and (\ref{eq_KIZ}), we see that the annulus
    \begin{equation*}
        \overline{B}(x,\rho + \tau r) \setminus \left(B(x,\rho) \cup Z\right) 
    \end{equation*}
    is a compact subset of $\Omega \setminus K$ and $\Omega_i \setminus K_i$ (in particular, $u_i$ is well-defined there). By compactness, this annulus is covered by a finite number of connected components $\mathcal{O}_1,\ldots,\mathcal{O}_p$ of $\Omega \setminus K$. For each $\ell = 1,\ldots,p$, there exists a sequence of rigid motions $(a_{i,\ell})_i$ such that for all compact set $H \subset \mathcal{O}_{\ell}$,
    \begin{equation}\label{eq_ail}
        \text{the sequence $(u_i - a_{i,\ell})_i$ converges in $L^2$ norm to $u$ on $H$.}
    \end{equation}
    The compact sets we have in mind are the sets
    \begin{equation*}
        H_{\ell} := \mathcal{O}_{\ell} \cap \overline{B}(x,\rho + \tau r) \setminus \left(B(x,\rho) \cup Z\right).
    \end{equation*}
    Indeed, since $\mathcal{O}_{\ell} \cap \Omega \setminus K = \emptyset$ and $\overline{B}(x,\rho + \tau r) \setminus \left(B(x,\rho) \cup Z\right) \subset \Omega \setminus K$, the set $H_{\ell}$ is a compact subset of $\mathcal{O}_{\ell}$.

    Let us now consider a connected component $V$ of $B(x,\rho + \tau r) \setminus \left(L_i \cup \overline{Z}\right)$. If $V \subset B(x,\rho)$, there is no need to make a transition and we just set $v_i = 0$. Otherwise, $V \cap B(x,\rho + \tau r) \setminus B(x,\rho)$ is non-empty and we are going to check that there exists a (necessarily unique) $\ell = 1,\ldots,p$ such that
    \begin{equation}\label{eq_VOL}
        V \cap B(x,\rho + \tau r) \setminus B(x,\rho) \subset \mathcal{O}_{\ell}. 
    \end{equation}
    Let $x,y \in V \cap B(x,\rho + \tau r) \setminus B(x,\rho)$. By (\ref{eq_LZ}), we have
    \begin{equation*}
        L \cap B(x,\rho + \tau r) \setminus B(x,\rho) \subset Z 
    \end{equation*}
    and by definition of $L_i$,
    \begin{equation*}
        L \cap \overline{B}(x,\rho) \subset L_i
    \end{equation*}
    so $V$, as a connected component of $B(x,\rho + \tau r) \setminus \left(L_i \cup \overline{Z}\right)$, is disjoint from $L$. This shows that $x$ and $y$ lie in the same connected component of $\Omega \setminus L$ and since $L$ is a topological competitor of $K$ in $B(x,r)$, they also lie in the same connected component of $\Omega \setminus K$. This proves (\ref{eq_VOL}) and this leads us to set
    \begin{equation}\label{eq_vi}
        v_i = \varphi(v + a_{i,\ell}) + (1 - \varphi) u_i \quad \text{in $V$}. 
    \end{equation}
    This achieves the definition of $v_i$ in $B(x,\rho + \tau r) \setminus \left(L_i \cup \overline{Z}\right)$. Combining (\ref{eq_vi0}) and (\ref{eq_vi}), we see that
    \begin{equation*}
        v_i = u_i \ \ \text{in} \ \ \Omega_i \setminus \left(B(x,\rho + \tau r/2) \cup L_i \cup \overline{Z}\right) 
    \end{equation*}
    so there is no gluing problem along $\partial B(x,\rho + \tau r) \setminus \left(L_i \cup \overline{Z}\right)$. We conclude that $(v_i,L_i)$ is a competitor of $(u_i,K_i)$ in $B(x,r + 4 \tau r)$.
    We now check that $L_i$ is a topological competitor of $K_i$ in $B(x,r + 4\tau r)$, for $i$ big enough.
    First of all, we recall for each $\ell = 1,\ldots,p$, the set
    \begin{equation*}
        H_{\ell} = \mathcal{O}_{\ell} \cap \overline{B}(x,\rho + \tau r) \setminus \left(B(x,\rho) \cup Z\right)
    \end{equation*}
    is a compact of $\mathcal{O}_{\ell}$. Using Lemma \ref{lem_separation_convergence} and since there are only a finite number of indices $\ell = 1,\ldots,p$, we can find an index $i_0$ such that for all $i \geq i_0$ and for all $\ell = 1,\ldots,p$,
    \begin{equation}\label{eq_HIO}
        \text{the set $H_{\ell}$ is contained in a connected component of $\Omega_i \setminus K_i$.}
    \end{equation}
    Let us now consider $i \geq i_0$.
    We fix $y,z \in \Omega_i \setminus \left(B(x,r + 4\tau r) \setminus K_i\right)$ such that $y,z$ are connected by a continuous path $\gamma :[0,1] \to \Omega_i \setminus L_i$ and we prove that they are connected in $\Omega_i \setminus K_i$.
    We proceed by contradiction and assume that $y$ and $z$ lie in distinct connected components of $\Omega_i \setminus K_i$.
    We first observe that $y,z \notin \overline{Z}$ since $\overline{Z} \subset B(x,r + 4 \tau r)$. As the path $\gamma$ is disjoint from $L_i$, it is in particular disjoint from $Z'$ and therefore it must stay disjoint from $\overline{Z}$. If $\gamma$ never meets $B(x,\rho)$, then $\gamma$ is disjoint from $K_i$ because $L_i$ coincides with $K_i$ outside of $B(x,\rho) \cap \overline{Z}$.
    In this case, $y,z$ are connected by $\gamma$ in $\Omega_i \setminus K_i$ and we reach a contradiction.
    Next, we assume that $\gamma$ meets $B(x,\rho)$ and we let $y_1$, $z_1$ denote the first and last point of $\gamma$ on $\partial B(x,\rho)$. On the portion between $y$ and $y_1$, $\gamma$ lies in the complement of $B(x,\rho)$ and then one can deduce as before that this portion lies in the complement of $K_i$. Therefore $y$ and $y_1$ are connected in the $\Omega_i \setminus K_i$. Similarly, $z$ and $z_1$ are connected in $\Omega_i \setminus K_i$. It follows that $y_1$ and $z_1$ lie in distinct connected components of $\Omega_i \setminus K_i$. 
    The set of points of $\gamma \cap \partial B(x,\rho)$ which do not lie in the connected component of $\Omega_i \setminus K_i$ containing $z_1$ is non-empty (it contains $x_1)$ and closed. Therefore, there is a last point of $\gamma$ in this set, and we let it be denoted by $y_2$.
    Observe that $y_2 \notin K_i$ because $\gamma$ is disjoint from $\overline{Z}$ and because of (\ref{eq_KIZ}).
    Then, we let $z_2$ be the first of $\gamma \cap \partial B$ after $y_2$. Here again, $z_2 \notin K_i$ for the same reason.
    We deduce that $y_2$ and $z_2$ lie in distinct connected components of $\Omega_i \setminus K_i$.
    The portion of $\gamma$ between $y_2$ and $z_2$ does not meet $\partial B(x,\rho)$ to it must be either in $\Omega_i \setminus \overline{B}(x,\rho)$ or in $B(x,\rho)$. In the first case, $y_2$ and $z_2$ are connected in $\Omega_i \setminus K_i$ so we reach a contradiction. In the second case, $y_2$ and $z_2$ are connected in $\Omega \setminus L$ and since $L$ is a topological competitor of $K$ in $B(x,r)$, the points $y_2$ and $z_2$ must also be connected in $\Omega \setminus K$. We deduce that $y_2$ and $z_2$ lie in a common set $H_{\ell}$ for some $\ell = 1,\ldots,p$ and thus in a common connected component of $\Omega_i \setminus K_i$ by (\ref{eq_HIO}). This is again a contradiction.
    We have proved that for all $i \geq i_0$, $L_i$ is a topological competitor of $K_i$ in $B(x,r + 4 \tau r)$.

    \vspace{0.5cm}\noindent
    \emph{Step 3. Energy comparison.}
    We finally apply the almost minimality property of $(u_i,K_i)$ and compare its Griffith energy with $(v_i,L_i)$,
    \begin{multline*}
        \int_{B(x,r + 4\tau r) \setminus K_i} \CC{e(u_i)} \dd{x} + \HH^{N-1}(K_i \cap B(x,r + 4\tau r) \\\leq \int_{B(x,r + 4\tau r) \setminus L_i} \CC{e(v_i)} \dd{x} + \HH^{N-1}(L_i \cap B(x,r + 4\tau r)) + h_i(r + 4\tau r) (r + 4\tau r)^{N-1}.
    \end{multline*}
    Using the facts that $L_i \setminus B(x,\rho) \subset (K_i \setminus B(x,\rho)) \cup Z'$, that $\HH^{N-1}(Z') \leq C \tau$ and that $\abs{e(v_i)} \leq \abs{e(u_i)}$ a.e. in $\Omega_i \setminus B(x,\rho + \tau r)$, we arrive at
    \begin{multline}\label{eq_comparison_vi}
        \int_{B(x,\rho + \tau r) \setminus K_i} \CC{e(u_i)} \dd{x} + \HH^{N-1}(K_i \cap B(x,\rho)) \\\leq \int_{B(x,\rho + \tau r) \setminus L_i} \CC{e(v_i)} \dd{x} + \HH^{N-1}(L \cap B(x,\rho)) + C \tau + h_i(r + 4\tau r) (r + 4 \tau r)^{N-1}.
    \end{multline}
    We now estimate the contribution of $e(v_i)$ in $B(x,\rho + \tau r) \setminus L_i$. The points in $B(x,\rho + \tau r) \setminus L_i$ are either contained in $Z$, where $e(v_i) = 0$, or in a connected component $V$ of $B(x,\rho + \tau r) \setminus \left(L_i \cup \overline{Z}\right)$ such that
    \begin{equation}\label{eq_VOL2}
        V \cap B(x,\rho + \tau r) \setminus B(x,\rho) \subset \mathcal{O}_{\ell}
    \end{equation}
    for some $\ell = 1,\ldots,p$ (see (\ref{eq_VOL})), and where
    \begin{equation*}
        e(v_i) = \varphi e(v) + (1 - \varphi) e(u_i) + \nabla \varphi \odot (v + a_{i,\ell} - u_i). 
    \end{equation*}
    Here, given $a,b \in \R^N$, the notation $a \odot b$ denotes the matrix of coefficients
    \begin{equation*}
        (a \odot b)_{ij} = \frac{a_i b_j + a_j b_i}{2} \in \R^{N \times N}.
    \end{equation*}
    Note that one can bound $\abs{a \odot b} \leq \abs{a} \abs{b}$.
    The function $\xi \mapsto \C \xi : \xi$ is a positive definite quadratic form on the space $\R^{N \times N}_{\rm sym}$ of symmetric matrices and it is temporarily convenient to work with the underlying norm. We set
    \begin{equation*}
        \NC{\xi} := \sqrt{\C \xi: \xi} \quad \text{for} \ \xi \in \R^{N \times N}_{\rm sym}.
    \end{equation*}
    In a connected component $V$ of $B(x,\rho + \tau r) \setminus \left(L_i \cup \overline{Z}\right)$ where (\ref{eq_VOL2}) holds, we have by triangular inequality
    \begin{equation*}
        \NC{e(v_i)} \leq \varphi \NC{e(v)} + (1 - \varphi) \NC{e(u_i)} + C \abs{\nabla \varphi} \abs{u_i - a_{i,\ell} - v}. 
    \end{equation*}
    The function $\nabla \varphi$ is supported in $B(x,\rho + \tau r) \setminus B(x,\rho)$, satisfies $\abs{\nabla \varphi} \leq C \tau^{-1}$ and we see from (\ref{eq_VOL2}) that
    \begin{equation*}
        V \cap B(x,\rho + \tau r) \setminus B(x,\rho) \subset H_\ell,
    \end{equation*}
    where
    \begin{equation*}
        H_{\ell} = \mathcal{O}_{\ell} \cap \overline{B}(x,\rho + \tau r) \setminus \left(B(x,\rho) \cup Z\right)
    \end{equation*}
    is a compact subset of $\mathcal{O}_{\ell}$.
    Thus we can bound in $B(x,\rho + \tau r)$,
    \begin{equation}\label{eq_normvi}
        \NC{e(v_i)} \leq \varphi \NC{e(v)} + (1 - \varphi) \NC{e(u_i)} + C \tau^{-1} \sum_{\ell = 1,\ldots,p} \abs{u_i - a_{i,\ell} - v} \mathbf{1}_{H_{\ell}}.
    \end{equation}
    We will be able to get rid of the last term when $i \to + \infty$ because we know from (\ref{eq_ail}) that the sequence $(u_i - a_{i,\ell})_i$ converges in $L^2$ norm to $v$ on $H_{\ell}$.
    Let us estimate the $L^2$ norm of $e(v)$ in $B(x,\rho + \tau r)$. We use (\ref{eq_normvi}), the elementary inequality
    \begin{equation*}
        (a + b_1 + \ldots + b_p)^2 \leq (1 + \varepsilon) a^2 + p (1 + \varepsilon^{-1}) \left(b_1^2 + \ldots + b_p^2\right) \quad \text{for all $\varepsilon > 0$} 
    \end{equation*}
    and the convexity of $t \mapsto t^2$ to bound
    \begin{align*}
        \begin{split}
            \int_{B(x,\rho + \tau r)} \NC{e(v_i)}^2 \dd{x} &\leq (1 + \varepsilon) \int_{B(x,\rho + \tau r)} \left(\varphi \NC{e(v)} + (1 - \varphi) \NC{e(u_i)}\right)^2 \dd{x}\\
                                                             &\qquad + C(p) \tau^{-2} (1 + \varepsilon^{-1}) \sum_{\ell=1,\ldots,p} \int_{H_{\ell}} \abs{u_i - a_{i,\ell} - v}^2 \dd{x}
        \end{split}\\
        \begin{split}
        &\leq (1 + \varepsilon) \int_{B(x,\rho + \tau r)} \varphi \NC{e(v)}^2 + (1 - \varphi) \NC{e(u_i)}^2 \dd{x}\\
        &\qquad + C(p) \tau^{-2} (1 + \varepsilon^{-1}) \sum_{\ell = 1,\ldots,p} \int_{H_{\ell}} \abs{u_i - a_{i,\ell} - v}^2 \dd{x}.
        \end{split}
    \end{align*}
    Plugging this in (\ref{eq_comparison_vi}), we arrive at
    \begin{multline}\label{eq_comparison_vi2}
        \int_{B(x,\rho + \tau r) \setminus K_i} \varphi \NC{e(u_i)}^2 \dd{x} + \HH^{N-1}(K_i \cap B(x,\rho)) \\\leq (1 + \varepsilon) \int_{B(x,\rho + \tau r) \setminus L} \NC{e(v)}^2 \dd{x} + \varepsilon \int_{B(x,\rho + \tau r)} \NC{e(u_i)}^2 \dd{x}\\
        + C(p) \tau^{-2} (1 + \varepsilon^{-1}) \sum_{\ell=1,\ldots,p} \int_{H_{\ell}} \abs{u_i - a_{i,\ell} - v}^2 \dd{x}\\
        + \HH^{N-1}(L \cap B(x,\rho)) + C \tau + h_i(r + 4\tau r) (r + 4\tau r)^{N-1}.
    \end{multline}
    We recall that by (\ref{eq_Bdelta}), we can bound $\int_{B(x,\rho + \tau r)} \NC{e(u_i)}^2 \dd{x} \leq C$.
    We use this bound, we come back to the notation $\C \xi:\xi$ and we pass to the limit $i \to + \infty$ in (\ref{eq_comparison_vi2}) to obtain
    \begin{multline*}
        \limsup_{i \to +\infty} \left(\int_{B(x,r) \setminus K_i} \CC{e(u_i)} \dd{x} + \HH^{N-1}(K_i \cap \overline{B}(x,r))\right) \\\leq (1 + \varepsilon) \int_{B(x,\rho + \tau r) \setminus L} \CC{e(v)} \dd{x} + C \varepsilon\\ + \HH^{N-1}(L \cap B(x,\rho)) + C \tau + \limsup_{i \to + \infty} h_i(r + 4\tau r) (r + 4\tau r)^{N-1}.
    \end{multline*}
    Then we let $\varepsilon \to 0$ and then $\tau \to 0$ to conclude
    \begin{multline*}
        \limsup_{i \to +\infty} \left(\int_{B(x,r) \setminus K_i} \CC{e(u_i)} \dd{x} + \HH^{N-1}(K_i \cap \overline{B}(x,r))\right) \\\leq \int_{B(x,r) \setminus L} \CC{e(v)} \dd{x} + \HH^{N-1}(L \cap \overline{B}(x,r)) + h^+(r) r^{N-1}.
    \end{multline*}

    \vspace{0.5cm}\noindent
    \emph{Step 4. The case of a vanishing elastic energy.}
    We pass to the last part of the statement. We assume that
    \begin{equation*}
        \lim_{i \to +\infty} \int_H \abs{e(u_i)} \dd{x} = 0 \quad \text{for all compact set $H \subset \Omega \setminus K$},
    \end{equation*}
    and we prove that the limit satisfies a simplified minimality condition.
    By Lemma \ref{lem_semicontinuity_energy}, we know that $e(u) = 0$ almost-everywhere on $\Omega \setminus K$.
    Therefore, for each connected component $\mathcal{O}_{\ell} $ of $\Omega \setminus K$, there exists a rigid motion $a_{\ell}$ such that $u = a_\ell$ a.e. in $\mathcal{O}_\ell$. Now, let $L$ be a topological competitor of $K$ in some ball $B(x,r) \subset \subset \Omega$.
    We are going to define a suitable function $v \in W^{1,2}_{\mathrm{loc}}(\Omega \setminus L;\R^N)$ such that $v = u$ a.e. in $\Omega \setminus B(x,r)$.
    For each connected component $V$ of $\Omega \setminus L$, we distinguish two cases. If $V \setminus \overline{B}(x,r) \ne \emptyset$,  then there exists a unique connected component $\mathcal{O}_{\ell}$ of $\Omega \setminus K$ such that $V \setminus \overline{B}(x,r) \subset \mathcal{O}_{\ell}$. Indeed, the points of $V \setminus \overline{B}(x,r)$ belong to the same connected components of $\Omega \setminus L$, so they also belong to the same connected component of $\Omega \setminus K$. As $V \setminus \overline{B}(x,r)$ is non-empty, this connected component must be unique. In this case, we set $v = a_\ell$ in $V$ and we note we have $v = u$ a.e. in $V \setminus \overline{B}(x,r) \subset \mathcal{O}_{\ell}$.
    If on the other hand, $V \subset \overline{B}(x,r)$, then we just set $v = 0$ inside $V$ and this is compatible with the Dirichlet condition on $v$. Since both $u$ and $v$ are piecewise rigid, only the surface terms are involved in the energy comparison.
\end{proof}

\section{Fine lower density bound for quasiminimizers}\label{section_mainHole}

The main goal of this section is to prove the following proposition.
We work in the general setting of quasiminimizers as the statements of this section have an independent interest.

\begin{proposition}\label{prop_mainHole}
    For each $M \geq 1$ and $p \in (2(N-1)/N,2]$, there exists $\varepsilon_0 > 0$ (depending on $N$, $\C$, $M$, $p$) and for all $\varepsilon \in (0,1)$, there exists $\varepsilon_1 > 0$ (depending on $N$, $\C$, $M$, $p$, $\varepsilon$) such that the following holds.
    Let $(u,K)$ be a topological $M$-quasiminimizer with gauge $h$ in $\Omega$. For all $x_0 \in K$, for all $r_0 > 0$ with $B(x_0,r_0) \subset \Omega$ and $h(r_0) \leq \varepsilon_0$, if
    \begin{equation*}
        \beta(x_0,r_0) + \omega_p(x_0,r_0) \leq \varepsilon_1
    \end{equation*}
    then we have
    \begin{equation*}
        \HH^{N-1}(K \cap B(x_0,r_0)) \geq (1 - \varepsilon) \omega_{N-1} r_0^{N-1},
    \end{equation*}
    where $\omega_{N-1}$ is the measure of the $(N-1)$-dimensional unit disk.
\end{proposition}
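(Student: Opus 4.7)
By Remark \ref{rmk_scaling}, both the hypotheses and the conclusion are scale invariant, so I reduce to $x_0 = 0$, $r_0 = 1$. Let $P$ be a hyperplane through $0$ realising $\beta := \beta_K(0,1)$, with unit normal $\nu_0$. Since for any unit vector $\nu$ the orthogonal projection $\pi_\nu$ onto $P_\nu := \nu^\perp$ is $1$-Lipschitz,
\begin{equation*}
\HH^{N-1}(K \cap B(0,1)) \geq \HH^{N-1}\bigl(\pi_\nu(K \cap B(0,1))\bigr).
\end{equation*}
It therefore suffices to find a direction $\nu$ close to $\nu_0$ for which the ``hole set''
\begin{equation*}
H := \bigl\{y \in P_\nu \cap B(0,1) : \{y + t \nu : t \in \R\} \cap K \cap B(0,1) = \emptyset\bigr\}
\end{equation*}
satisfies $\HH^{N-1}(H) \leq \varepsilon \omega_{N-1}$.

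\textbf{Rigid-motion approximations and universal jump bound.} Because $K \cap B$ lies in the slab $\{|x \cdot \nu_0| \leq \beta\}$, the function $u$ is well-defined on each half-ball $B^\pm := B(0, 1 - \delta) \cap \{\pm x \cdot \nu_0 > 2\beta\}$. A Korn--Poincar\'e estimate on these domains, combined via H\"older with the $\omega_p$ bound (this is what forces $p > 2(N-1)/N$), yields rigid motions $a_j(x) = b_j + A_j x$, $j = 1, 2$, such that $\|u - a_j\|_{L^2(B^\pm)}^2$ is controlled by $\omega_p(0,1)$. Set $J := |b_1 - b_2| + |A_1 - A_2|$. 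By Lemma \ref{lem_Jinit}, there is a universal $\tau_0 > 0$ with $J \geq \tau_0$ once $\beta + \omega_p$ is small enough; the lemma is proved by a competitor that erases $K \cap B$ and transitions from $a_1$ to $a_2$ in a thin slab around $P$ via a cutoff $\xi$, so that the saved surface is of order $\omega_{N-1}$ while the created elastic energy is of order $J^2 + \omega_p$, contradicting quasiminimality if $J$ is small.

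\textbf{Tilted direction and slicing.} The naive projection along $\nu_0$ may lose all the jump information, because $(b_1 - b_2) + (A_1 - A_2) y$ can be nearly tangent to $P$ for many $y$; crucially, since $A_j$ is skew-symmetric, $((A_1 + A_2)\nu)\cdot \nu = 0$ and only $A_1 - A_2$ enters the $\nu$-component. An elementary selection argument then produces a unit vector $\nu$ with $|\nu - \nu_0|$ bounded by a small universal constant, such that for most $y \in P_\nu \cap B(0,1/2)$,
\begin{equation*}
\bigl|\bigl((b_1 - b_2) + (A_1 - A_2) y\bigr)\cdot \nu\bigr| \geq c_0\, J,
\end{equation*}
with $c_0 > 0$ universal, and such that $K$ still lies within $O(\beta)$ of $P_\nu$. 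For $y \in H$, the fibre $I_y := \{y + t \nu : |t| \leq 1/2\}$ avoids $K$ and $u \in W^{1,2}(I_y)$; the fundamental theorem of calculus applied to $t \mapsto u(y + t\nu)\cdot \nu$, using $(\nabla u\, \nu)\cdot \nu = (e(u)\nu)\cdot \nu$, and approximating $u$ at the two endpoints by the $a_j$'s from Step~1, yields (this is Lemma \ref{lem_slicing}, which is where the co-area formula is avoided)
\begin{equation*}
c_0 J \leq \int_{I_y} |e(u)|\, dt + (\text{$L^2$ error in } y).
\end{equation*}
Squaring, integrating over $y \in H$, and applying Fubini and Cauchy--Schwarz gives
\begin{equation*}
c_0^2 J^2\, \HH^{N-1}(H) \leq C \int_{B(0,1)} |e(u)|^2 \, dx + (\text{error}) \leq C \omega_p + (\text{error}) \leq C \varepsilon_1.
\end{equation*}
Since $J \geq \tau_0$ is universal, $\HH^{N-1}(H) \leq C \varepsilon_1 / \tau_0^2 \leq \varepsilon \omega_{N-1}$ for $\varepsilon_1$ small enough, which is the desired conclusion.

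\textbf{Main obstacle.} The most delicate point is the choice of $\nu$: the effective jump splits into a $y$-constant part $(b_1 - b_2)\cdot \nu$ and a $y$-linear part $((A_1 - A_2) y)\cdot \nu$, and one must pick a single tilt close to $\nu_0$ that captures a universal fraction of \emph{both} parts, uniformly in $y$. This is precisely why the slicing argument (which replaces co-area) loses a multiplicative universal constant, and this loss is tolerable only because $\tau_0$ in Lemma \ref{lem_Jinit} is itself a universal threshold. In the scalar Mumford--Shah case there is no rotational part and the choice of $\nu$ is trivial; the necessity of controlling both the translation and the infinitesimal rotation is what distinguishes the Griffith setting. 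A secondary technical issue is Step~1, where the Korn--Poincar\'e estimate on a half-ball minus the trapped set $K$, together with the $L^p$ exponent appearing in $\omega_p$, is precisely what pins down the range $p > 2(N-1)/N$.
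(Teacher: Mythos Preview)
Your overall strategy---project onto a tilted hyperplane, bound the hole set by slicing, and feed in the universal jump threshold from Lemma~\ref{lem_Jinit}---is the paper's strategy. But two steps do not go through as written.

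\textbf{Missing case split.} You invoke Lemma~\ref{lem_Jinit} to get $J \geq \tau_0$, but for \emph{topological} quasiminimizers that lemma carries the hypothesis~\eqref{eq_Jtopo} that $K$ does \emph{not} separate the reference balls $D_1, D_2$: the competitor in its proof erases $K \cap B$, and this is a topological competitor only under non-separation. When $K$ \emph{does} separate $D_1$ and $D_2$, the jump $J$ may vanish (take $u$ piecewise rigid with different rigid motions on each side), so your slicing step gives nothing. The paper treats this case first and directly: if $K$ separates, then every segment $y + [-2\varepsilon_1, 2\varepsilon_1]\nu$ with $y \in P_\nu \cap B(0, 1 - 4\varepsilon_1)$ must meet $K$ (otherwise it would connect $D_1$ to $D_2$ in $\Omega\setminus K$), so $\pi_\nu(K \cap B)$ already covers $P_\nu \cap B(0, 1 - 4\varepsilon_1)$ and there is no hole to estimate.

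\textbf{Wrong exponent in the slicing bound.} After slicing you square, apply Cauchy--Schwarz, and arrive at $c_0^2 J^2\,\HH^{N-1}(H) \leq C \int_B |e(u)|^2 = C\,\omega_2(0,1)$, which you then write as $\leq C\omega_p$. For $p < 2$ one has $\omega_p \leq \omega_2$, not the reverse, so this fails on the range $p \in (2(N-1)/N, 2)$; your argument only covers $p = 2$. The paper keeps the estimate linear: integrating the pointwise identity over $y \in H$ and using Fubini gives
\[
\int_H \bigl|(b_1 - b_2)\cdot\nu - y\cdot\bigl((A_1 - A_2)\nu\bigr)\bigr| \,dy \;\leq\; C \int_B |e(u)| \;=\; C\,\omega_1(0,1)^{1/2} \;\leq\; C\,\omega_p(0,1)^{1/2},
\]
and then Lemma~\ref{lem_int_rigid} (the estimate $\int_E |c + v\cdot y|\,dy \geq C^{-1}(|c| + |v|)\,|E|^2$) bounds the left side below by $C^{-1} J(\nu)\,\HH^{N-1}(H)^2$. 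Note the square lands on $\HH^{N-1}(H)$, not on $J$: one obtains $J(\nu)\,\HH^{N-1}(H)^2 \leq C\,\omega_p^{1/2}$, and after choosing $\nu$ via Lemma~\ref{lem_int_rigid2} so that $J(\nu)\geq C(\varepsilon_1)^{-1} J \geq C(\varepsilon_1)^{-1}\tau_0$, the hole is small once $\omega_p$ is taken small depending on $\varepsilon_1$. A related issue: your claim that the effective jump is $\geq c_0 J$ ``for most $y$'' does not directly control the integral over $H$, since $H$ could lie entirely in the exceptional set; Lemma~\ref{lem_int_rigid} sidesteps this by working with the average over $H$ from the start.
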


The proof will need several preliminary lemmas that we write below.

\subsection{Initialization of the jump}\label{section_jump}

We define the ``normalized jump'' similarly to \cite{DavidBOOK}.
Let $(u,K)$ be a pair in $\Omega$. Let $x_0 \in K$, $r_0 > 0$ such that $B(x_0,r_0) \subset \Omega$ and $\beta_K(x_0,r_0) \leq 1/2$. We choose a hyperplane $P_0$ which achieves the infimum in the definition of $\beta(x_0,r_0)$ and we choose a unit normal $\nu_0$ to $P$. We define $a_1, a_2$ as the two rigid motions that approximate $u$ in the lower and upper part of $B(x_0,r_0)$, namely for $i = 1,2$,
\begin{equation*}
    a_i(x) = b_i + A_i(x - x_0)
\end{equation*}
where $b_i \in \R^N$ and $A_i \in \R^{N \times N}$ are such that
\begin{equation}\label{eq_average_rigid}
    b_i = \fint_{D_i} u(y) \dd{y}, \quad A_i = \fint_{D_i} \frac{\nabla u(y) - \nabla u(y)^T}{2} \dd{y}
\end{equation}
and $D_i \subset \subset B(x_0,r_0) \setminus K$ is the domain defined by
\begin{equation}\label{eq_domains}
    D_1 := B(x_0 + (3r_0/4) \nu_0,r_0/8), \quad D_2 := B(x_0 - (3r_0/4) \nu_0,r_0/8).
\end{equation}
Then, we define the \emph{normalized jump} of $u$ in $B(x_0,r_0)$ as
\begin{equation}\label{eq_normalized_jump}
    J(x_0,r_0) := \frac{\abs{b_1 - b_2} + r_0 |A_1 - A_2|}{\sqrt{r_0}}.
\end{equation}
This quantity is invariant under rescaling, see Remark \ref{rmk_scaling}.
We also recall the definition of the $p$-normalized elastic energy, defined for $p \geq 1$ by
\begin{equation*}
    \omega_p(x_0,r_0):= r_0^{1-2N/p}\left(\int_{B(x_0,r_0)\setminus K} \abs{e(u)}^p \dd{x}\right)^{\frac{2}{p}}.
\end{equation*}

A classical argument in \cite{DavidBOOK}, which is also a first step toward the proof of Proposition \ref{prop_mainHole}, says that when $\beta$ and $\omega$ are small enough then $J$ is bounded from below. We are going to adapt the argument to Griffith quasiminimizers.
We first recall a basic estimate about the harmonic extension  from a sphere to the ball. The proof is given in \cite[Lemma 22.32]{DavidBOOK}.
\begin{lemma}(Estimate about an extension \cite[Lemma 22.32]{DavidBOOK})\label{lem_harmonic}
    For each $p\in (2(N-1)/N,2]$, there is a constant $C \geq 1$ (which depends on $N$ and $p$) such that if $B=B(x,r)$ is a ball in $\R^N$ and $f\in W^{1,p}(\partial B)$ then there is a function $v\in W^{1,2}(B)$ such that 
    $$\int_{B} \abs{\nabla v}^2 \dd{x} \leq C r^{N-\frac{2N}{p}+\frac{2}{p}} \left(\int_{\partial B} |\nabla f|^p\right)^{\frac{2}{p}}$$
    and $v$ has a trace on $\partial B$ that coincide with $f$ almost-everywhere.
\end{lemma}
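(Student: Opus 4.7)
The plan is to reduce to the unit ball by scaling, take $v$ to be the harmonic (Poisson) extension of $f$, and estimate its Dirichlet energy via a Sobolev embedding on the $(N-1)$-dimensional sphere.

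\textbf{Step 1 (Scaling to $r=1$).} Given $B=B(x,r)$, set $\tilde f(\sigma):=f(x+r\sigma)$ on $\partial B(0,1)$. A direct change-of-variables computation using $\nabla_{S^{N-1}}\tilde f(\sigma)=r\,\nabla_{\partial B}f(x+r\sigma)$ and $\dd\HH^{N-1}(x+r\sigma)=r^{N-1}\dd\HH^{N-1}(\sigma)$ gives $\int_{\partial B(0,1)}|\nabla \tilde f|^p = r^{p-(N-1)}\int_{\partial B}|\nabla f|^p$. If I produce an extension $\tilde v$ of $\tilde f$ on $B(0,1)$ with $\int_{B(0,1)}|\nabla \tilde v|^2\le C\left(\int_{\partial B(0,1)}|\nabla \tilde f|^p\right)^{2/p}$, then $v(y):=\tilde v((y-x)/r)$ extends $f$ and satisfies $\int_B|\nabla v|^2 = r^{N-2}\int_{B(0,1)}|\nabla \tilde v|^2$; combining the two scalings gives exactly the claimed exponent $r^{N-2N/p+2/p}$. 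Thus it suffices to treat $r=1$, and after subtracting the mean I may assume $\overline f:=\fint_{\partial B(0,1)}f=0$.

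\textbf{Step 2 (Poisson extension and $H^{1/2}$ energy identity).} Let $v$ be the harmonic extension of $f$. Expanding $f=\sum_{k\ge 1}Y_k$ in spherical harmonics and using the classical formula $v(\rho\sigma)=\sum_{k\ge 1}\rho^{k}Y_k(\sigma)$, a direct computation in spherical coordinates together with orthogonality yields
\begin{equation*}
    \int_{B(0,1)}|\nabla v|^2\,\dd x \;\asymp\; \sum_{k\ge 1} k\,\|Y_k\|_{L^2(\partial B(0,1))}^2,
\end{equation*}
and the right-hand side is (by the spectral definition of fractional Sobolev norms on the sphere) equivalent to the square of the Gagliardo seminorm $[f]_{H^{1/2}(\partial B(0,1))}^2$.

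\textbf{Step 3 (Sobolev embedding on the sphere).} On the $(N-1)$-dimensional sphere one has the embedding $W^{1,p}\hookrightarrow W^{1/2,2}=H^{1/2}$ precisely when the dimensional balance $1-(N-1)/p \ge 1/2 - (N-1)/2$ holds, i.e.\ when $p\ge 2(N-1)/N$. The hypothesis $p>2(N-1)/N$ guarantees this embedding with a constant depending only on $N$ and $p$; combined with the Sobolev–Poincaré inequality (here is where $\overline f=0$ is used), it yields $[f]_{H^{1/2}(\partial B(0,1))}^2\le C\bigl(\int_{\partial B(0,1)}|\nabla f|^p\bigr)^{2/p}$. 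Chaining with Step~2 finishes the case $r=1$, and Step~1 gives the general inequality.

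\textbf{Main obstacle.} No individual ingredient is deep, but the exponent $2(N-1)/N$ in the hypothesis is exactly the scaling-critical threshold for the embedding invoked in Step~3, so the delicate point is matching constants and verifying that the strict inequality $p>2(N-1)/N$ gives a finite embedding constant depending only on $N$ and $p$. A more hands-on alternative, closer in spirit to David's proof in \cite{d}, avoids fractional Sobolev spaces and instead defines $v(\rho\sigma)$ as a spherical average of $f$ on the geodesic cap of radius $1-\rho$ around $\sigma$: both radial and tangential components of $\nabla v$ are then dominated by the maximal function of $|\nabla_{\partial B}f|$ at scale $1-\rho$, and the same critical exponent emerges when integrating the weight $\rho^{N-1}$ against the scale-dependent Poincaré inequality on spherical caps.
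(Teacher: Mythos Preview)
Your argument is correct. The paper does not give its own proof of this lemma; it simply cites \cite[Lemma 22.32]{d} (David) and moves on. So there is no ``paper's proof'' to compare against in the usual sense, only the referenced one.

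Your route via the Poisson extension and the embedding $W^{1,p}(\mathbf{S}^{N-1})\hookrightarrow H^{1/2}(\mathbf{S}^{N-1})$ is clean and makes the critical exponent $2(N-1)/N$ appear transparently from the Sobolev scaling on the $(N-1)$-sphere. The scaling in Step~1 and the spherical-harmonic identity in Step~2 are both correct as stated. David's original argument, which you allude to at the end, is more constructive: it builds $v(\rho\sigma)$ by averaging $f$ over caps of aperture $\sim 1-\rho$ and controls $\nabla v$ pointwise by a maximal-type quantity, which is useful when one wants an explicit extension operator rather than the abstract harmonic one. Both approaches yield the same dependence on $N$ and $p$; yours is shorter if one is willing to quote the fractional embedding, while David's is self-contained.
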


\begin{lemma}(Initialization of the jump)\label{lem_Jinit}
    For each $M \geq 1$ and $p\in (2(N-1)/N,2]$, there exists a constant $\tau_0 > 0$ (depending on $N$, $\C$, $M$ and $p$) such that the following holds.
    Let $(u,K)$ be a topological $M$-quasiminimizer with gauge $h$ in $\Omega$. For all $x_0 \in K$, for all $r_0 > 0$ such that $B(x_0,r_0) \subset \Omega$,  
    \begin{equation*}
        \beta_K(x_0,r_0) + \omega_p(x_0,r_0) + h(r_0) \leq \tau_0,
    \end{equation*}
    and
    \begin{equation}\label{eq_Jtopo}
        \text{$D_1$ and $D_2$ lie in the same connected component of $\Omega \setminus K$},
    \end{equation}
    where $D_1$ and $D_2$ are the domains defined in (\ref{eq_domains}), then we have
    \begin{equation*}
        J(x_0,r_0) \geq \tau_0.
    \end{equation*}
\end{lemma}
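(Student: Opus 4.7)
The plan is to argue by contradiction. By the scaling invariance of Remark \ref{rmk_scaling}, I normalize to $x_0 = 0$, $r_0 = 1$, so the hypotheses become $\beta + \omega_p + h(1) \leq \tau_0$ together with (\ref{eq_Jtopo}). Suppose for contradiction $J(0, 1) \leq \tau$ for some $\tau \in (0, \tau_0)$ to be chosen. I will build a topological competitor $(v, L)$ for $(u, K)$ in $B(0, 1)$ whose Griffith $M$-energy is strictly smaller, contradicting $M$-quasiminimality. The heuristic is that, under (\ref{eq_Jtopo}), a small $J$ forces $u$ to be close to a single rigid motion on the common connected component of $B(0, 1) \setminus K$ containing both $D_1$ and $D_2$, making $K \cap B(0, 1)$ expendable; the Ahlfors lower bound (\ref{eq_AF}) supplies the definite amount $\HH^{N-1}(K \cap B(0, 1)) \geq C^{-1}$ that is saved by deleting it.

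First, via Fubini on the radial variable together with Eilenberg's inequality applied to $x \mapsto \abs{x}$, I pick $\rho \in (1/2, 1)$, close enough to $1$, with the following good properties: $\HH^{N-1}(K \cap B(0, 1) \setminus B(0, \rho))$ is of order $(1 - \rho)$, the integral of $\abs{e(u)}^p$ on $\partial B(0, \rho)$ is controlled by a universal constant times $\omega_p$, and the trace of $u$ on each of the two spherical caps of $\partial B(0, \rho)$, away from a thin tubular neighborhood of $K$, is $L^2$-close to $a_1$ on the upper cap and to $a_2$ on the lower cap. The closeness of the traces follows from Korn--Poincar\'e on the upper and lower halves of $B(0, 1) \setminus K$, the small flatness $\beta \leq \tau_0$ making those halves well-defined up to the thin slab containing $K$. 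I set $L := K \setminus B(0, \rho)$.

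The main topological step is to verify that $L$ is a topological competitor of $K$ in $B(0, 1)$. Given points $p, q \in \Omega \setminus (K \cup B(0, 1))$ connected by a path $\gamma$ in $\Omega \setminus L$ that enters $B(0, \rho)$, its entry and exit points $p_1, p_2 \in \partial B(0, \rho) \setminus K$ each lie strictly on one side of $P_0$ by the small flatness; the corresponding half of $B(0, 1) \setminus K$, away from a thin neighborhood of $P_0$, is path-connected and contains the corresponding $D_i$; and (\ref{eq_Jtopo}) supplies a path from $D_1$ to $D_2$ in $B(0, 1) \setminus K$. Concatenating these into a detour in $B(0, 1) \setminus K$ and splicing it into $\gamma$ yields the desired path in $\Omega \setminus K$, proving that $K$-separations are preserved by $L$. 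For the function $v$, I set $v = u$ on the annulus $B(0, 1) \setminus B(0, \rho)$ (off $K$), and inside $B(0, \rho)$ define $v(x) := \phi(x) a_1(x) + (1 - \phi(x)) a_2(x)$ for a smooth scalar cutoff $\phi$ equal to $1$ on the upper half of $B(0, \rho)$, $0$ on the lower, transitioning across a strip of fixed width around $P_0$. Since $e(a_1) = e(a_2) = 0$, one has pointwise $\abs{e(v)} \leq C \abs{\nabla \phi} \abs{a_1 - a_2}$; together with $\abs{a_1 - a_2} \leq 2 J$ on $B(0, 1)$, this yields $\int_{B(0, \rho)} \abs{e(v)}^2 \, \dd{x} \leq C J^2$. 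A thin cutoff in a collar near $\partial B(0, \rho)$ mediates the trace matching between the interpolation inside and $u$ outside, with an extra elastic cost of order $\omega_p/(1 - \rho)^2$; balancing this against the area loss $\lesssim M(1 - \rho)$ by choosing $(1 - \rho) \sim \omega_p^{1/3}$ makes both contributions of order $\omega_p^{1/3}$.

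Applying $M$-quasiminimality and the Ahlfors lower bound (\ref{eq_AF}) on $\HH^{N-1}(K \cap B(0, 1))$, we obtain
\[
    C^{-1} M^{-1} \leq C J^2 + C \omega_p^{1/3} + h(1),
\]
which is violated for $\tau$ and $\tau_0$ small enough depending only on $N$, $\C$, $M$, $p$, providing the desired contradiction and hence $J \geq \tau_0$. The main obstacle is the topological verification: the detour argument hinges on the small flatness to place the entry and exit points definitively on one side of $P_0$ and to keep each half of $B(0, 1) \setminus K$ path-connected up to a $\beta$-neighborhood of $P_0$, which requires a careful construction based on the hyperplane approximation.
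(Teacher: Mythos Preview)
Your outline has the right shape but contains a genuine gap at the gluing step. You set $L = K \setminus B(0,\rho)$ with no added surface on $\partial B(0,\rho)$, so the competitor function $v$ must lie in $W^{1,2}_{\mathrm{loc}}(\Omega \setminus L)$; in particular $v$ must be $W^{1,2}$ across the \emph{entire} open ball $B(0,\rho)$, including the slab $\{|x_N|\leq\beta\}$ where $K$ sits. But your ``thin cutoff in a collar'' interpolates between the interior function $\phi a_1+(1-\phi)a_2$ and $u$, and $u$ jumps across $K$ in that slab. Hence $v$ inherits a jump across $K\cap\{\rho-\delta<|x|<\rho\}$ and is not admissible. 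The paper avoids this by adding a small spherical band $Z=\{x\in\partial B(0,\rho):|x_N|\leq 3\varepsilon_0\}$ to the competitor set, paying an area cost $\HH^{N-1}(Z)\leq C\varepsilon_0$, and then matching traces only on $\partial B(0,\rho)\setminus Z$ where $u$ is controlled by Korn--Poincar\'e. The same device also cleans up your topological verification: your entry/exit points $p_1,p_2\in\partial B(0,\rho)\setminus K$ need \emph{not} lie strictly on one side of $P_0$ (they could be in the slab but off $K$), whereas a path avoiding $K^*=(K\setminus B(0,\rho))\cup Z$ is forced to cross $\partial B(0,\rho)$ outside the slab.

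There is a second gap in the energy accounting. You claim the collar interpolation costs $\omega_p/(1-\rho)^2$, but Korn--Poincar\'e only controls $\int|u-a_i|^p$, not $\int|u-a_i|^2$; for $p<2$ there is no direct conversion. This is precisely where the exponent restriction $p>2(N-1)/N$ enters in the paper: one selects a good radius $\rho$ so that $u-\bar{u}\in W^{1,p}(\partial B(0,\rho)\cap\{|x_N|>\varepsilon_0\})$ with controlled norm, extends to the full sphere by a cutoff (cost absorbed by $Z$), and then invokes Lemma~\ref{lem_harmonic} to produce a $W^{1,2}$ extension $v$ into the ball with $\int_B|\nabla v|^2\leq C\varepsilon_0^{-2}\omega_p$. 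The competitor is $u^*=v+\bar{u}$ in $B(0,\rho)$, which is manifestly $W^{1,2}$ on the whole ball and matches $u$ on $\partial B(0,\rho)\setminus Z$. Your balancing $(1-\rho)\sim\omega_p^{1/3}$ is therefore unfounded; the correct trade-off is between the $Z$-area $C\varepsilon_0$ and the extension energy $C\varepsilon_0^{-2}(\omega_p+J^2)$.
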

The proof is similar to that of \cite[Proposition 42.10]{DavidBOOK}. We proceed by contradiction and by assuming $\beta + \omega + J^{-1} + h \ll 1$, one build a better competitor of $u$ by removing $K \cap B(x_0,r_0)$ and making an interpolation between the two rigid motions $a_1$ and $a_2$. The quantity $J$ estimates the cost of such an interpolation.
The assumption (\ref{eq_Jtopo}) ensures that when we remove a piece of $K$ in $B(x_0,r_0)$, we still have a topological competitor.
Note that if $(u,K)$ is a plain quasiminimizer (without the topological constraint \eqref{eq_topo_competitor} on competitors), the assumption (\ref{eq_Jtopo}) is not needed.
\begin{proof}
    We let the letter $C$ denotes a constant $\geq 1$ which depends only on $N$, $\C$, $M$ and $p$.
    Since the statement is invariant under rescaling, we can assume that $B(x_0,r_0) = B(0,1)$ and we choose a system of coordinates such that the infimum in the definition of the flatness is achieved for $P_0 = \set{x_N = 0}$.
    Let $(u,K)$ be a topological $M$-quasiminimizer with gauge $h$ in $B(0,1)$.
    We let $\varepsilon_0, \varepsilon_1, \varepsilon_2 \in (0,1/10)$ be a small parameter such that
    \begin{equation}\label{eq_Jhyp}
        \beta_K(0,1) \leq \varepsilon_0,\quad \omega_p(0,1) \leq \varepsilon_1,\quad h(1) \leq \varepsilon_2.
    \end{equation}
    We let $a_1(x) = b_1 + A_1 x$ and $a_2(x) = b_2 + A_2 x$ be two rigid motions approximating $u$ in the upper and lower part of $B(0,1)$, as defined in (\ref{eq_average_rigid}).
    We introduce the open ring $R := B(0,1) \setminus \overline{B}(0,3/4)$
    According to  Korn-Poincaré inequality in the domains $\set{x \in R | \pm x_N > \varepsilon_0}$ (which are disjoint from $K$), we have
    \begin{equation}\label{eq_JKorn}
        \int_{R \cap \set{x_N > \varepsilon_0}} \abs{u - a_1}^p + \abs{\nabla u - \nabla a_1}^p \dd{x} \leq C \omega_p(0,1)^{p/2}
    \end{equation}
    and similarly
    \begin{equation}\label{eq_JKorn2}
        \int_{R \cap \set{x_N < -\varepsilon_0}} \abs{u - a_2}^p + \abs{\nabla u - \nabla a_2}^p \dd{x} \leq C \omega_p(0,1)^{p/2}.
    \end{equation}
    Note that the constant $C$ here is independent of $\varepsilon_0 \in (0,1/10)$.
    We start by building an interpolation of these two rigid motions in the ring $R$.
    We consider a function $\varphi_0 \in C^{\infty}(R)$ such that $0 \leq \varphi_0 \leq 1$, $\abs{\nabla \varphi_0} \leq C \varepsilon_0^{-1}$ and
    \begin{equation*}
        \text{$\varphi_0 = 1$ in $R \cap \set{x_N > \varepsilon_0}$}, \qquad \text{$\varphi_0 = 0$ in $R \cap \set{x_N < -\varepsilon_0}$}.
    \end{equation*}
    Then we let $\bar{u} : R \to \R^N$ be defined by
    \begin{equation*}
        \bar{u}(x) = \varphi_0(x) a_1(x) + (1 - \varphi_0(x)) a_2(x).
    \end{equation*}
    We compute
    \begin{equation}
        e(\bar{u}) = \nabla \varphi_0(x) \odot (a_1(x) - a_2(x))
    \end{equation}
    and we observe that the elastic energy of such an interpolation is controlled by $J$, namely,
    \begin{equation*}
        \int_{R} \abs{e(\bar{u})}^2 \dd{x} \leq C \varepsilon_0^{-2} J(0,1)^2.
    \end{equation*}
    The inequalities (\ref{eq_JKorn}) and (\ref{eq_JKorn2}) can be reformulated as
    \begin{equation*}
        \int_{R \cap \set{\abs{x_N} > \varepsilon_0}} \abs{u - \bar{u}}^p + \abs{\nabla u - \nabla \bar{u}}^p \dd{x} \leq C \omega_p(0,1)^{p/2}
    \end{equation*}
    and this allows to select a radius $\rho \in (3/4,1)$ such that
    \begin{equation*}
        \begin{gathered}
        \text{$u - \bar{u} \in W^{1,2}(\partial B(0,\rho) \cap \set{\abs{x_N} > \varepsilon_0};\R^N)$}\\
        \text{with a tangential derivative given by the restriction of  $\nabla u - \nabla \bar{u}$,}
        \end{gathered}
    \end{equation*}
    and
    \begin{equation}\label{eq_Jcoarea}
        \int_{\partial B(0,\rho) \cap \set{\abs{x_N} > \varepsilon_0}} \abs{u - \bar{u}}^p + \abs{\nabla u - \nabla \bar{u}}^p \dd{\HH^{N-1}} \leq C \omega_p(0,1)^{p/2}.
    \end{equation}
    Then, we make an extension of $u - \bar{u}$ from $\partial B(0,\rho) \cap \set{\abs{x_N} > \varepsilon_0}$ to the whole sphere $\partial B(0,\rho)$.
    We set $B := B(0,\rho)$ and we consider a function $\varphi \in C^1(\partial B)$ such that $0 \leq \varphi \leq 1$, 
    \begin{equation*}
            \text{$\varphi = 1$ in $\set{x \in \partial B | \abs{x_N} > 3 \varepsilon_0}$},\quad \text{$\varphi = 0$ in $\set{x \in \partial B | \abs{x_N} < 2\varepsilon_0}$}
    \end{equation*}
    and $\abs{\nabla \varphi} \leq C \varepsilon_0^{-1}$. We finally define $f(x) := \varphi(x) (u(x) - \bar{u}(x)) \in W^{1,2}(\partial B;\R^N)$.
    We have
    \begin{equation*}
        \abs{\nabla f} \leq \abs{\varphi} \abs{\nabla u - \nabla \bar{u}} + \abs{\nabla \varphi} \abs{u - \bar{u}}
    \end{equation*}
    and by (\ref{eq_Jcoarea}) and the facts that $\abs{\varphi} \leq 1$ and $\abs{\nabla \varphi} \leq C \varepsilon_0^{-1}$, we can estimate
    \begin{equation*}
        \int_{\partial B} \abs{\nabla f}^p \dd{\HH^{N-1}}  \leq C \int_{\partial B(0,\rho) \cap \set{\abs{x_N} > \varepsilon_0}} \abs{u - \bar{u}}^p + \abs{\nabla u - \nabla \bar{u}}^p \dd{\HH^{N-1}} \leq C \varepsilon_0^{-p} \omega_p(0,1)^{p/2}.
    \end{equation*}
    Then by Lemma \ref{lem_harmonic}, there exists a function $v \in W^{1,2}(B;\R^N)$ with a trace which coincides with $f$ almost-everywhere on $\partial B$ such that
    \begin{equation*}
        \int_B \abs{\nabla v}^2 \dd{x} \leq C \left(\int_{\partial B} \abs{\nabla f}^p\right)^{2/p} \leq C \varepsilon_0^{-2} \omega_p(0,1).
    \end{equation*}
    We finally define a competitor $(u^*,K^*)$ of $(u,K)$ in $B(0,1)$ by
    \begin{equation*}
        K^* := \left(K \setminus B(0,\rho)\right) \cup Z,
    \end{equation*}
    where $Z := \set{x \in \partial B(0,\rho) | \abs{x_N} \leq 3\varepsilon_0}$, and
    \begin{equation*}
        u^* := \begin{cases}
            v(y) + \bar{u}(y) &\text{in $B(0,\rho)$}\\
            u(y)        &\text{in $\Omega \setminus \left(B(0,\rho) \cup Z\right)$.}
        \end{cases}
    \end{equation*}
    Remember that $f = u(y) - \bar{u}(y)$ on $\partial B(0,\rho) \setminus Z$ so the two functions glue well along $\partial B(0,\rho) \setminus Z$.
    We also need to check that it satisfies the topological condition \eqref{eq_topo_competitor}, i.e., that all $x,y \in \Omega \setminus (K \cup \overline{B}(0,\rho))$ which are not separated by $K^*$, are not separated by $K$ either.
    So let $\gamma$ be a continuous path connecting $x,y$ in the complement of $K^*$. If $\gamma$ never meets $\overline{B}(0,\rho)$, then it also connects $x,y$ in the complement of $K$ because $K^*$ coincides with $K$ outside of $\overline{B}(0,\rho)$. If $\gamma$ meets $\overline{B}(0,\rho)$, then it also meets $\partial B(0,\rho)$ and it can only be at a point of $\partial B(0,\rho) \setminus Z$. By considering the first time at which $\gamma$ meets $\partial B(0,\rho)$, we see that $x$ is connected to $\partial B(0,\rho) \setminus Z$ in the complement of $K$. The same holds for $y$. By assumption, there exists a connected component $\mathcal{O}$ of $\Omega \setminus K$ which contains the domains $D_1$ and $D_2$, defined in (\ref{eq_domains}). The sets
    \begin{equation*}
        \set{x \in B(0,1) | x_N > 3\varepsilon_0} \quad \text{and} \quad \set{x \in B(0,1) | x_N < -3\varepsilon_0}
    \end{equation*}
    are connected subset of $\Omega \setminus K$ which meet $\mathcal{O}$ (because they contain $D_1$ and $D_2$) so they are also contained in $\mathcal{O}$. As a conclusion, we see that both $x$ and $y$ are connected to $\partial B(0,\rho) \setminus Z$ in the complement of $K$ and $\partial B(0,\rho) \setminus Z \subset \mathcal{O}$ so $x$ and $y$ are connected in the complement of $K$.

    The pair $(u^*,K^*)$ is a topological competitor of $(u,K)$ and $(u^*,K^*)$ in all balls $B(0,t)$ where $t \in (\rho,1)$ and we deduce
    \begin{equation*}
        \int_{B(0,\rho)} \CC{e(u)} \dd{x} + M^{-1} \HH^{N-1}(K \cap \overline{B}(0,\rho)) \leq \int_{B(0,\rho)} \CC{e(u^*)} \dd{x} + M \HH^{N-1}(Z) + h(1).
    \end{equation*}
    If $\varepsilon_2 \leq \varepsilonA$, where $\varepsilonA$ is the required parameter for the density lower bound (\ref{eq_AF}), then (\ref{eq_Jhyp}) yields $h(1) \leq \varepsilonA$ so
    \begin{equation*}
        M^{-1} \HH^{N-1}(K \cap B(0,\rho)) \geq C^{-1}.
    \end{equation*}
    On the other hand, $\HH^{N-1}(Z) \leq C \varepsilon_0$ and
    \begin{align*}
        \int_{B(0,\rho)} \CC{e(u^*)} \dd{x} &\leq C \int_{B(0,\rho)} \abs{\nabla v}^2 \dd{x} + C \int_{B(0,\rho)} \abs{e(w)}^2 \dd{x}\\
                                            &\leq C \varepsilon_0^{-2} \left(\omega_p(0,1) + J(0,1)^2\right)
    \end{align*}
    so the energy comparison yields $$C^{-1} \leq C \left(\varepsilon_0 + \varepsilon_0^{-2} \varepsilon_1 + \varepsilon_0^{-2} J(0,1)^2 \right) + \varepsilon_2,$$ where now $C \geq 1$ is a fixed constant which depends only on $N$, $\C$, $M$, $p$.
    We fix $\varepsilon_0$ and $\varepsilon_2$ small enough such that $C \varepsilon_0 \leq C^{-1}/6$, and $\varepsilon_2 \leq C^{-1}/6$. Then we choose $\varepsilon_1$ small enough such that $C \varepsilon_0^{-2} \varepsilon_1 \leq C^{-1}/6$. We arrive at $C/2 \leq C \varepsilon_0^{-2} J(0,1)^2$, which bounds $J(0,1)$ from below by constant which depends only on $N$, $\C$, $M$ and $p$. The statement follows for a suitable choice of $\tau_0$.
\end{proof}

\subsection{Size of holes through a projection.}

The following Lemma estimates the size of holes through a projection by a slicing technique. It reminds   an argument that 
{\sc Rigot} \cite{Rigot} performed   in the scalar case, but it is more intricate to use it here in the Griffith setting.
This complexity arises because the estimates involve only the component of the jump in the direction of the slicing.

\begin{lemma}\label{lem_slicing}
    Let $(u,K)$ be a pair in $\Omega$. Let $x_0 \in K$, $r_0 > 0$, and $\varepsilon \in (0,1/4)$ be such that $B(x_0,r_0) \subset \Omega$ and $\beta_K(x_0,r_0) \leq \varepsilon$. Let $P_0$, $\nu_0$ and $a_1$, $a_2$ be as in the beginning of Section \ref{section_jump}.
    Then for all unit vector $\nu \in \mathbf{S}^{N-1}$ such that $\abs{\nu - \nu_0} \leq \varepsilon$, we have
    \begin{equation*}
        J(\nu) \left(\frac{\HH^{N-1}\left(S(x_0,r_0,\nu,\varepsilon)\right)}{r_0^{N-1}}\right)^2 \leq C \varepsilon^{-1} \omega_1(x_0,r_0)^{1/2},
    \end{equation*}
    where $S(x_0,r_0,\nu,\varepsilon)$ is the size of holes through slicing in the direction $\nu$,
    \begin{equation*}
        S(x_0,r_0,\nu,\varepsilon) := P \cap B(x_0,(1 - 4\varepsilon) r_0) \setminus \pi_P(K \cap B(x_0,r_0)),
    \end{equation*}
    $P$ is the hyperplane $x_0 + \nu^\perp$, $\pi_P$ the orthogonal projection onto $P$, $J(\nu)$ is the component of the jump in the direction $\nu$,
    \begin{equation*}
        J(\nu) := \frac{\abs{(b_1 - b_2) \cdot \nu} + r_0 \abs{(A_1 - A_2) \nu}}{\sqrt{r_0}},
    \end{equation*}
    and $C \geq 1$ is a universal constant.
\end{lemma}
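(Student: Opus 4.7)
The plan is to exploit a slicing along the direction $\nu$. Translating so that $x_0 = 0$, the flatness bound reads $K\cap B(0,r_0)\subset\{|x\cdot\nu_0|\leq\varepsilon r_0\}$, and I introduce the two caps $\mathcal{U}_+:=B(0,r_0)\cap\{x\cdot\nu_0>\varepsilon r_0\}$ and $\mathcal{U}_-:=B(0,r_0)\cap\{x\cdot\nu_0<-\varepsilon r_0\}$, which are disjoint from $K$ and contain $D_1,D_2$ respectively. Set $\alpha:=(b_1-b_2)\cdot\nu$ and $\beta:=(A_1-A_2)\nu$; by antisymmetry $\beta\in\nu^\perp$, and $\sqrt{r_0}\,J(\nu)=|\alpha|+r_0|\beta|$. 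The crucial observation is that since $A_i$ is antisymmetric, $t\mapsto a_i(y+t\nu)\cdot\nu$ is constant, equal to $b_i\cdot\nu-y\cdot(A_i\nu)$. For any $y$ in the good set $G$ from the statement, the whole segment $\{y+t\nu : |t|<\sqrt{r_0^{2}-|y|^{2}}\}$ lies in $B(0,r_0)\setminus K$; integrating $\partial_{t}(u(y+t\nu)\cdot\nu) = \nu^{T}e(u)(y+t\nu)\nu$ and subtracting the corresponding identities for $a_1,a_2$ yields, for any $t_\pm$ with $y+t_\pm\nu\in\mathcal{U}_\pm$,
\begin{equation*}
    \alpha-y\cdot\beta \;=\; \int_{t_-}^{t_+}\nu^{T}e(u)(y+t\nu)\nu\,dt \;+\; [v_{1}(y+t_+\nu)-v_{2}(y+t_-\nu)]\cdot\nu,
\end{equation*}
where $v_i:=u-a_i$.

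For $y\in G$, a short geometric computation using $|y|\leq(1-4\varepsilon)r_0$, $|y\cdot\nu_0|\leq\varepsilon r_0$ and $\nu\cdot\nu_0\geq 1/2$ shows that the admissible $t_+$ (and $t_-$) form an interval of length at least $c\sqrt{\varepsilon}\,r_0$. Averaging the identity above over $(t_+,t_-)$ in such intervals, taking absolute values, integrating in $y\in G$, and using Fubini for the isometry $(y,t)\mapsto y+t\nu$ leads to
\begin{equation*}
    \int_G|\alpha-y\cdot\beta|\,dy \;\leq\; \int_{B(0,r_0)}|e(u)|\,dx \;+\; \frac{C}{\sqrt\varepsilon\,r_0}\Bigl(\int_{\mathcal{U}_+}|v_1|\,dx + \int_{\mathcal{U}_-}|v_2|\,dx\Bigr).
\end{equation*}
A Korn--Poincar\'e argument on the near-hemispheres $\mathcal{U}_\pm$, combined with the equivalence of $L^{1}$-norms of rigid motions on $D_i\subset\mathcal{U}_i$ (which lets me pass from the optimal Korn motion to the specific $a_i$ defined through (\ref{eq_average_rigid})), gives $\int_{\mathcal{U}_i}|v_i|\leq Cr_0\int_{B(0,r_0)}|e(u)|$, whence
\begin{equation*}
    \int_G|\alpha-y\cdot\beta|\,dy \;\leq\; C\varepsilon^{-1/2}\int_{B(0,r_0)}|e(u)|\,dx.
\end{equation*}

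The heart of the argument is then a purely geometric lower bound: for any measurable $G\subset P\cap B(0,r_0)$ with $\HH^{N-1}(G)=H$ and any $\alpha\in\R$, $\beta\in\nu^\perp$,
\begin{equation*}
    \int_G|\alpha-y\cdot\beta|\,dy \;\geq\; c\,(|\alpha|+r_0|\beta|)\,\frac{H^{2}}{r_0^{N-1}}.
\end{equation*}
I would prove this by combining two complementary bounds. First, the pushforward of $\HH^{N-1}\mres G$ under $y\mapsto y\cdot\beta/|\beta|$ is a measure on $\R$ of mass $H$ and density bounded by $Cr_0^{N-2}$ (the maximal $(N-2)$-area of a planar slice of $B(0,r_0)$); for any such measure $\mu$, a direct rearrangement gives $\inf_c\int|t-c|\,d\mu(t)\geq H^{2}/(Cr_0^{N-2})$, the extremal $\mu$ being uniform on a small centered interval. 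This yields $\int_G|\alpha-y\cdot\beta|\,dy\geq cr_0|\beta|H^{2}/r_0^{N-1}$. Second, Jensen gives $\int_G|\alpha-y\cdot\beta|\,dy\geq H|\alpha-\bar y\cdot\beta|$ with $\bar y:=H^{-1}\int_G y$. A dichotomy on $|\alpha-\bar y\cdot\beta|$ concludes: either this quantity is $\geq|\alpha|/2$, and using $H\leq\omega_{N-1}r_0^{N-1}$ one extracts the $|\alpha|$-contribution; or it is $<|\alpha|/2$, which forces $r_0|\beta|\gtrsim|\alpha|$ and the first bound alone handles both terms.

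Combining the upper and lower bounds yields $(|\alpha|+r_0|\beta|)H^{2}/r_0^{N-1}\leq C\varepsilon^{-1}\int_{B(0,r_0)}|e(u)|\,dx$, which after rewriting in terms of $J(\nu)=(|\alpha|+r_0|\beta|)/\sqrt{r_0}$ and $\omega_1(x_0,r_0)^{1/2}=r_0^{(1-2N)/2}\int|e(u)|$ is exactly the stated inequality. I expect the main difficulty to be the affine-function lower bound above: extracting both the $|\alpha|$ and the $r_0|\beta|$ contributions from a single $L^{1}$-integral of an affine function requires the dichotomy, since each of the two bounds in isolation only produces one of the two terms.
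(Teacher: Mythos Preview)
Your proof is correct and follows essentially the same route as the paper: slice along $\nu$, use the antisymmetry of $A_i$ to see that $t\mapsto a_i(y+t\nu)\cdot\nu$ is constant, control the discrepancy $u-a_i$ on the caps by Korn--Poincar\'e, and conclude via the affine lower bound $\int_G|\alpha-y\cdot\beta|\gtrsim(|\alpha|+r_0|\beta|)\HH^{N-1}(G)^2/r_0^{N-1}$, which is exactly Lemma~\ref{lem_int_rigid} of the Appendix (your rearrangement-plus-dichotomy proof is a mild variant of the paper's sublevel-set-plus-case-analysis proof of that lemma).

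The only implementation difference is that the paper builds an auxiliary function $v$ via a cutoff of width $\sim\varepsilon r_0$ (so that $v$ equals $a_1,a_2$ exactly near the endpoints of a short segment), paying $|\nabla\varphi|\lesssim\varepsilon^{-1}$ and hence an $\varepsilon^{-1}$ in the final bound; you instead integrate $u$ directly along the full segment and average the endpoint terms $v_i(y+t_\pm\nu)$ over intervals of length $\sim\sqrt\varepsilon\,r_0$, which yields the sharper factor $\varepsilon^{-1/2}$. Both give the stated lemma; your version is slightly cleaner in that it avoids the auxiliary construction.
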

\begin{proof}
    The letter $C\geq 1$ denotes a universal constant   whose value might change from one line to another.
    Since all the quantities involved in the inequality are invariant under standard rescaling, see Remark \ref{rmk_scaling}, we can assume that $B(x_0,r_0) = B(0,1)$ without loss of generality.
    We let $\nu \in \mathbf{S}^{N-1}$ be a unit vector such that $\abs{\nu - \nu_0} \leq \varepsilon$.
    First of all, we observe that since
    \begin{equation*}
        K \cap B(0,1) \subset \set{\abs{x \cdot \nu_0} \leq \varepsilon},
    \end{equation*}
    and $\abs{\nu - \nu_0} \leq \varepsilon$, we also have
    \begin{equation}\label{eq_K0}
        K \cap B(0,1) \subset \set{\abs{x \cdot \nu } \leq 2 \varepsilon}.
    \end{equation}
    In what follows, we assume that $\nu$ is the last vector of the canonic basis to simplify the notations.
    We decompose each point $x \in \R^N$ as $x = x' + x_N e_N$, where $x' \in \R^{N-1}$ and $x_N = x \cdot e_N \in \R$.
    We let $a_1$ and $a_2$ denote the rigid motions that were defined in (\ref{eq_average_rigid}), at the beginning of Section \ref{section_jump}.

    \vspace{0.5cm}\noindent
    \emph{Step 1. Building an auxiliary function.}
    We build a function $v \in W^{1,2}_{\mathrm{loc}}(B(0,1) \setminus K;\R^N)$ such that
    \begin{align*}
        v(x) = a_1(x) &\ \text{in} \ B(0,1) \cap \set{x_N \geq 4 \varepsilon}\\
        v(x) = a_2(x) &\ \text{in} \ B(0,1) \cap \set{x_N \leq -4 \varepsilon},
    \end{align*}
    and the following estimate holds
            \begin{equation*}
                \int_{B(0,1) \setminus K} \abs{e(v)} \dd{x} \leq C \varepsilon^{-1} \int_{B(0,1) \setminus K} \abs{e(u)} \dd{x}.
            \end{equation*}
    We consider a smooth cut-off function $\varphi_1 : \R^N \to \R$ equal to $1$ on $\set{x_N \geq 4 \varepsilon}$, equal to $0$ on $\set{x_N \leq 2\varepsilon}$, with $0 \leq \varphi_1 \leq 1$ and $\abs{\nabla \varphi_1} \leq C \varepsilon^{-1}$. We also consider an other cut-off function $\varphi_2 : \R^N \to \R$ equal to $1$ on $\set{x_N \leq -4 \varepsilon}$, equal to $0$ on $\set{x_N \geq -2\varepsilon}$, with $0 \leq \varphi_2 \leq 1$ and $\abs{\nabla \varphi_2} \leq C \varepsilon^{-1}$.
    We finally define
    \begin{equation*}
        v(x) := \varphi_1(x) a_1(x) + \varphi_2(x) a_2(x) + (1 - \varphi_1(x)) (1 - \varphi_2(x)) u(x).
    \end{equation*}
    This function $v$ defined above clearly belongs to $W^{1,2}_{\mathrm{loc}}(B(0,1) \setminus K;\R^N)$ and satisfies properties (1) and (2) of the statement.
    Let us estimate the energy of $v$ in the region $B(0,1) \cap \set{x_N > 2\varepsilon} \setminus K$.
    In this domain, we know that $\varphi_2 = 0$ so that the expression of $v$ reduces to $v = \varphi_1 a_1 + (1 - \varphi_1) u$ and therefore 
    \begin{align*}
        \abs{e(v)} &\leq \abs{\nabla \varphi_1} \abs{a_1 - u} + (1 - \varphi_1) \abs{e(u)}\\
                   &\leq C \varepsilon^{-1} \abs{a_1 - u} + \abs{e(u)}.
    \end{align*}
    We recall that the rigid motion $a_1$ is the average rigid motion of $u$ in the domain $D_1 := B((3/4) \nu_0,1/8)$.
    Since $\abs{e_N - \nu_0} \leq \varepsilon$ and $\varepsilon \leq 1/4$, we have $e_N \cdot \nu_0 \geq 1 - \varepsilon^2/2 > 5/6$ and thus for $x \in D_1$,
    \begin{equation}\label{eq_D0}
        \abs{x_N} \geq (3/4) (e_N \cdot \nu_0) - 1/8 > (3/4) \cdot (5/6) - 1/8 = 1/2.
    \end{equation}
    Hence, $D_1$ is contained in $B(0,1) \cap \set{x_N > 2\varepsilon}$, which is a Lipschitz domain disjoint from $K$ by (\ref{eq_K0}). So by Korn-Poincaré inequality, we have
    \begin{equation*}
        \int_{B(0,1) \cap \set{x_N > 2\varepsilon}} \abs{u - a_1} \dd{x} \leq C \int_{B(0,1)} \abs{e(u)} \dd{x}.
    \end{equation*}
    We conclude that
    \begin{equation*}
        \int_{B(0,1) \cap \set{x_N > 2\varepsilon}} \abs{e(v)} \dd{x} \leq C \varepsilon^{-1} \int_{B(0,1)} \abs{e(u)} \dd{x}.
    \end{equation*}
    We can estimate the energy of $v$ in $B(0,1) \cap \set{x_N < -2\varepsilon}$ in the same way.
    And in the domain $B(0,1) \cap \set{- 2\varepsilon < x_N < 2\varepsilon} \setminus K$, we have $v = u$ so there is nothing to do.

    \vspace{0.5cm}\noindent
    \emph{Step 2. Controlling the size of holes in the projection by slicing.} This step is based on the elementary observation that 
    \begin{equation*}
        \frac{\dd}{\dd{t}} \left[v(x + t e_{N}) \cdot e_N\right] = \left(e(v)(x + te_N) e_N\right) \cdot e_N.
    \end{equation*}
    Let $S := P \cap B(0,1-4\varepsilon) \setminus \pi_P(K \cap B(0,1))$, where we recall that $P = \set{x_N = 0}$.
    Then for all $x' \in S$, we can integrate along a vertical segment from $x^- = x' - 4\varepsilon e_N$ to $x^+ = x' +  4\varepsilon e_N$.
    This yields
    \begin{equation*}
        \left(a_2(x^+) - a_1(x^-)\right) \cdot e_N = \int_{-4\varepsilon}^{4\varepsilon} \left(e(v)(x' + t e_N) e_N\right) \cdot e_N \dd{t}.
    \end{equation*}
    Then we apply Fubini and integrate with respect to $x' \in S$, namely,
    \begin{align*}
        \int_S \abs{\left(a_2(x^+) - a_1(x^-)\right) \cdot e_N} \dd{x'} &= \int_S \int_{-4 \varepsilon}^{4\varepsilon} \left(e(v)(x' + t e_N) e_N\right) \cdot e_N \dd{t} \dd{x'}\\
                                                                        &\leq \int_{B(0,1)} \abs{e(v)} \dd{x}\\
                                                                        &\leq C \varepsilon^{-1} \int_{B(0,1)} \abs{e(u)} \dd{x}.
    \end{align*}
    Now we recall that $a_i(x) = b_i + A_i x$, where $A$ is a skew-symmetric matrix so
    \begin{align*}
        a_i(x' \pm 2 \varepsilon e_N) \cdot e_N &= b_i \cdot e_N + (A_i x') \cdot e_N \pm 2 \varepsilon (A_i e_N) \cdot e_N \\
                                                &= b_i \cdot e_N - (A_i e_N) \cdot x'
    \end{align*}
    and we arrive at
    \begin{equation*}
        \int_S \abs{(b_2 - b_1) \cdot e_N - x' \cdot \left((A_2 - A_1) e_N\right)} \dd{x'} \leq C \varepsilon^{-1} \int_{B(0,1)} \abs{e(u)} \dd{x}.
    \end{equation*}
In view of Lemma \ref{lem_int_rigid} in Appendix (applied in $\R^{N-1}$), this gives finally
    \begin{equation*}
        \HH^{N-1}(S)^2 \left(\abs{(b_2 - b_1) \cdot e_N} + \abs{(A_2 - A_1) e_N}\right) \leq C \varepsilon^{-1} \int_{B(0,1)} \abs{e(u)} \dd{x}.
    \end{equation*}
\end{proof}

\subsection{Proof of Proposition \texorpdfstring{\ref{prop_mainHole}}{4.1}}

\newcommand{\varepsilonW}{\varepsilon_2}
\newcommand{\varepsilonB}{\varepsilon_1}
\newcommand{\varepsilonG}{\varepsilon_0}

\begin{proof}[Proof of Proposition \ref{prop_mainHole}]
    As usual, we let $C\geq 1$ denote a generic constant   which depends only on $N$, $\C$, $M$ and $p$.
    We let $P_0$, $\nu_0$, $D_1$, $D_2$ and $a_1$, $a_2$ be as in the beginning of Section \ref{section_jump}.
    We fix $\varepsilon > 0$ and we let $\varepsilonG, \varepsilonB, \varepsilonW \in (0,1/4)$ be small parameters (they will be chosen small enough depending on $\varepsilon$) such that
    \begin{equation*}
        \beta(x_0,r_0) \leq \varepsilonB,\quad \omega_p(x_0,r_0) \leq \varepsilonW,\quad h(r_0) \leq \varepsilonG.
    \end{equation*}
    We consider a unit vector $\nu \in \mathbf{S}^{N-1}$ such that $\abs{\nu - \nu_0} \leq \varepsilonB$ and we let $P = x_0 + \nu^\perp$ and $\pi_P$ denote the orthogonal projection onto $P$.
    Since orthogonal projections are $1$-Lipschitz, we can bound the measure of $K$ from below by the measure of its projection
    \begin{align*}
        \HH^{N-1}(K \cap B(x_0,r_0)) &\geq \HH^{N-1}(\pi_P(K \cap B(x_0,r_0))\\
                                     &\geq \omega_{N-1} r_0^{N-1} - \HH^{N-1}(P \cap B(x_0,r_0) \setminus \pi_P(K \cap B(x_0,r_0)).
    \end{align*}
    Now the goal of the proof is to control
    \begin{equation*}
        \HH^{N-1}(P \cap B(x_0,r_0) \setminus \pi_P(K \cap B(x_0,r_0))
    \end{equation*}
    for a suitable choice of vector $\nu$. We can first bound
    \begin{equation*}
        \HH^{N-1}(P \cap B(x_0,r_0) \setminus B(x_0, (1 - 4 \varepsilonB)r_0)) \leq C \varepsilonB r_0^{N-1},
    \end{equation*}
    and are left to deal with
    \begin{equation*}
        \HH^{N-1}(P \cap B(x_0,(1 - 4\varepsilonB) r_0) \setminus \pi_P(K \cap B(x_0,r_0))).
    \end{equation*}

    We focus first on the case where $K$ separates the domain $D_1$ and $D_2$.
    We recall the fact seen just below (\ref{eq_D0}), that $D_1$ is contained in $B(0,1) \cap \set{x \cdot \nu > 2\varepsilonB}$ and $D_2$ is contained in $B(0,1) \cap \set{x \cdot \nu < -2\varepsilonB}$, which are convex domains disjoint from $K$.
    Hence, for all $x \in P \cap B(x_0,(1 - 4\varepsilonB)r_0)$, the segment $x + [-2\varepsilonB,2\varepsilonB] \nu$ must meet $K$ otherwise it could be used to connect $D_1$ and $D_2$. Thus the projection $\pi_P(K \cap B(x_0,r_0))$ contains $P \cap B(x_0,(1-4\varepsilonB)r_0)$. We conclude in this case that
    \begin{equation*}
        \HH^{N-1}(K \cap B(x_0,r_0)) \geq \HH^{N-1}(\pi_P(K \cap B(x_0,r_0)) \geq \omega_{N-1} r_0^{N-1} - C \varepsilonB r_0^{N-1}.
    \end{equation*}
    It then suffices to choose $\varepsilonB$ small enough (depending on $N$, $\varepsilon$) to conclude the theorem statement.

    We now assume that $K$ does not separate the domain $D_1$ and $D_2$ and this will allow us to use Lemma \ref{lem_Jinit}.
    We know by Lemma \ref{lem_slicing} that for all $\nu \in \mathbf{S}^{N-1}$ such that $\abs{\nu - \nu_0} \leq \varepsilonB$, we have
    \begin{equation*}
        J(\nu) \left(\frac{\HH^{N-1}\left(S(x_0,r_0,\nu,\varepsilon)\right)}{r_0^{N-1}}\right)^2 \leq C \varepsilonB^{-1} \omega_1(x_0,r_0)^{1/2},
    \end{equation*}
    where
    \begin{equation}
        S(x_0,r_0,\nu,\varepsilon) = P \cap B(x_0,(1 - 4\varepsilonB) r_0) \setminus \pi_P(K \cap B(x_0,r_0)).
    \end{equation}
    $P = x_0 + \nu^\perp$ and $\pi_P$ is the orthogonal projection onto $P$.
    We are then looking for a vector $\nu$ close to $\nu_0$ such that $J(\nu)$ is bounded from below.
    To simplify the notations, we set $b := b_2 - b_1$ and $A := A_2 - A_1$.
    According to Lemma \ref{lem_int_rigid2}), we have
    \begin{equation*}
        \int_{\nu \in \mathbf{S}^{N-1} \cap B(\nu_0,\varepsilonB)} \abs{b \cdot \nu} + \abs{A \nu} \dd{\HH^{N-1}(\nu)} \geq C(\varepsilonB)^{-1} \left(\abs{b} + r_0 |A|\right)
    \end{equation*}
    where $C(\varepsilonB) \geq 1$ also depends on $\varepsilonB$ and is allowed to take a bigger value in the next lines.
    We can thus find a vector $\nu \in \mathbf{S}^{N-1}$ such that $\abs{\nu - \nu_0} \leq \varepsilonB$ and
    \begin{equation*}
        J(\nu) \geq C(\varepsilonB)^{-1} J(x_0,r_0),
    \end{equation*}
    where $J(x_0,r_0)$ is the normalized jump defined in (\ref{eq_normalized_jump}).
    For this choice of $\nu$, we have
    \begin{equation*}
        J(x_0,r_0) \left(\frac{\HH^{N-1}\left(S(x_0,r_0,\nu,\varepsilon)\right)}{r_0^{N-1}}\right)^2 \leq C(\varepsilonB) \omega_1(x_0,r_0)^{1/2}.
    \end{equation*}
    Now, we let $\tau_0$ be the constant of Lemma \ref{lem_Jinit}, which depends only on $N$, $\C$, $M$, $p$, and we take $\varepsilonB \leq \tau_0/3$, $\varepsilonW \leq \tau_0/3$ and $\varepsilonG = \tau_0/3$ so that $J(x_0,r_0) \geq \tau_0^{-1}$. Using also the fact that $\omega_1 \leq \omega_p$, we arrive at
    \begin{equation*}
        \left(\frac{\HH^{N-1}\left(S(x_0,r_0,\nu,\varepsilon)\right)}{r_0^{N-1}}\right)^2 \leq C(\varepsilonB) \varepsilonW^{1/2}.
    \end{equation*}
    We conclude that
    \begin{equation*}
        \HH^{N-1}(K \cap B(x_0,r_0)) \geq \left(\omega_{N-1} - C \varepsilonB - C(\varepsilonB) \varepsilonW^{1/4}\right) r_0^{N-1}.
    \end{equation*}
    We can first fix $\varepsilonB$ such that $\varepsilonB \leq \omega_{N-1} \varepsilon/2$ and then $\varepsilonW$ even smaller such that $C(\varepsilonB) \varepsilonW^{1/4} \leq \omega_{N-1} \varepsilon/2$, which yields
    \begin{equation*}
        \HH^{N-1}(K \cap B(x_0,r_0)) \geq \omega_{N-1} (1 - \varepsilon) r_0^{N-1},
    \end{equation*}
    and finishes the proof.
\end{proof}

\section{Uniform concentration property}

In this section we will prove the uniform concentration property that was announced in the introduction, i.e. in Theorem \ref{prop_uc}. We recall the definition of a uniformly concentrated sequence given in \cite[Section 35]{DavidBOOK}.

\begin{definition}\label{defi_uc}
    Let $(E_i)_i$ and $E$ be relatively closed subsets of $\Omega$. We say that the sequence $(E_i)_i$ is uniformly concentrated with respect to $E$ provided that for all $\varepsilon \in (0,1)$, there exists a constant $C(\varepsilon) \geq 1$ such that the following holds. For all $x \in E$, there exists $r(x) > 0$ such that for all $0 < r \leq r(x)$, for all $i$ large enough, we can find a ball $B(y_i,\rho_i) \subset \Omega \cap B(x,r)$ with $\rho_i \geq C(\varepsilon)^{-1} r$ and
    \begin{equation*}
        \HH^{N-1}(E_i \cap B(y_i,\rho_i)) \geq (1 - \varepsilon) \omega_{N-1} \rho_i^{N-1},
    \end{equation*}
    where $\omega_{N-1}$ is the measure of the $(N-1)$-dimensional unit disk.
\end{definition}
As mentioned in introduction, this property implies the lower semi-continuity of the area,
\begin{equation}\label{eq_uc}
    \HH^{N-1}(E) \leq \liminf_{i \to +\infty} \HH^{N-1}(E_i).
\end{equation}
We refer to \cite{MSBOOK} or \cite[Theorem 35.4]{DavidBOOK} for a proof.
We then show that for a Griffith almost-minimizers, the density of $K$ is greater than $1 - \varepsilon$ when the normalized elastic energy is small.
This result improves Proposition \ref{prop_mainHole} by removing the flatness assumption and finds its intuition in the fact that $K$ behaves like a minimal sets in regime of low elastic energy.

\begin{proposition}\label{prop_omega}
    For each $p \in (2(N-1)/N,2]$ and $\varepsilon > 0$, there exists a constant $\varepsilon_0 > 0$ (depending on $N$, $\C$, $p$, $\varepsilon$) such that the following holds.
    Let $(u,K)$ be a topological almost-minimizer with gauge $h$ in $\Omega$. For all $x_0 \in K$ and for all $r_0 > 0$ such that $B(x_0,r_0) \subset \Omega$ and 
    \begin{equation}\label{eq_omegaHyp}
        \omega_p(x_0,r_0) + h(r_0) \leq \varepsilon_0,
    \end{equation}
    we have
    \begin{equation*}
        \HH^{N-1}(K \cap B(x_0,r_0)) \geq (1 - \varepsilon) \omega_{N-1} r_0^{N-1},
    \end{equation*}
    where $\omega_{N-1}$ is the measure of the unit $(N-1)$-dimensional disk.
\end{proposition}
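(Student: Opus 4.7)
I would proceed by contradiction and compactness. Suppose the proposition fails for some $\varepsilon > 0$: after rescaling via Remark~\ref{rmk_scaling}, one obtains a sequence of topological almost-minimizers $(u_i, K_i)$ in open sets $\Omega_i \supset \overline{B}(0,1)$ with gauges $h_i$, with $0 \in K_i$, $\omega_p^{(i)}(0,1) + h_i(1) \to 0$, yet
\[
    \HH^{N-1}(K_i \cap B(0,1)) < (1-\varepsilon)\,\omega_{N-1}.
\]
By uniform Ahlfors regularity \eqref{eq_AF}--\eqref{eq_AF2} and Lemma~\ref{lem_compactness}, a subsequence converges to some pair $(u,K)$, with $0 \in K$ by Hausdorff convergence. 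Lemma~\ref{lem_semicontinuity_energy} forces $e(u) \equiv 0$ on $B(0,1)\setminus K$, so $u$ is piecewise rigid. The vanishing-energy case of Proposition~\ref{prop_weak_limit} then shows that $K$ is a topological minimal set (with gauge $0$) in $B(0,1)$, together with the one-sided bound $\limsup_i \HH^{N-1}(K_i\cap\overline{B}(0,r)) \leq \HH^{N-1}(L\cap\overline{B}(0,r))$ for every topological competitor $L$ of $K$.

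\textbf{Main obstacle: rectifiability of $K$.} The heart of the argument is to show that $K$ is $\HH^{N-1}$-rectifiable. This is delicate because Hausdorff limits of rectifiable, uniformly Ahlfors-regular sets are not rectifiable in general, so the topological minimality of $K$ must be exploited. Following the strategy of Fang and De~Lellis et al., I would analyze blow-ups of $K$ at generic points, combining topological minimality, the one-sided mass bound from Proposition~\ref{prop_weak_limit}, and Ahlfors regularity to produce approximate tangent planes at $\HH^{N-1}$-a.e.\ point of $K$, hence rectifiability.

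\textbf{Localization by flat points.} Once rectifiability is granted, at $\HH^{N-1}$-a.e.\ $x \in K$ one has both $\beta_K(x,r) \to 0$ and $\HH^{N-1}(K \cap B(x,r))/(\omega_{N-1} r^{N-1}) \to 1$ as $r \to 0^+$. Fix $\varepsilon' > 0$ (to be chosen) and let $\varepsilon_1 = \varepsilon_1(\varepsilon')$ be the constant from Proposition~\ref{prop_mainHole} with $M=1$. For each such flat point $x$, choose $r_x > 0$ so small that $\overline{B}(x,r_x) \subset B(0,1)$, $\beta_K(x,r_x) \leq \varepsilon_1/2$, and $\HH^{N-1}(K \cap B(x,r_x)) \leq (1+\varepsilon')\omega_{N-1} r_x^{N-1}$. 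Hausdorff convergence produces points $x_i \in K_i$ with $x_i \to x$ and, for $i$ large, $\beta_{K_i}(x_i, r_x) \leq \varepsilon_1$; meanwhile $\omega_p^{(i)}(x_i,r_x) \leq r_x^{1-2N/p} \omega_p^{(i)}(0,1) \to 0$ and $h_i(r_x) \leq h_i(1) \to 0$. Proposition~\ref{prop_mainHole} applied to $(u_i,K_i)$ on $B(x_i,r_x)$ therefore gives, for $i$ large,
\[
    \HH^{N-1}(K_i \cap B(x_i,r_x)) \geq (1-\varepsilon') \omega_{N-1} r_x^{N-1}.
\]

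\textbf{Vitali covering and contradiction.} A Vitali covering of $K \cap B(0,1)$ by disjoint balls $B(x_j, r_j)$ of the above type, combined with a diagonal extraction in $i$, yields
\[
    \liminf_i \HH^{N-1}(K_i \cap B(0,1)) \geq (1-\varepsilon') \sum_j \omega_{N-1} r_j^{N-1} \geq \frac{1-\varepsilon'}{1+\varepsilon'}\, \HH^{N-1}(K \cap B(0,1)).
\]
Together with the density lower bound $\HH^{N-1}(K \cap B(0,1)) \geq \omega_{N-1}$, which holds since $K$ is topologically minimal and $0 \in K$ (topological minimality in the sense of Definition~\ref{defi_minimalset} implies Almgren-type minimality via the Borsuk deformations of Remark~\ref{rmk_borsuk}, whence the classical monotonicity formula applies), one obtains $\liminf_i \HH^{N-1}(K_i \cap B(0,1)) \geq \frac{1-\varepsilon'}{1+\varepsilon'}\, \omega_{N-1}$. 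Choosing $\varepsilon'$ small enough that $\frac{1-\varepsilon'}{1+\varepsilon'} > 1-\varepsilon$ contradicts the standing assumption, completing the proof.
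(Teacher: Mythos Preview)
Your plan matches the paper's strategy almost exactly: contradiction and compactness, identify the limit $K$ as a minimal set via Proposition~\ref{prop_weak_limit}, prove $K$ is rectifiable, apply Proposition~\ref{prop_mainHole} at flat points to recover lower semicontinuity of the area along the sequence, and finish via monotonicity for minimal sets.

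The genuine gap is rectifiability, which you correctly flag as the heart of the matter but do not prove. Invoking ``blow-ups of $K$ at generic points'' is not an argument here: without rectifiability one has no approximate tangent planes to begin with, and the Fang/De~Lellis machinery you allude to still needs a concrete mechanism to exclude a purely unrectifiable piece. The paper supplies one, a Federer--Fleming type argument. Write $K = E \cup F$ with $E$ rectifiable and $F$ purely unrectifiable, and fix $x \in F$ where $E$ has vanishing $(N-1)$-density. Averaging over centres $z \in B(x,r/2)$ the quantity $\HH^{N-1}\big(\phi_z(K \cap \overline B(x,r))\big)$, where $\phi_z$ is the radial projection onto $\partial B(x,r)$, a Fubini computation converts the average into $\int_{G(N,N-1)} \HH^{N-1}\big(p_V(K \cap \overline B(x,r))\big)\,dV$, which by the Besicovitch--Federer projection theorem is $\leq C\,\HH^{N-1}(E \cap \overline B(x,r)) = o(r^{N-1})$. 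One can therefore pick a centre $z \notin K$ whose radial projection yields a topological competitor (Remark~\ref{rmk_borsuk}) of area $o(r^{N-1})$, contradicting the lower bound $\mu(B(x,r)) \geq C^{-1} r^{N-1}$ coming from uniform Ahlfors regularity and the one-sided inequality of Proposition~\ref{prop_weak_limit}. Hence $\HH^{N-1}(F) = 0$.

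Two minor remarks on the rest. Your Vitali covering for lower semicontinuity is a legitimate alternative to the paper's route, which instead passes through the weak limit measure $\mu$ of $\HH^{N-1}\mres K_i$ and shows $\mu \geq \HH^{N-1}\mres K$ via a density theorem. And your final line ``$\HH^{N-1}(K \cap B(0,1)) \geq \omega_{N-1}$ by monotonicity'' needs one more step: monotonicity only yields $\HH^{N-1}(K \cap B(0,1)) \geq \theta(0)$, and $\theta(0) \geq \omega_{N-1}$ is not automatic from $0 \in K$. The paper uses rectifiability (already in hand) plus corality to find $x \in K$ arbitrarily close to $0$ with $\theta(x) = \omega_{N-1}$, and then applies monotonicity centred at $x$.
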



\begin{proof}[Proof of Proposition \ref{prop_omega}]
    As usual, we let $C\geq 1$ denote a generic constant  which depends only $N$, $\C$, $p$.
    By standard rescaling, we assume that $B(x_0,r_0) = B(0,1)$ without loss of generality.

    \vspace{0.5cm}\noindent
    \emph{Step 1. Contradiction and compactness.}
    We proceed by contradiction and find a parameter $c \in (0,1)$ and sequence of topological almost minimizers $(u_i,K_i)$ in $B(0,1)$ such that $0 \in K_i$,
    \begin{equation}\label{eq_omega_init}
        \int_{B(0,1)} \abs{e(u_i)}^p \dd{x} + h_i(1) \to 0 
    \end{equation}
    and
    \begin{equation*}
        \HH^{N-1}(K_i \cap B(0,1)) < (1 - c) \omega_{N-1}.
    \end{equation*}
    Since $h_i(1) \to 0$, we can extract a subsequence (not relabelled) such that for all $i$, $h_i(1) \leq \varepsilonA$, where $\varepsilonA$ is the constant needed for (\ref{eq_AF}) and (\ref{eq_AF2}). We thus have
    \begin{equation*}
        \sup_i \int_{B(0,1)} \abs{e(u)}^2 \dd{x} + \HH^{N-1}(K_i \cap B(0,1)) < +\infty,
    \end{equation*}
    and we can extract a subsequence such that the measures $(\HH^{N-1} \mres K_i)_i$ converge to a measure $\mu$ and such that the pairs $(u_i,K_i)_i$ converge to a pair $(u,K)$ in $B(0,1)$.
    Since $h_i(1) \leq \varepsilonA$ uniformly, there exists a constant $C \geq 1$ (depending only on $N$, $\C$) such that for all $i$, for all $x \in K_i$ and for all $r > 0$ such that $B(x_i,r) \subset B(0,1)$, we have
    \begin{equation*}
        C^{-1} r^{N-1} \leq \HH^{N-1}(K_i \cap B(x,r)) \leq C r^{N-1}.
    \end{equation*}
    It follows that for all $x \in K$ and for all $r > 0$ such that $B(x,r) \subset B(0,1)$, we have
    \begin{equation*}
        C^{-1} r^{N-1} \leq \mu(B(x,r)) \leq C r^{N-1},
    \end{equation*}
    and
    \begin{equation*}
        C^{-1} r^{N-1} \HH^{N-1}(K \cap B(x,r)) \leq C r^{N-1},
    \end{equation*}
    see Lemma \ref{lem_uniformAF}.
    Moreover, by application of Proposition \ref{prop_weak_limit} and the fact that $$\lim_{i \to +\infty} \int_{B(0,1)} \abs{e(u_i)} \dd{x} + h_i(1) = 0,$$ we know that for all $x \in K$, for all $r > 0$ such that $\overline{B}(x,r) \subset B(0,1)$ and for all topological competitor $L$ of $K$ in $B = B(x,r)$, we have
    \begin{equation}\label{eq_mu_minimality}
        \mu(B(x,r)) \leq \HH^{N-1}(L \cap \overline{B}(x,r)).
    \end{equation}

    \vspace{0.5cm}\noindent
    \emph{Step 2. The limit $K$ is rectifiable}.
    As A limit of rectifiable sets may not be rectifiable in general, we have no other choice than to take advantage of the minimality property (\ref{eq_mu_minimality}) satisfied by the limit.
    For this purpose, we borrow a Federer-Fleming type argument from \cite{Lab}.
    Since $K$ is a Borel set with finite measure in $B(0,1)$, it can be decomposed as disjoint union $K = E \cup F$ of two Borel sets with $E$ being rectifiable and $F$ being purely unrectifiable. We proceed by contradiction and assume that $\HH^{N-1}(F) > 0$.
    We know by standard density theorems \cite[Theorem 6.2(2)]{Mattila} that for $\HH^{N-1}$-a.e. $x \in F$, we have
    \begin{equation*}
        \lim_{r \to 0} r^{1-N} \HH^{N-1}(E \cap B(x,r)) = 0.
    \end{equation*}
    Let us fix such a point $x \in K$ and let us consider a radius $r > 0$ such that $\overline{B}(x,r) \subset \Omega$.
    For $z \in B(x,r) \setminus K$, we let $\phi_z : \overline{B}(x,r) \setminus \set{z} \to \R^N$ be the radial projection centered at $z$ onto $\partial B(x,r)$.
    We are going to see that for a suitable choice of center $z$, the radial projection cancels the purely unrectifiable part of $K$.
    For $d = 1,\ldots,N-1$, we let $G(N,d)$ denote the Grassmannian manifold of all $d$-dimensional linear subspace of $\R^N$.
    We let $\gamma_{N,d}$ denote the canonic probability measure on $G(N,d)$ and we refer to \cite[Chapter 3, \S 3.9]{Mattila} for the definition.
    We shall know that for all Borel set $\mathcal{S} \subset G(N,d)$,
    \begin{equation*}
        \gamma_{N,d}(\mathcal{S}) = \gamma_{N,N-d}(\set{V^\perp | V \in \mathcal{S}}),
    \end{equation*}
    and that for all Borel set $S \subset \partial B(0,1)$, we have
    \begin{equation}\label{eq_SG}
        \HH^{N-1}(S) \leq C \gamma_{N,1}\left(\set{L | L \cap S \ne \emptyset}\right).
    \end{equation}
    Let us justify that for all shifted center $x_0 \in B(0,1/2)$ and for all Borel set $S \subset \partial B(0,1)$, we also have
    \begin{equation}\label{eq_SG2}
        \HH^{N-1}(S) \leq C \gamma_{N,1}\left(\set{L | (x_0 + L) \cap S \ne \emptyset}\right).
    \end{equation}
    If we let $f$ denote the radial projection centered at $x_0$ onto $\partial B(x_0,2)$, then the restriction of $f$ on $S \subset \partial B(0,1)$ is $C$-biLipschitz so $\HH^{N-1}(S) \leq C \HH^{N-1}(f(S))$. 
    Using a rescaled version of (\ref{eq_SG}) in $B(x_0,2)$, we can estimate
    \begin{equation*}
        \HH^{N-1}(f(S)) \leq C \gamma_{N,1}\left(\set{L | (x_0 + L) \cap f(S) \ne \emptyset}\right)
    \end{equation*}
    and we observe from the definition of $f$ that $(x_0 + L) \cap \phi(S) \ne \emptyset$ if and only if $(x_0 + L) \cap S \ne \emptyset$. This proves (\ref{eq_SG2}).
    Let us come back to the ball $B(x,r)$ and the radial projection $\phi_z$ centered on $z$ onto $\partial B(x,r)$.
    Using a rescaled version of (\ref{eq_SG2}) in $B(x,r)$, we deduce that for all $z \in B(x,r/2) \setminus K$, we have
    \begin{equation*}
        \HH^{N-1}(\phi_z(K \cap \overline{B}(x,r))) \leq C r^{N-1} \gamma_{N,1}\left(\set{L | \left(L + z\right) \cap K \cap \overline{B}(x,r) \ne \emptyset}\right).
    \end{equation*}
    With the help of Fubini, we can estimate that on average
    \begin{align*}
        \begin{split}
        &\fint_{B(x,r/2) \setminus K} \HH^{N-1}(\phi_z(K \cap \overline{B}(x,r))) \dd{\HH^{N-1}}(z)\\
        &\qquad \leq C r^{-1} \int_{B(x,r/2) \setminus K} \int_{G(N,1)} \gamma_{N,1}\left(\set{L | (x_0 + L) \cap K \cap \overline{B}(x,r) \ne \emptyset}\right) \dd{L} \dd{\HH^{N-1}}(z)
        \end{split}\\
                                                                        &\qquad \leq C r^{-1} \int_{G(N,1)} \abs{\set{z \in B(x,r/2) | (x_0 + L) \cap K \cap \overline{B}(x,r) \ne \emptyset}} \dd{L}\\
                                                                        &\qquad \leq C r^{-1} \int_{G(N,N-1)} \abs{\set{z \in B(x,r/2) | (x_0 + V^\perp) \cap K \cap \overline{B}(x,r) \ne \emptyset}} \dd{V}\\
                                                                        &\qquad \leq C \int_{G(N,N-1)} \HH^{N-1}(p_V(K \cap \overline{B}(x,r)) \dd{V}.
    \end{align*}
    According to the Besicovitch-Federer projection Theorem \cite[Theorem 18.1]{Mattila}, we have for almost-all hyperplanes $V \in G(N,N-1)$,
    \begin{equation*}
        \HH^{N-1}(p_V(F \cap \overline{B}(x,r)) = 0
    \end{equation*}
    and from the fact that orthogonal projections are $1$-Lipschitz, we have for all $V \in G(N,N-1)$, 
    \begin{equation*}
        \HH^{N-1}(p_V(E \cap \overline{B}(x,r)) \leq \HH^{N-1}(E \cap \overline{B}(x,r)),
    \end{equation*}
    whence
    \begin{equation*}
        \fint_{B(x,r/2) \setminus K} \HH^{N-1}(\phi_z(K \cap \overline{B}(x,r))) \dd{\HH^{N-1}}(z) \leq C \HH^{N-1}(E \cap \overline{B}(x,r)).
    \end{equation*}
    This allows to find a center $z \in B(x,r) \setminus K$ such that
    \begin{equation*}
        \HH^{N-1}(\phi_z(K \cap \overline{B}(x,r))) \leq C \HH^{N-1}(E \cap \overline{B}(x,r)).
    \end{equation*}
    We extend $\phi_z$ out of $\overline{B}(x,r)$ by setting $\phi_z = \mathrm{id}$ and thus $L = \phi_z(K)$ is a topological competitor of $K$ in all balls $B(x,t)$ where $t > r$ and $\overline{B}(x,t) \subset \Omega$, see Remark \ref{rmk_borsuk}.
    We apply (\ref{eq_mu_minimality}) and we use $\mu(B(x,r)) \geq C^{-1} r^{N-1}$ to obtain
    \begin{equation*}
        C^{-1} r^{N-1} \leq \HH^{N-1}(E \cap \overline{B}(x,r)).
    \end{equation*}
    Remember that $x$ is a point such that $\lim_{r \to 0} r^{1-N} \HH^{N-1}(E \cap B(x,r)) = 0$ so we find a contradiction if $r$ is small enough. This proves that we actually have $\HH^{N-1}(F) = 0$ and thus $K$ is rectifiable.
    Note that as a standard consequence of rectifiability and Ahlfors-regularity, we have for $\HH^{N-1}$-a.e. $x \in K$,
    \begin{equation*}
        \lim_{r \to 0} \beta_K(x,r) = 0.
    \end{equation*}


    \vspace{0.5cm}\noindent
    \emph{Step 3. Lower semi-continuity of the area.}
    Our goal now is to prove that for $\HH^{N-1}$-a.e. $x \in K$, we have
    \begin{equation}\label{eq_mu}
        \limsup_{r \to 0} \frac{\mu(B(x,r))}{\omega_{N-1} r^{N-1}} \geq 1.
    \end{equation}
    It will follow from standard density theorems
    \cite[Theorem 6.9(2)]{Mattila} that $\mu \geq \HH^{N-1} \mres K$.
    Let us fix $x \in K$ such that $\lim_{r \to 0} \beta_K(x,r) = 0$.
    By convergence in Hausdorff distance, there exists a sequence of points $x_i \in K_i$ such that $x_i \to x$.
    For $\varepsilon > 0$, we let $\varepsilon_0, \varepsilon_1 \in (0,1)$ be the associated constant given by Proposition \ref{prop_mainHole}. There exists a small radius $r(x) > 0$ such that $\overline{B}(x,r(x)) \subset B(0,1)$ and for all $0 < r \leq r(x)$, it holds $\beta_K(x,2r) < \varepsilon_1/8$.
    The radius $r$ being fixed, let us check that for $i$ big enough, we have $\beta_{K_i}(x_i,r) \leq \varepsilon_1/2$.
    There exists an hyperplane $P$ passing through $x$ such that
    \begin{equation*}
        K \cap B(x,2r) \subset \set{y \; :\; \mathrm{dist}(y,P) < \varepsilon r/4}.
    \end{equation*}
    As $x_i \to x$ and $K_i \to K$, more precisely using (\ref{eq_KCV2}), we see that for $i$ big enough
    \begin{equation*}
        K_i \cap B(x_i,r) \subset K_i \cap \overline{B}(x,3r/2) \subset \set{\mathrm{dist}(\cdot,P) < \varepsilon_1 r/4}.
    \end{equation*}
    Let $P_i$ be the hyperplane parallel to $P$ passing through $x_i$.
    For $i$ big enough, we have $\abs{x_i - x} \leq \varepsilon_1 r/4$ and in particular $P_i$ is a distance $\leq \varepsilon_1 r/4$ from $P$ so
    \begin{equation*}
        K_i \cap B(x_i,r) \subset \set{\mathrm{dist}(\cdot,P_i) < \varepsilon_1 r/2}
    \end{equation*}
    and this justifies that $\beta_{K_i}(x_i,r) \leq \varepsilon_1/2$.
    For $i$ big enough, we also have
    \begin{equation*}
        r^{1-2N/p} \left(\int_{B(x_i,r)} \abs{e(u_i)}^p \dd{x}\right)^{2/p} \leq \varepsilon_1/2
    \end{equation*}
    and $h_i(r) \leq \varepsilon_0$ because of the initial assumption (\ref{eq_omega_init}).
    We can then apply Proposition \ref{prop_mainHole} in $B(x_i,r)$ for $i$ big enough, which shows that
    \begin{equation*}
        \HH^{N-1}(K_i \cap B(x_i,r)) \geq \omega_{N-1} (1 - \varepsilon) r^{N-1}.
    \end{equation*}
    Passing to the limit, we arrive at
    \begin{equation*}
        \mu(\overline{B}(x,r)) \geq \omega_{N-1}(1 - \varepsilon) r^{N-1}.
    \end{equation*}
    From the fact that this holds for $\mu(\overline{B}(x,r))$ for all $0 < r \leq r(x)$, one can also deduce that this holds for $\mu(B(x,r))$ for all $0 < r \leq r(x)$.
    We conclude
    \begin{equation*}
        \liminf_{r \to 0} \frac{\mu(B(x,r))}{\omega_{N-1} r^{N-1}} \geq 1 - \varepsilon,
    \end{equation*}
    and we finally let $\varepsilon \to 0$ to prove our claim (\ref{eq_mu}).

    \vspace{0.5cm}\noindent
    \emph{Step 4. Conclusion.}
    Given that $\mu \geq \HH^{N-1} \mres K$, the minimality condition (\ref{eq_mu_minimality}) actually yields that for all $x \in K$, for all $r > 0$ such that $\overline{B}(x,r) \subset B(0,1)$ and for all topological competitor $L$ of $K$ in $B(x,r)$, we have
    \begin{equation*}
        \HH^{N-1}(K \cap B(x,r)) \leq \HH^{N-1}(L \cap \overline{B}(x,r)).
    \end{equation*}
    In fact, it is possible to remove the closure of the ball at the right-hand side. Here are more details. For small $t > r$ such that $\overline{B}(x,t) \subset B(0,1)$, the set $L$ is a topological competitor of $K$ in $B(x,t)$ so we can replace $B(x,r)$ by $B(x,t)$ in the above inequality. Then we let $t \to r^+$ to obtain
    $\HH^{N-1}(K \cap \overline{B}(x,r)) \leq \HH^{N-1}(L \cap \overline{B}(x,r))$
    and we use the fact that $K$ coincides with $L$ on $\partial B(x,r)$ to recover
    \begin{equation*}
        \HH^{N-1}(K \cap B(x,r)) \leq \HH^{N-1}(L \cap B(x,r)).
    \end{equation*}
    This means that $K$ is a minimal set, see Definition \ref{defi_minimalset}, and in particular, it has a minimal area under continuous deformation, see Remark \ref{rmk_borsuk}.
    These sets have monotone densities (\cite[Proposition 5.16]{David2009} or \cite[Chapter 3 \S 17]{Simon}), i.e., for all $x \in K$ and for all $0 < r < R$ such that $B(x,R) \subset B(0,1)$, we have
    \begin{equation*}
        r^{1-N} \HH^{N-1}(K \cap B(x,r)) \leq R^{1-N} \HH^{N-1}(K \cap B(x,R)).
    \end{equation*}
    As a consequence, the limit
    \begin{equation*}
        \theta(x) := \lim_{r \to 0} r^{1-N} \HH^{N-1}(K \cap B(x,r))
    \end{equation*}
    exists and is finite at all points $x \in E$.
    We also know by rectifiability that for $\HH^{N-1}$-a.e. $x \in K$, we have $\theta(x) = \omega_{N-1}$.
    As $K$ is coral and contains $0$, we have $\HH^{N-1}(K \cap B(0,\varepsilon)) > 0$ for all $\varepsilon \in (0,1)$ and therefore we can find $x \in E \cap B(0,\varepsilon))$ such that $\theta(x) = \omega_{n-1}$. Then, we estimate by monotonicity
    \begin{align*}
        \omega_{N-1} &\leq (1 - \varepsilon)^{1-N} \HH^{N-1}(K \cap B(x,1-\varepsilon))\\
                     &\leq (1 - \varepsilon)^{1-N}\HH^{N-1}(K \cap B(0,1))
    \end{align*}
    and letting $\varepsilon \to 0$ yields
    \begin{equation*}
        \omega_{N-1} \leq \HH^{N-1}(K \cap B(0,1)).
    \end{equation*}
    This contradicts the fact that $\mu \geq \HH^{N-1} \mres K$ and the initial assumption that $\HH^{N-1}(K_i \cap B(0,1)) \leq \omega_{N-1}(1 - c)$.
\end{proof}

We are now ready to prove the concentration property stated in Theorem \ref{prop_uc}. Notice that in the Mumford-Shah setting, uniform concentration is also known to hold for quasiminimizers.
We expect that this should also be the case in the Griffith setting but our approach, which relies on Proposition \ref{prop_omega}, is not suitable for quasiminimizers.

\begin{proof}[Proof of Theorem \ref{prop_uc}]
    The letter $C\geq 1$ denotes a generic constant   which depends only on $N$ and $\C$.
    Let us fix an exponent $p \in (2(N-1)/N,2]$, say the middle point in the interval so that it depends only on $N$.
    Let us fix $\varepsilon > 0$ and let $\varepsilon_0$ be the associated constant given by Proposition \ref{prop_omega}.
    We want to find a smaller shifted ball where Proposition \ref{prop_omega} applies and for this, we recall the Carleson estimate proved in \cite{DavidBOOK}. It says that for all $x \in K$, for all $r > 0$ such that $B(x,2r) \subset \Omega$ and $h(2r) \leq \varepsilonA$, we have
    \begin{equation*}
        \int_{y \in K \cap B(x,r)} \! \int_0^r \! \omega_p(y,t) \frac{\dd{t}}{t} \dd{\HH^1(y)} \leq C r^{N-1}.
    \end{equation*}
    The proof in \cite{DavidBOOK} is performed with $\abs{\nabla u}^2$ but readily applies with $\abs{e(u)}^2$ since it only uses the Ahlfors-regularity properties (\ref{eq_AF}), (\ref{eq_AF2}).
    We are going to use this inequality to find a constant $C_0 \geq 1$ (depending on $N$, $\C$, $\varepsilon_0$) such that the following holds: for all $x \in K$, for all $r > 0$ such that $B(x,r) \subset \Omega$ and $h(r) \leq \varepsilonA$, there exists $y \in K \cap B(x,r/2)$ and $t \in (C_0^{-1} r,r/2)$ such that $\omega_p(y,t) \leq \varepsilon_0/2$.
    Indeed, if this is not the case for a given constant $C_0 \geq 1$, then
    \begin{align*}
        C r^{N-1} \geq \int_{y \in K \cap B(x,r/2)} \! \int_0^{r/2} \! \omega_p(y,t) \frac{\dd{t}}{t} \dd{\HH^1(y)} &\geq \int_{y \in K \cap B(x,r/2)} \! \int_{C_0^{-1} r}^{r/2} \! \omega_p(y,t) \frac{\dd{t}}{t} \dd{\HH^1(y)}\\
                                                                                                                    &\geq \frac{\varepsilon_0}{2} \HH^{N-1}(K \cap B(x,r/2)) \ln\left(\frac{C_0}{2}\right)\\
                                                                                                                    &\geq C^{-1} \varepsilon_0 \ln(\frac{C_0}{2}) r^{N-1}.
    \end{align*}
    We reach a contradiction if $C_0$ is too big (depending on $N$, $\C$, $\varepsilon_0$).
    We conclude that for all $x \in K$, for all $r > 0$ such that $B(x,r) \subset \Omega$ and $h(r) \leq \varepsilonA$, there exists $y \in K \cap B(x,r/2)$ and $t \in (C_0^{-1}r,r/2)$ such that $\omega_p(y,t) \leq \varepsilon_0/2$.
    Assuming furthermore $h(r) \leq \min(\varepsilon_0/2, \varepsilonA)$, we can apply Proposition \ref{prop_omega} in $B(y,t)$ which yields
    \begin{equation*}
        \HH^{N-1}(K \cap B(y,t)) \geq \omega_{N-1}(1 - \varepsilon) r^{N-1},
    \end{equation*}
    and concludes the proof.
\end{proof}

We are going to deduce that the area is lower-semicontinuous along sequence of almost-minimizers.
\begin{corollary}[Lower semicontinuity of the area]\label{cor_semicontinuity_area}
    Let $(\Omega_i)_i$ and $\Omega$ be a sequence of open sets as in (\ref{eq_Omega}).
    Let $(u_i,K_i)_i$ be a sequence such that for all $i$, $(u_i,K_i)$ is an almost-minimizer with gauge $h_i$ in $\Omega_i$.
    If $(u_i,K_i)_i$ converges to a pair $(u,K)$ in $\Omega$ and
    \begin{equation*}
        \lim_{r \to 0} \limsup_{i \to +\infty} h_i(r) = 0,
    \end{equation*}
    then for all open set $V \subset \Omega$, we have
    \begin{equation*}
        \HH^{N-1}(K \cap V) \leq \liminf_{i \to +\infty} \HH^{N-1}(K_i \cap V).
    \end{equation*}
\end{corollary}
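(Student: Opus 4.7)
The plan is to deduce the corollary directly from the uniform concentration property of Theorem \ref{prop_uc} combined with the classical implication that uniform concentration yields lower-semicontinuity of the $(N-1)$-dimensional Hausdorff measure on open sets, as recalled in \eqref{eq_uc} (see for instance \cite[Theorem 35.4]{d} or \cite{MSBOOK}). Thus the whole task reduces to checking that $(K_i)_i$ is uniformly concentrated with respect to $K$ in the sense of the definition opening this section.

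Concretely, I would fix $x \in K$ and $\varepsilon \in (0,1)$ and let $\varepsilon_0, C_0$ denote the constants furnished by Theorem \ref{prop_uc} associated with this $\varepsilon$. Using the hypothesis
\begin{equation*}
    \lim_{r \to 0} \limsup_{i \to +\infty} h_i(r) = 0,
\end{equation*}
I would choose $r(x) > 0$ so small that $\overline{B}(x,2r(x)) \subset \Omega$ and $\limsup_i h_i(2r(x)) < \min(\varepsilon_0, \varepsilonA)$. For any $r \in (0,r(x)]$ and any $i$ large enough, one then has $\overline{B}(x,2r(x)) \subset \Omega_i$ and $h_i(r) < \varepsilon_0$; moreover, by local Hausdorff convergence of $K_i$ to $K$, one can select $x_i \in K_i$ with $\abs{x_i - x} < r/4$, so that $B(x_i,r/2) \subset \Omega_i \cap B(x,r)$.

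Next I would apply Theorem \ref{prop_uc} inside $B(x_i,r/2)$: it delivers $y_i \in K_i \cap B(x_i,r/4)$ and $\rho_i \in (C_0^{-1}r/2, r/4)$ satisfying
\begin{equation*}
    \HH^{N-1}(K_i \cap B(y_i,\rho_i)) \geq (1-\varepsilon)\,\omega_{N-1}\, \rho_i^{N-1}.
\end{equation*}
Since $B(y_i,\rho_i) \subset B(x_i,r/2) \subset \Omega \cap B(x,r)$ and $\rho_i \geq (2C_0)^{-1} r$, this furnishes uniform concentration of $(K_i)_i$ with respect to $K$ with constant $C(\varepsilon) := 2C_0$, and the standard implication \eqref{eq_uc} then yields the claimed inequality $\HH^{N-1}(K \cap V) \leq \liminf_i \HH^{N-1}(K_i \cap V)$ for every open $V \subset \Omega$.

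The main work has already been absorbed by Theorem \ref{prop_uc}, so no substantial obstacle arises here; the only subtlety is to propagate, uniformly in $i$ and on sufficiently small scales, the smallness of the gauges $h_i$ needed to invoke Theorem \ref{prop_uc}, which is precisely what the assumption $\lim_{r \to 0^+}\limsup_{i} h_i(r) = 0$ provides.
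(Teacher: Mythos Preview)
Your proposal is correct and follows essentially the same approach as the paper: verify uniform concentration via Theorem~\ref{prop_uc} by passing to nearby points $x_i\in K_i$ and applying the theorem in $B(x_i,r/2)$, then invoke the standard implication \cite[Theorem 35.4]{d}. The only cosmetic difference is that the paper first restricts to $V\subset\subset\Omega$, checks uniform concentration of $(K_i\cap V)_i$ with respect to $K\cap V$ in the ambient space $V$, applies \eqref{eq_uc} there, and then handles general open $V\subset\Omega$ by exhaustion; you instead establish uniform concentration once in $\Omega$ and appeal directly to the localized form of the lower-semicontinuity theorem, which is indeed how \cite[Theorem 35.4]{d} is stated.
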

\begin{proof}
    We first deal with the case where $V \subset \subset \Omega$, in particular $V \subset \Omega_i$ for $i$ big enough.
    We show that the sequence $(K_i \cap V)_i$ is uniformly concentrated with respect to $K \cap V$ in the ambient space $V$, see Definition \ref{defi_uc}.
    Let $\varepsilon \in (0,1)$ and let $\varepsilon_0 > 0$ and $C_0 \geq 1$ be the associated constant given by Theorem \ref{prop_uc}.
    Let $x \in E \cap V$ and let us fix a radius $r(x)$ such that $B(x,2r(x)) \subset V$ and $\limsup_{i \to +\infty} h_i(r(x)) < \varepsilon_0$. In particular, for $i$ big enough $V \subset \Omega_i$ and $h_i(r(x)) \leq \varepsilon_0$.
    For $0 < r \leq r(x)$ and for $i$ big enough, there exists $x_i \in K_i$ such that $\abs{x_i - x} \leq r/2$ and thus $B(x_i,r/2) \subset B(x,r) \subset V \subset \Omega_i$.
    Since $h_i(r/2) \leq \varepsilon_0$, Theorem \ref{prop_uc} applied in $B(x_i,r/2) \subset \Omega_i$ shows that there exists $y_i \in K_i \cap B(x_i,r/4)$ and $t_i \in (C_0^{-1}r,r/4)$ such that
    \begin{equation*}
        \HH^{N-1}(K_i \cap B(y_i,t_i)) \geq \omega_{N-1} (1 - \varepsilon) t_i^{N-1}.
    \end{equation*}
    We also clearly have $B(y_i,t_i) \subset B(x_i,r/2) \subset B(x,r) \cap V$.
    Definition \ref{defi_uc} is thus satisfied and by (\ref{eq_uc}), we deduce that
    \begin{equation*}
        \HH^{N-1}(K \cap V) \leq \liminf_{i \to +\infty} \HH^{N-1}(K_i \cap V).
    \end{equation*}
    For a general open set $V \subset \Omega$, we consider an exhaustion of $V$ by open sets $(V^n)_n$ such that $V^n \subset \subset V$. Thus, for all $n$,
    \begin{equation*}
        \HH^{N-1}(K \cap V^n)   \leq \liminf_{i \to +\infty} \HH^{N-1}(K_i \cap V^n) \leq \liminf_{i \to +\infty} \HH^{N-1}(K_i \cap V)
    \end{equation*}
    whence by letting $n \to +\infty$,
    \begin{equation*}
        \HH^{N-1}(K \cap V) \leq \liminf_{i \to +\infty} \HH^{N-1}(K_i \cap V),
    \end{equation*}
    as desired.
\end{proof}

We finally prove Theorem \ref{thm_limit}.
\begin{proof}[Proof of Theorem \ref{thm_limit}]
    We let $(u_i,K_i)_i$ be a sequence such that for all $i$, $(u_i,K_i)$ is a topological almost-minimizer with gauge in $\Omega_i$. We assume that $(u_i,K_i)_i$ converges to a pair $(u,K)$ in $\Omega$ and that
    \begin{equation*}
        \lim_{t \to 0} \limsup_{i \to +\infty} h_i(t) = 0.
    \end{equation*}
    This assumption implies by Lemma \ref{lem_uniformAF} that $K$ is coral and it will also allow us to apply Corollary~\ref{cor_semicontinuity_area}.
    We know by Proposition \ref{prop_weak_limit} that for all open ball $B(x,r) \subset \subset \Omega$ such that $h^+(r) < \varepsilonA$ and for all topological competitor $(v,L)$ of $(u,K)$ in $B(x,r)$, we have
    \begin{multline}\label{eq_limit_comparison0}
        \limsup_{i \to + \infty} \left(\int_{B(x,r) \setminus K_i} \CC{e(u_i)} \dd{x} + \HH^{N-1}(K_i \cap \overline{B}(x,r))\right) \\\leq \int_{B(x,r) \setminus L} \CC{e(v)} \dd{x} + \HH^{N-1}(L \cap \overline{B}(x,r)) + h^+(r) r^{N-1},
    \end{multline}
    where $h^+(r) := \lim_{t \to r^+} \limsup_{i \to +\infty} h_i(t)$.
    As the elastic energy and the area are lower semicontinuous by Lemma \ref{lem_semicontinuity_energy} and Corollary \ref{cor_semicontinuity_area}, we have in particular
    \begin{multline}\label{eq_limit_comparison}
        \int_{B(x,r) \setminus K} \CC{e(u)} \dd{x} + \HH^{N-1}(K \cap B(x,r)) \\\leq \int_{B(x,r) \setminus L} \CC{e(v)} \dd{x} + \HH^{N-1}(L \cap \overline{B}(x,r)) + h^+(r) r^{N-1}.
    \end{multline}
    Here, it is in fact possible to remove the closure $\overline{B}(x,r)$ at the right-hand side. Indeed, for all small $t > r$ such that $h^+(t) < \varepsilonA$, the pair $(v,L)$ is still a topological competitor of $(u,K)$ in $B(x,t)$ so the energy comparison (\ref{eq_limit_comparison}) holds when one replaces $B(x,r)$ by $B(x,t)$. Then one can let $t \to r^+$ and use the fact that $K \cap \partial B(x,r) = L \cap \partial B(x,r)$ to deduce the inequality with $B(x,r)$ on both sides (without closure).
    Here we have also used the fact that $h^+$ is right-continuous.
    It is clear that if we set $h(r) = h^+(r)$ when $h^+(r) < \varepsilon$ and $h(r) = +\infty$ otherwise, then $(u,K)$ is an almost-minimizer in $\Omega$ with gauge $h$.

    It is left to check that for all open ball $B(x,r) \subset \subset \Omega$, we have
    \begin{equation*}
        \limsup_{i \to +\infty} \int_{B(x,r) \setminus K_i} \CC{e(u_i)} \dd{x} \leq \int_{B(x,r) \setminus K} \CC{e(u)} \dd{x} + h(r) r^{N-1}
    \end{equation*}
    and
    \begin{equation*}
        \limsup_{i \to +\infty} \HH^{N-1}(K_i \cap \overline{B}(x,r)) \leq \HH^{N-1}(K \cap \overline{B}(x,r)) + h(r) r^{N-1}.
    \end{equation*}
    We can directly assume that $h^+(r) < \varepsilonA$ so that $h(r) = h^+(r)$.
    Observe that for $t > r$ slightly bigger than $r$ such that $\overline{B}(x,t) \subset \Omega$ and $h^+(t) < \varepsilon$, we have $B(x,t) \subset \Omega_i$ and $h_i(t) \leq \varepsilonA$ for $i$ big enough and this implies a uniform bound by (\ref{eq_AF2}),
    \begin{equation*}
        \int_{B(x,t) \setminus K_i} \abs{e(u_i)}^2 \dd{x} + \HH^{N-1}(K_i \cap B(x,t)) \leq C t^{N-1}.
    \end{equation*}
    This makes sure that we will always deal with finite quantities in the argument below.
    We apply (\ref{eq_limit_comparison0}) with $(u,K)$ being a competitor of itself in $B(x,r)$, and we obtain,
    \begin{multline}\label{eq_limsupinf}
        \limsup_{i \to +\infty} \left(\int_{B(x,r) \setminus K_i} \CC{e(u_i)} \dd{x} + \HH^{N-1}(K_i \cap \overline{B}(x,r))\right) \\\leq \int_{B(x,r) \setminus K} \CC{e(u)} \dd{x} + \HH^{N-1}(K \cap \overline{B}(x,r)) + h^+(r) r^{N-1}.
    \end{multline}
    We first deal with the limit superior of $\HH^{N-1}(K_i \cap \overline{B}(x,r))$. It follows from (\ref{eq_limsupinf}) that
    \begin{multline*}
        \left(\liminf_{i \to +\infty} \int_{B(x,r) \setminus K_i} \CC{e(u_i)} \dd{x}\right) + \left(\limsup_{i \to +\infty} \HH^{N-1}(K_i \cap \overline{B}(x,r))\right) \\\leq \int_{B(x,r) \setminus K} \CC{e(u)} \dd{x} + \HH^{N-1}(K \cap \overline{B}(x,r)) + h^+(r) r^{N-1},
    \end{multline*}
    and we know by lower semicontinuity of the energy that
    \begin{equation*}
        \liminf_{i \to +\infty} \int_{B(x,r) \setminus K_i} \CC{e(u_i)} \dd{x} \geq \int_{B(x,r) \setminus K} \CC{e(u)} \dd{x}
    \end{equation*}
    so we deduce that
    \begin{equation*}
        \limsup_{i \to +\infty} \HH^{N-1}(K_i \cap \overline{B}(x,r)) \leq \HH^{N-1}(K \cap \overline{B}(x,r)) + h^+(r) r^{N-1}.
    \end{equation*}
    We pass to the limit superior of $\int_{B(x,r) \setminus K_i} \CC{e(u_i)} \dd{x}$.
    It follows from (\ref{eq_limsupinf}) that
    \begin{multline*}
        \left(\limsup_{i \to +\infty} \int_{B(x,r) \setminus K_i} \CC{e(u_i)} \dd{x}\right) + \left(\liminf_{i \to +\infty} \HH^{N-1}(K_i \cap \overline{B}(x,r))\right) \\\leq \int_{B(x,r) \setminus K} \CC{e(u)} \dd{x} + \HH^{N-1}(K \cap \overline{B}(x,r)) + h^+(r) r^{N-1}.
    \end{multline*} 
    By an argument which is now usual, this also holds when one replaces $B(x,r)$ by balls $B(x,t)$ where $t$ is a radius slightly bigger than $r$ such that $\overline{B}(x,t) \subset \Omega$ and $h^+(t) < \varepsilonA$. We know by lower semicontinuity of the area that
    \begin{equation*}
        \liminf_{i \to + \infty} \HH^{N-1}(K_i \cap \overline{B}(x,t)) \geq \HH^{N-1}(K \cap B(x,t))
    \end{equation*}
    so we deduce that
    \begin{multline*}
        \left(\limsup_{i \to +\infty} \int_{B(x,t) \setminus K_i} \CC{e(u_i)} \dd{x}\right) + \HH^{N-1}(K \cap B(x,t)) \\\leq \int_{B(x,t) \setminus K} \CC{e(u)} \dd{x} + \HH^{N-1}(K \cap \overline{B}(x,t)) + h^+(t) t^{N-1}
    \end{multline*}
    and in particular
    \begin{multline*}
        \left(\limsup_{i \to +\infty} \int_{B(x,r) \setminus K_i} \CC{e(u_i)} \dd{x}\right) + \HH^{N-1}(K \cap \overline{B}(x,r)) \\\leq \int_{B(x,t) \setminus K} \CC{e(u)} \dd{x} + \HH^{N-1}(K \cap \overline{B}(x,t)) + h^+(t) t^{N-1}.
    \end{multline*}
    Then one can let $t \to r^+$ and use $K \cap \partial B(x,r) = L \cap \partial B(x,r)$ to deduce
    \begin{equation*}
        \limsup_{i \to + \infty} \int_{B(x,r) \setminus K_i} \CC{e(u_i)} \dd{x} \leq \int_{B(x,r) \setminus K} \CC{e(u)} \dd{x} + h^+(r) r^{N-1}. 
    \end{equation*}
\end{proof}

\section{Applications}

\subsection{Existence of blow-up limits}\label{section_blowup}

We adapt the notion of global minimizers introduced by {\sc Bonnet} \cite{Bonnet} from the Mumford-Shah to the Griffith setting.
\begin{definition}[Global minimizer]
    A Griffith global minimizer in $\R^N$ is a coral pair $(u,K)$ such that for all $x \in K$, for all $r > 0$ and for all topological competitor $(v,L)$ of $(u,K)$ in $B = B(x,r)$, we have
    \begin{equation*}
        \int_B \CC{e(u)} \dd{x} + \HH^{N-1}(K \cap B) \leq \int_B \CC{e(v)} \dd{x} + \HH^{N-1}(L \cap B).
    \end{equation*}
\end{definition}
This notion draws its importance from the fact that blow-up limits of Griffith minimizer are global minimizers.
We will justify this soon but let us first describe the known (or expected) global minimizers.
The first example of global minimizers are those for which $u$ is piecewise rigid. In this case, $K$ is a minimal set of codimension 1 in $\R^N$ and a partial classification is known.
In dimension $N = 2$, there are exactly two possibilities: a line or a triple junction (three half lines meeting with an angle $2\pi/3$). 
In dimension $N = 3$, there are exactly three possibilities: an hyperplane, a $\mathbb{Y}$ cone (three half-planes meeting with an angle $2\pi/3$) or a $\mathbb{T}$ cone (the cone over the edges of a regular tetrahedron). 
We refer to {\sc Taylor} \cite{ta} or {\sc David} \cite[Theorem 1.9]{David2009} for a proof.
As soon as $N \geq 4$, a few examples are known but not the full classification. There is for example the cone over the $(N-2)$-skeleton of a cube \cite{Brakke} and the cone over the $(N-2)$-skeleton of a regular simplex \cite{LawMor}.
What about the global minimizers for which $u$ is not piecewise rigid, we expect crack-tips in the plane ($K$ is a half-line) and crack-fronts in higher dimensions ($K$ is a half-hyperplane).
This was proved by {\sc David} and {\sc Bonnet} \cite{BonnetDavid} in the scalar case.
It is not known if there could be other kind of global minimizers.

Let us describe now the blow-up limit procedure.
Let $(u,K)$ be a topological almost-minimizer in $\Omega$ with a gauge $h$. We recall in this case that the gauge satisfies $\lim_{r \to 0} h(r) = 0$, see Definition \ref{defi_quasi}.
We fix $x_0 \in K$.
We consider a sequence of radii $(r_i)_i$ such that $r_i \to 0$ and for each $i$, we consider the pair $(u_i,K_i)_i$ defined by
\begin{equation*}
    u_i(x) = r_i^{-1/2} u(x_0 + r_i x) \quad \text{and} \quad K_i := r_i^{-1} (K - x_0)
\end{equation*}
in the domain $\Omega_i = r_i^{-1}(\Omega - x_0)$.
This is a topological almost-minimizer in $\Omega_i$ with gauge $h_i(t) = h(r_i t)$.
Since $\Omega$ is an open set, one can see that
\begin{equation*}
    \text{for all compact set $H \subset \R^N$, we have $H \subset \Omega_i$ for $i$ large enough.}
\end{equation*}
If the sequence $(u_i,K_i)_i$ converges to a pair $(u_\infty,K_\infty)$ in $\R^N$, then we call $(u_\infty,K_{\infty})$ a \emph{blow-up limit} of $(u,K)$ at $x_0$. 
One can see that the limit gauge is identically zero because $h_i(t) \to 0$ for all $t \geq 0$.
Therefore, a blow-up limit is a global minimizer in $\R^N$ by Theorem \ref{thm_limit}.

Note that we can always extract a subsequence such that the pairs $(u_i,K_i)_i$ above converge.
Indeed, for all $R > 0$ and for $i$ big enough such that $B(0,R) \subset \Omega_i$ and $h(r_i R) \leq \varepsilonA$, we have by Ahlfors-regularity (\ref{eq_AF2})
\begin{equation*}
    \int_{B(0,R)} \abs{e(u_i)}^2 \dd{x} = r_i^{1-N} \int_{B(x_0, r_i R)} \abs{e(u)}^2 \dd{x} \leq C R^{N-1}.
\end{equation*}
Therefore, we can apply Lemma \ref{lem_compactness} to extract a convergent subsequence.
This shows that every point has blow-up limits but it is not known whether there is uniqueness.

We observe that if $(u_{\infty},K_\infty)$ is a blow-up limit of $(u,K)$ at $x_0$, then for all $r > 0$, the rescaled pair $(v_{\infty},L_{\infty})$ defined by
\begin{equation}\label{eq_rescaled_blowup}
    v_{\infty}(x) = r^{-1/2} u_{\infty}(r x) \quad \text{and} \quad L_{\infty} = r^{-1} K_\infty
\end{equation}
is also a blow-up limit of $(u,K)$ at $x_0$. This is a direct application of Remark \ref{rmk_limit_scaling}.
If $K$ has a unique blow-up limit at $x_0$, it must therefore be a cone centered at $0$.
In Proposition~\ref{prop_classification} we will classify the possible global minimizers whose crack set is a cone when $N = 2$.

\begin{proposition}\label{prop_blowup}
    Let $(u,K)$ be a topological almost-minimizer in $\Omega$ with gauge $h$. Let $x_0 \in K$ and let a sequence $(r_i)_i$ such that $r_i \to 0$ and such that the pairs
    \begin{equation*}
        u_i(x) = r_i^{-1/2} u(x_0 + r_i x) \quad \text{and} \quad K_i := r_i^{-1} (K - x_0)
    \end{equation*}
    converge to a pair $(u_{\infty},K_{\infty})$ in $\R^N$. Then $(u_{\infty},K_{\infty})$ is a global minimizer in $\R^N$.
    Moreover, for all open ball $B = B(y,t) \subset \R^N$, we have
    \begin{equation}\label{eq_blowup_energy}
        \int_{B(y,t)} \CC{e(u_{\infty})} \dd{x} = \lim_{i \to +\infty} r_i^{1-N} \int_{B(x_0 + r_i y, r_i t)} \CC{e(u)} \dd{x}
    \end{equation}
    and
    \begin{align*}
        \HH^{N-1}(K_{\infty} \cap B(y,t)) &\leq \liminf_{i \to +\infty} r_i^{1-N} \HH^{N-1}(K_i \cap B(x_0 + r_i y, r_i t))\\
        \HH^{N-1}(K_{\infty} \cap \overline{B}(y,t)) &\geq \limsup_{i \to +\infty} r_i^{1-N} \HH^{N-1}(K_i \cap B(x_0 + r_i y, r_i t)).
    \end{align*}
    If in addition $\lim_{r \to 0} \omega_2(x_0,r) = 0$, then $u$ is piecewise rigid and $K$ is a minimal set in $\R^N$.
\end{proposition}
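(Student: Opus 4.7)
The plan is to reduce everything to a direct application of Theorem \ref{thm_limit} combined with the standard rescaling identities. First I would verify the hypotheses: by Remark \ref{rmk_scaling}, each rescaled pair $(u_i,K_i)$ is a topological almost-minimizer in $\Omega_i = r_i^{-1}(\Omega - x_0)$ with gauge $h_i(t) = h(r_i t)$, and since $x_0$ is an interior point of $\Omega$, the domains $\Omega_i$ exhaust $\R^N$ in the sense of (\ref{eq_Omega}). Because $(u,K)$ is an almost-minimizer, Definition \ref{defi_quasi} gives $\lim_{s \to 0^+} h(s) = 0$, so for every fixed $t > 0$,
\[
\limsup_{i \to +\infty} h_i(t) = \lim_{i \to +\infty} h(r_i t) = 0.
\]
Thus the limit gauge produced by Theorem \ref{thm_limit} is identically zero on $(0,+\infty)$, and the compatibility condition $\lim_{t \to 0^+} \limsup_i h_i(t) = 0$ is automatic. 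Theorem \ref{thm_limit} then immediately yields that $(u_\infty, K_\infty)$ is a topological almost-minimizer with zero gauge in $\R^N$, which by definition is a global Griffith minimizer.

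Next I would pull back the energy and area statements through the rescaling. From $u_i(x) = r_i^{-1/2} u(x_0 + r_i x)$ one computes $e(u_i)(x) = r_i^{1/2} e(u)(x_0 + r_i x)$, and the change of variables gives
\[
\int_{B(y,t) \setminus K_i} \CC{e(u_i)} \dd{x} = r_i^{1-N} \int_{B(x_0 + r_i y,\, r_i t) \setminus K} \CC{e(u)} \dd{x},
\]
and similarly $\HH^{N-1}(K_i \cap B(y,t)) = r_i^{1-N} \HH^{N-1}(K \cap B(x_0 + r_i y, r_i t))$, with the same identity for the closed ball. Since the limit gauge vanishes, the matching $\liminf$ and $\limsup$ inequalities in Theorem \ref{thm_limit} for the elastic energy collapse to the announced equality (\ref{eq_blowup_energy}); the two area bounds on $B(y,t)$ and $\overline{B}(y,t)$ are the direct translation of the corresponding bounds in Theorem \ref{thm_limit}.

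For the last clause, I would use the assumption $\lim_{r \to 0^+} \omega_2(x_0, r) = 0$ to verify the additional hypothesis of Theorem \ref{thm_limit}. For any compact $H \subset \R^N$, choose $R$ with $H \subset B(0,R)$; for $i$ large we have $B(0,R) \subset \Omega_i$ and the same rescaling yields
\[
\int_H \abs{e(u_i)}^2 \dd{x} \leq r_i^{1-N} \int_{B(x_0, r_i R)} \abs{e(u)}^2 \dd{x} = R^{N-1} \omega_2(x_0, r_i R),
\]
which tends to $0$. By Cauchy--Schwarz $\int_H \abs{e(u_i)} \dd{x} \to 0$ as well, so Theorem \ref{thm_limit} concludes that $u_\infty$ is a rigid motion on each connected component of $\R^N \setminus K_\infty$ and that $K_\infty$ is an almost-minimal set with zero gauge, i.e.\ a minimal set. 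No substantial new obstacle arises; the only delicate point is noting that the local-uniform vanishing of the rescaled gauges $h_i$ is exactly what forces the elastic-energy limit to be an equality rather than a one-sided inequality.
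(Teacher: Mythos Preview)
Your proof is correct and takes exactly the same approach as the paper, which simply states ``This is a direct application of Theorem \ref{thm_limit}.'' You have spelled out in full the verification of the hypotheses (exhaustion of $\R^N$ by the $\Omega_i$, vanishing of the rescaled gauges, the rescaling identities for energy and area, and the Cauchy--Schwarz step for the final clause), all of which are routine and implicitly assumed in the paper's one-line proof.
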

\begin{proof}
    This is a direct application of Theorem \ref{thm_limit}.
\end{proof}

We finally investigate the possible global minimizers $(u,K)$ in the plane when $K$ is a cone.
In the setting of Mumford-Shah global minimizers, a similar classification is due to Bonnet \cite{Bonnet} under the more general assumption that $K$ is connected. Such a result is not yet available for Griffith due to the lack of analogue of the Bonnet monotonicity formula.

\begin{proposition}\label{prop_classification}
    Assume that $\C e =\lambda tr(e) Id + 2\mu e$ where $\lambda$ and $\mu$ are the Lam\'e coefficients satisfying $\mu > 0$ and $\mu + \lambda > 0$. Let $(u,K)$ be a global minimizer in $\R^2$ and assume that $K$ is a cone centered centered at $0$. Then, either $K$ is empty, a line, a triple junction or a half-line (crack-tip).
\end{proposition}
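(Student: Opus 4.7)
The plan is to combine Ahlfors regularity of $K$ with a Liouville-type theorem on each wedge of $\R^2 \setminus K$. Applying \eqref{eq_AF2} with $h \equiv 0$ yields $\HH^1(K \cap B(0,R)) \leq CR$ for every $R > 0$; combined with $K$ being a closed cone centered at $0$, this forces $K = \bigcup_{j=1}^n \R_+ \theta_j$ for some integer $n \geq 0$ and distinct unit directions $\theta_j \in \mathbf{S}^1$ (infinitely many such directions would contradict the linear-growth bound). The cases $n \in \{0,1\}$ correspond to the empty set and the crack-tip half-line, both already on the announced list, so it remains to handle $n \geq 2$; in that regime, every connected component $W_j$ of $\R^2 \setminus K$ is an infinite planar sector of opening $\alpha_j \in (0, 2\pi)$ with $\sum_j \alpha_j = 2\pi$. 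The goal will be to show $e(u) \equiv 0$ on $\R^2 \setminus K$ and then invoke the classification of one-dimensional topological minimal cones in $\R^2$ (\cite{ta}, or \cite[Theorem 1.9]{d3}).

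The core step is to prove that, for $n \geq 2$, the displacement $u$ is a rigid motion on each wedge $W_j$. Taking smooth compactly supported variations of $u$ inside $W_j$ gives, from global minimality, the isotropic Lam\'e system $\mathrm{div}(\C e(u)) = 0$ in $W_j$, while variations supported near a regular point of $\partial W_j$ yield the traction-free boundary condition $\C e(u) \cdot \nu = 0$ on $\partial W_j$. Decomposing $u$ into Williams modes $\rho^\gamma F_\gamma(\theta)$, each non-rigid mode contributes $\int_{B_R \cap W_j} |e(u)|^2 \, dx \asymp R^{2\gamma}$ at infinity, so the Ahlfors estimate forces $\gamma \leq 1/2$. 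The classical spectral analysis of Williams for plane isotropic elasticity on a sector of opening $\alpha < 2\pi$ with free boundary conditions---passing through the Airy stress function $\Phi = \rho^{\gamma+1} F(\theta)$ satisfying the biharmonic equation with homogeneous boundary data---establishes that the smallest $\gamma > 0$ for which a non-rigid mode exists is strictly greater than $1/2$, the value $1/2$ being attained only at $\alpha = 2\pi$. Hence every surviving Williams mode on $W_j$ is rigid, so $e(u) \equiv 0$ on $W_j$.

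Once $e(u) \equiv 0$, for any topological competitor $(v, L)$ of $(u, K)$ in a ball $B$, I would choose $v$ piecewise rigid on $B \setminus L$ as follows: to each connected component $V$ of $B \setminus L$ meeting $\partial B$, the topological-competitor condition forces $V$ to abut a unique wedge $W_j$ outside $B$ (two distinct abutting wedges would give a path in $\R^2 \setminus L$ between points that $K$ separates), and I set $v$ equal to the corresponding wedge rigid motion on $V$, with $v = 0$ on interior components. Then $e(v) \equiv 0$ and the minimality inequality reduces to $\HH^1(K \cap B) \leq \HH^1(L \cap B)$ for every topological competitor $L$; this is exactly the statement that $K$ is a topologically minimal set in the sense of Definition~\ref{defi_minimalset}. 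Invoking \cite[Theorem 1.9]{d3} then forces $n = 2$ with $\theta_1 + \theta_2 = 0$ (so $K$ is a line) or $n = 3$ with $\theta_1 + \theta_2 + \theta_3 = 0$ (so $K$ is a triple junction), and excludes $n \geq 4$, completing the classification.

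The main obstacle is the Williams spectral step: proving that the first non-rigid exponent for the plane isotropic Lam\'e system with coefficients $\lambda, \mu > 0$ on a sector of angle $\alpha < 2\pi$ with traction-free boundary is strictly greater than $1/2$. This is classical in the elasticity literature (Williams, 1952) and reduces to a pair of transcendental characteristic equations (symmetric and antisymmetric modes) whose smallest positive roots coincide with $1/2$ precisely at $\alpha = 2\pi$. The Lam\'e-coefficient dependence enters only through the Kolosov constant and can be extracted from a short direct computation with the Airy stress function; it is the one non-geometric PDE input of the proof, and the place where the isotropy hypothesis is genuinely used. Every other ingredient is either a generality about global minimizers already developed in the paper or a classical result about minimal cones in the plane.
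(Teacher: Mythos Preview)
Your approach is correct and reaches the same conclusion by a genuinely different route. Both arguments rest on the same PDE fact---that on a sector of opening $\alpha < 2\pi$ with traction-free boundary, the Lam\'e operator admits no non-rigid homogeneous solution of degree $\leq 1/2$---but deploy it in opposite directions. You invoke it through a Williams expansion and a Liouville argument at infinity, concluding directly that $e(u) \equiv 0$ on each wedge; the paper instead cites the corresponding decay estimate at the origin (from \cite{epitaxially}) and then performs a blow-up at $0$ using Theorem~\ref{thm_limit}: since $K$ is a cone the blow-up crack set is $K$ itself, and the decay forces $e(u_\infty)=0$, whence $K$ is minimal. Your route is more direct and yields the stronger conclusion $e(u)\equiv 0$ for the original displacement, while the paper's route is a deliberate illustration of its main limiting machinery. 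One caveat: your Williams-mode step is a touch heuristic as written---the mode decomposition on the full infinite sector and the implicit independence that prevents energy cancellation between modes would need justification, or could be replaced by rescaling the near-origin decay estimate to obtain the requisite far-field rigidity. Your final construction of a piecewise-rigid $v$ adapted to a topological competitor $L$ is essentially the argument in Step~4 of Proposition~\ref{prop_weak_limit}.
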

\begin{proof}

    Let $(u,K)$ be a global minimizer in $\R^2$ and assume that $K$ is a cone centered at $0$.
    Since the density of $K$ at $0$ is bounded by Ahlfors-regularity, this cone can only be composed of a finite number of half-lines. We can directly assume that $K$ is composed of at least two half lines, as the other cases are already described in the conclusion of the proposition.
    Thus, $\R^2 \setminus K$ is composed of infinite angular sectors with aperture in $(0,2\pi)$.
    It is standard that by taking outer variations, one obtains that $u$ is a weak solution of the Lam\'e system: denoting the strain by  $\sigma :=  \C e(u) =\lambda {\rm tr}(e(u)) Id + 2\mu e(u)$, then in each connected component $\Omega$ of $\R^2\setminus K$, we have in a weak sense
    $$
    \left\{
        \begin{array}{cc}
            {\rm div}(\sigma) =0 &\text{ in }\Omega \\ 
            \sigma\cdot \nu = 0 & \text{ on } \partial \Omega.
        \end{array}
    \right.
    $$
    In each angular domain of $\R^2 \setminus K$, we invoke the regularity theory for the Lamé system in polygonal domains.
    More precisely we shall use  \cite[Theorem 3.11 (Decay Estimate I)]{epitaxially})  and deduce that there exists $C_0,\alpha>0$ for which the following decay property holds:
    \begin{equation}\label{eq_decay0}
        \int_{B(0,r) \setminus K} \abs{\nabla u}^2 \dd{x} \leq C_0 r^{1+\alpha} \int_{B(0,1) \setminus K}\left( |u|^2 + \abs{\nabla u}^2 \right) \dd{x}, \quad \forall 0 < r < 1.
    \end{equation}
    Note that from the definition of pairs, we just have $u \in W^{1,2}_{\rm loc}(\R^2 \setminus K;\R^2)$ but since each connected component of $B(0,1) \setminus K$ is a Lipschitz domain, the Korn-Poincaré inequality shows that we actually have $u \in W^{1,2}(B(0,1) \setminus K;\R^2)$. Therefore the constant
    \begin{equation*}
        C_1 := C_0 \int_{B(0,1)}\left( |u|^2 + \abs{\nabla u}^2 \right) \dd{x}
    \end{equation*}
    is finite and we can reformulate (\ref{eq_decay0}) as
    \begin{equation}\label{eq_decay}
        \int_{B(0,r) \setminus K} \abs{\nabla u}^2 \dd{x} \leq C_1 r^{1+\alpha}, \quad \forall \ 0 < r < 1.
    \end{equation}
    Now we proceed to a blow-up procedure: from the pair $(u,K)$ we define $(u_n,K_n)_n$ as being the blow-up sequence 
    $$K_n:=\frac{1}{r_n}K \quad \quad \text{and} \quad u_n(x)=r_n^{-1/2}u(r_n x),$$
    with $r_n=1/n \to 0$. We can extract a subsequence which converges to a pair $(u_{\infty},K_{\infty})$ and from Proposition \ref{prop_blowup}, we know that $(u_\infty,K_\infty)$ is still a global minimizer in the plane. Of course since $K$ is assumed to be a cone, it holds $K_\infty=K$. Now we want to prove that $e(u_\infty)=0$. For that purpose, we apply (\ref{eq_decay}) and \eqref{eq_blowup_energy} from Proposition~\ref{prop_blowup} to deduce that for any given $a>0$,
    $$\int_{B(0,a) \setminus K} |e(u_\infty)|^2 \;dx =\lim_{n \to +\infty} \frac{1}{r_n}\int_{B(0,r_n a) \setminus K} |e(u)|^2 \dd x \leq C_1 a^{1+\alpha}  \lim_{n\to +\infty}r_n^{\alpha} =0,$$
    thus $e(u_\infty)=0$ in $B(0,a)$.
    Since $a>0$ is arbitrary, this shows that $e(u_\infty)=0$ everywhere in $\R^2 \setminus K$. But then $(u_\infty, K_\infty)$ is a global minimizer with $e(u_{\infty})=0$, so $K_\infty$ is a minimal set in $\R^2$. In virtue of \cite[Theorem 10.1]{David2009}, we conclude that $K$ must be a line or a triple junction.
\end{proof}

\subsection{Equivalent definitions of the singular part}

Let $(u,K)$ be a topological almost-minimizer with gauge $h$ in $\Omega$.
We define the \emph{regular part} of $K$ as the set of points $x \in K$ for which there exists a sequence $(r_i)_i$ going to $0$ and an hyperplane $P$ passing through $x$ such that
\begin{equation*}
    \lim_{i \to +\infty} r_i^{-1} \left(\sup_{y \in P \cap B(x,r_i)} \mathrm{dist}(y,K) + \sup_{y \in K \cap B(x,r_i)} \mathrm{dist}(y,P)\right) = 0.
\end{equation*}
We define the \emph{singular part} of as the set of non-regular points of $K$, denoted by the symbol $\Sigma(K)$.

If the gauge $h$ is decaying as power, we expect that regular points are equivalently characterized as points $x \in K$ in the neighborhood of which $K$ is a smooth hypersurface. Our definition of ``regular points" is therefore quite weak but it is motivated by its application in Proposition \ref{prop_dimension}, where the dimension of $\Sigma(K)$ will be controlled by the integrability exponent of $e(u)$. This control was first observed for Mumford-Shah minimizers by {\sc Ambrosio}, {\sc Fusco} and {\sc Hutchinson} \cite{AFH}. As we lack $\varepsilon$-regularity theorems providing an equivalence between all reasonable definitions of the regular part, we need to choose a large definition of ``regular points" in order to adapt their result.


We now investigate different equivalent characterization of regular points.
One can already see that regular points are characterized by the condition
\begin{equation*}
    \liminf_{r \to 0} \beta^{\rm bil}(x,r) = 0,
\end{equation*}
where the bilateral flatness $\beta^{\rm bil}$ is defined in (\ref{eq_bilateral_flatness}).
The goal of the rest of this section is to justify that they are also characterized by the condition
\begin{equation*}
    \liminf_{r \to 0} \beta(x,r) + \omega(x,r) = 0.
\end{equation*}

We show first that the bilateral flatness is controlled by the flatness and the normalized elastic energy.
\begin{proposition}\label{prop_regular0}
    There exists a constant $\varepsilon_0 > 0$ (which depends only on $N$ and $\C$) such that the following holds.
    Let $(u,K)$ be a topological almost-minimizer with gauge $h$ in an open set $\Omega$. Then for all $x_0 \in K$ and $r_0 > 0$ such that $B(x_0,r_0) \subset \Omega$ and $h(r_0) \leq \varepsilon_0$, we have
    \begin{equation*}
        \beta^{\rm bil}(x_0,r_0/2) \leq C \left(\beta(x_0,r_0) + \omega_2(x_0,r_0)^{1/(6m)}\right),
    \end{equation*}
    where $m = N-1$ and $C \geq 1$ is some constant which depends only on $N$ and $\mathbb{C}$.
\end{proposition}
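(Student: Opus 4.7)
The plan is to bound the distance from a point of the approximate plane to $K$ by combining Lemma \ref{lem_Jinit}, which provides a positive jump $J$ as soon as $K$ fails to separate topologically, with the slicing estimate of Lemma \ref{lem_slicing}, which bounds the size of ``holes'' in the projection of $K$. By the standard rescaling of Remark \ref{rmk_scaling}, I reduce to the case $B(x_0, r_0) = B(0, 1)$ with $P_0 = \{x_N = 0\}$ and $\nu_0 = e_N$; write $\beta_0 := \beta(0, 1)$ and $\omega := \omega_2(0, 1)$. Given an arbitrary $y \in P_0 \cap B(0, 1/2)$, let $\rho := \mathrm{dist}(y, K)$; the aim is to show $\rho \leq C(\beta_0 + \omega^{1/(6m)})$. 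If $\rho \leq 2\beta_0$ there is nothing to prove, so suppose $\rho > 2\beta_0$.

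In this regime, the ball $B(y, \rho)$ escapes the flat slab $\{|x_N| \leq \beta_0\}$ containing $K \cap B(0, 1)$ and can be used as a bridge between the upper and lower halves of $B(0, 1)$. Concretely, concatenating a path from $D_1$ through the upper half $\{x_N > \beta_0\} \cap B(0, 1)$ to $y + (\rho/2) \nu_0$, across $B(y, \rho) \subset B(0, 1) \setminus K$ to $y - (\rho/2) \nu_0$, and through the lower half to $D_2$ produces a path in $B(0, 1) \setminus K$ joining $D_1$ to $D_2$. Hence $K$ does not separate $D_1$ from $D_2$, and under the smallness assumption $\beta_0 + \omega_p + h(r_0) \leq \tau_0$, Lemma \ref{lem_Jinit} yields $J(0, 1) \geq \tau_0$.

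Next I follow the averaging argument from the proof of Proposition \ref{prop_mainHole}: Lemma \ref{lem_int_rigid2} supplies a direction $\nu \in \mathbb{S}^{N-1}$ with $|\nu - \nu_0| \leq \beta_0$ such that $J(\nu) \geq C(\beta_0)^{-1} \tau_0$. Because $\nu$ is close to $\nu_0$ and $\rho \gg \beta_0$, the same tube computation as in the proof of Lemma \ref{lem_slicing} shows that $\pi_{P_\nu}(K \cap B(0, 1))$ misses a disc of radius comparable to $\rho$ around $\pi_{P_\nu}(y)$ in $P_\nu = \nu^\perp$. Feeding this hole size into Lemma \ref{lem_slicing} gives
\[
    J(\nu)\, (\rho/r_0)^{2(N-1)} \leq C \beta_0^{-1} \omega_1(0, 1)^{1/2},
\]
and combining with the lower bound on $J(\nu)$ together with the elementary bound $\omega_1 \leq \omega$ coming from Hölder inequality produces an estimate of the form $\rho/r_0 \leq C \beta_0^{-\gamma} \omega^{1/(4m)}$ for some $\gamma$ depending only on $N$.

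The main obstacle is then to dispose of the negative power of $\beta_0$. The idea is to reinject the standing constraint $\rho > 2\beta_0$: this forces $\beta_0^{1+\gamma} \leq C \omega^{1/(4m)}$, and substituting the resulting upper bound on $\beta_0$ back into $\rho \leq C \beta_0^{-\gamma} \omega^{1/(4m)}$ eliminates $\beta_0$ and yields a pure power $\rho \leq C \omega^{\kappa}$ for some $\kappa$ depending only on $N$. The specific form of the averaging constant $C(\beta_0)$ from Lemma \ref{lem_int_rigid2} is tuned so that $\kappa \geq 1/(6m)$, which closes the case $\rho > 2\beta_0$ and concludes the bound $\beta^{\rm bil}(x_0, r_0/2) \leq C(\beta(x_0, r_0) + \omega_2(x_0, r_0)^{1/(6m)})$.
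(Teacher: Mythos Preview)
Your overall plan (use a hole of radius $\rho>2\beta_0$ to force non-separation, invoke Lemma~\ref{lem_Jinit} to get $J(0,1)\geq\tau_0$, then feed a good direction $\nu$ into Lemma~\ref{lem_slicing}) is the same as the paper's. The gap is in the final ``reinjection'' step. From $\rho>2\beta_0$ and $\rho\leq C\beta_0^{-\gamma}\omega^{1/(4m)}$ you correctly deduce $\beta_0^{1+\gamma}\leq C\omega^{1/(4m)}$, but this is an \emph{upper} bound on $\beta_0$, and substituting it into $\beta_0^{-\gamma}$ only yields a \emph{lower} bound for the right-hand side. Concretely, if $\beta_0$ is extremely small (say $\beta_0=\omega$) then both constraints are satisfied with $\rho$ of order $1$, so no bound of the form $\rho\leq C\omega^\kappa$ can follow from those two inequalities alone. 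The problem originates one step earlier: by fixing the angular averaging radius in Lemma~\ref{lem_int_rigid2} equal to $\beta_0$, you create a negative power of $\beta_0$ that cannot be absorbed.

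The paper's proof avoids this by keeping the angular radius as a \emph{free} parameter $\varepsilon\in(\beta_0,1/8)$. It applies Lemma~\ref{lem_slicing} with the fixed value $1/8$ (so no $\beta_0^{-1}$ appears there) and Lemma~\ref{lem_int_rigid2} over the cap of radius $\varepsilon$, obtaining $J(\nu)\geq C^{-1}\varepsilon^{N-1}\tau_0$ and hence, for the distance $t$ from a point of $P$ to $K$, the dichotomy
\[
t\leq C\max\bigl(\varepsilon,\ \varepsilon^{-1/2}\omega^{1/(4m)}\bigr).
\]
One then optimizes in $\varepsilon$: if $\omega^{1/(6m)}\geq\beta_0$ take $\varepsilon=\omega^{1/(6m)}$ (the two terms balance), and otherwise take $\varepsilon=\beta_0$ (the first term dominates). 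In both cases $t\leq C(\beta_0+\omega^{1/(6m)})$. Your argument becomes correct as soon as you replace the fixed choice $\varepsilon=\beta_0$ by this free parameter and optimize at the end instead of trying to reinject.
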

\begin{proof}
    The letter $C\geq 1$ denotes a generic constant   which depends only on $N$ and $\C$.
    As usual, we assume that $B(x_0,r_0) = B(0,1)$.
    We let $\tau_0 \in (0,1/8)$ denote the constant of Lemma \ref{lem_Jinit} for $p = 2$ and $M = 1$ (it depends only on $N$ and $\C$).
    If $\beta(0,1) \geq 1/8$, the inequality holds trivially because we always have $\beta^{\rm bil}(0,1/2) \leq 1$.
    Otherwise, we let $P_0$ denote an hyperplane which achieves the infimum in the definition of $\beta(0,1)$ and we choose a unit normal $\nu_0$ to $P_0$. We let $a_1$, $a_2$ be defined as in the beginning of Section \ref{section_jump}
    Then we apply Lemma \ref{lem_slicing} with $\varepsilon = 1/8$ and this shows that for all unit vector $\nu \in \mathbf{S}^{N-1}$ such that $\abs{\nu - \nu_0} \leq 1/8$, we have
    \begin{equation*}
        J(\nu) \HH^{N-1}\left(S(\nu)\right)^2 \leq C \omega_1(0,1)^{1/2},
    \end{equation*}
    where $S(\nu)$ is the set of holes through slicing $$S(\nu) := P \cap B(0,1/2) \setminus \pi_P(K \cap B(0,1)),$$ $P$ is the hyperplane $x_0 + \nu^\perp$, $\pi_P$ is the orthogonal projection onto $P$, and
    $$J(\nu) := \abs{b \cdot \nu} + \abs{A \nu},$$
    where $b := b_1 - b_2$, $A := A_1 - A_2$. For any parameter $\varepsilon \in (\beta(0,1),1/8)$, Lemma \ref{lem_int_rigid2} shows that
    \begin{equation*}
        \int_{\nu \in \mathbf{S}^{N-1} \cap B(\nu_0,\varepsilon)} \abs{b \cdot x} + \abs{A x} \dd{\HH^{N-1}(x)} \geq C^{-1} \varepsilon^{N-1} \left(\abs{b} + |A|\right).
    \end{equation*}
    This allows to find a vector $\nu \in \mathbf{S}^{N-1}$ such that $\abs{\nu - \nu_0} \leq \varepsilon$ and $J(\nu) \geq C^{-1} \varepsilon^{N-1} J(0,1)$, and thus
    \begin{equation}\label{eq_Pcontrol}
        \HH^{N-1}\left(S(\nu)\right)^2 \leq C \varepsilon^{1-N} \omega_2(0,1)^{1/2}.
    \end{equation}
    Note that since
    \begin{equation*}
        K \cap B(0,1) \subset \set{\abs{x \cdot \nu_0} \leq \beta(0,1)}
    \end{equation*}
    and $\beta(0,1) \leq \varepsilon$ as well as $\abs{\nu - \nu_0} \leq \varepsilon$, we have
    \begin{equation}\label{eq_bil1}
        K \cap B(0,1) \subset \set{\abs{x \cdot \nu} \leq 2 \varepsilon}
    \end{equation}
    so the points of $K \cap B(0,1)$ are at distance $\leq 2 \varepsilon$ from $P$.
    Then we use (\ref{eq_Pcontrol}) to evaluate how far are the points of $P \cap B(0,1/2)$ from $K$.
    For $x \in P \cap B(0,1/2)$, we are going to prove that
    \begin{equation}\label{eq_bil2}
        \mathrm{dist}(x,K) \leq \max \left(4\varepsilon, C\varepsilon^{-1/2} \omega_2(0,1)^{1/(4m)}\right),
    \end{equation}
    where $m = N-1$.
    For this we consider a radius $t > 0$ such that $B(x,t) \cap K \ne \emptyset$. We want to bound $t$ from above by the right-hand side of (\ref{eq_bil2}) and for this we can directly consider the case where $t > 4\varepsilon$.
    We see that $K \cap B(0,1) \subset \set{\abs{x \cdot \nu} \leq t/2}$ so it is not possible for $P \cap B(x,t/2)$ to contain a point of $\pi_P(K \cap B(0,1))$ and therefore by (\ref{eq_Pcontrol}),
    \begin{equation*}
        \HH^{N-1}\left(P \cap B(0,1/2) \cap B(x,t/2)\right)^2 \leq C \varepsilon^{1-N} \omega_2(0,1)^{1/2}.
    \end{equation*}
    On the other hand, since $0 \in K$ and $B(x,t) \cap K = \emptyset$, we have at most $t \leq 1/2$ so we can bound from below
    \begin{equation*}
        \HH^{N-1}\left(P \cap B(0,1/2) \cap B(x,t/2)\right) \geq C^{-1} t^{N-1}.
    \end{equation*}
    This proves our claim. In view of (\ref{eq_bil1}) and (\ref{eq_bil2}), we conclude that for all $\varepsilon \in (\beta(0,1),1/8)$, we have
    \begin{equation*}
        \beta^{\rm bil}(0,1/2) \leq C \max\left(\varepsilon, \varepsilon^{-1/2} \omega_2(0,1)^{1/(4m)}\right).
    \end{equation*}
    If $\beta(0,1) \leq \omega^{1/(6m)}$, we take $\varepsilon = \omega^{1/(6m)}$ and otherwise we take $\varepsilon = \beta(0,1)$.
    In both case, this shows that
    \begin{equation*}
        \beta^{\rm bil}(0,1/2) \leq C \left(\beta(0,1) + \omega_2(0,1)^{1/(6m)}\right),
    \end{equation*}
    as desired.\end{proof}

Reciprocally, a blow-up type argument shows that the bilateral flatness controls the normalized elastic energy.
\begin{proposition}\label{prop_regular1}
    For all $\varepsilon > 0$, there exists $\varepsilon_0 > 0$ and $\gamma \in (0,1)$ (depending on $N$, $\C$ and $\varepsilon$) such that the following holds.
    Let $(u,K)$ be a topological almost-minimizer with gauge $h$ in an open set $\Omega$. If $x_0 \in K$ and $r_0 > 0$ are such that $B(x_0,r_0) \subset \Omega$ and
    \begin{equation*}
        \beta^{\rm bil}(x_0,r_0) + h(r_0) \leq \varepsilon_0,
    \end{equation*}
    then
    \begin{equation*}
        \beta(x_0, \gamma r_0) + \omega(x_0, \gamma r_0) \leq \varepsilon.
    \end{equation*}
\end{proposition}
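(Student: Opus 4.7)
The plan is to argue by contradiction and compactness, reducing via Theorem~\ref{thm_limit} to the study of a flat limit $(u,P_0)$, and then using elliptic regularity for the Lam\'e system to control the normalized energy of the limit at small scales.

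Suppose the statement fails for some $\varepsilon>0$. The parameter $\gamma\in(0,1)$ will be fixed at the end, depending only on $N$, $\C$ and $\varepsilon$. Taking $\varepsilon_0=1/n$, we obtain topological almost-minimizers $(u_n,K_n)$ with gauges $h_n$ in open sets $\Omega_n$, together with $x_n\in K_n$ and $r_n>0$ satisfying $B(x_n,r_n)\subset\Omega_n$, $\beta^{\rm bil}_{K_n}(x_n,r_n)+h_n(r_n)\leq 1/n$, yet $\beta_{K_n}(x_n,\gamma r_n)+\omega(x_n,\gamma r_n)>\varepsilon$. After the rescaling of Remark~\ref{rmk_scaling}, we may assume $x_n=0$ and $r_n=1$, so that $h_n(t)\leq 1/n$ for every $t\in(0,1]$. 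By the Ahlfors-regularity bound (\ref{eq_AF2}) and Lemma~\ref{lem_compactness}, up to a subsequence $(u_n,K_n)$ converges to a pair $(u,K)$ in $B(0,1)$. The bilateral flatness hypothesis forces $K_n\to P_0\cap B(0,1)$ in local Hausdorff distance for some hyperplane $P_0$ through $0$, so $K=P_0\cap B(0,1)$. Since $\limsup_n h_n(t)=0$ for every $t\leq 1$, Theorem~\ref{thm_limit} with vanishing limit gauge yields that $(u,P_0)$ is a topological minimizer in $B(0,1)$ and that the elastic energies converge on every $B(0,\rho)$ with $\rho<1$.

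The key step is to show that the limit has small normalized energy at small scales. Since pairs of the form $(v,P_0)$ (with $v=u$ near $\partial B(0,1)$) are automatically topological competitors of $(u,P_0)$, the minimality of $(u,P_0)$ forces $u$ to minimize $v\mapsto\int\CC{e(v)}\dd{x}$ on each of the two half-balls making up $B(0,1)\setminus P_0$. Equivalently, $u$ is a weak solution of $\mathrm{div}(\C e(u))=0$ in each half-ball with traction-free Neumann condition $(\C e(u))\nu_0=0$ on $P_0$. Classical boundary regularity for this elliptic system on a half-ball---either by even/odd reflection across $P_0$ reducing to an interior estimate, or by direct Schauder theory for systems---then yields $\|e(u)\|_{L^\infty(B(0,1/2))}\leq C$, where $C$ depends only on $N$, $\C$ and on $\int_{B(0,1)}|e(u)|^2\dd{x}$, the latter being uniformly bounded via Ahlfors-regularity. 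Consequently $\omega_u(0,\gamma)\leq C\gamma$ for all $\gamma\in(0,1/2)$, and we fix $\gamma$ depending only on $N$, $\C$ and $\varepsilon$ so that $\omega_u(0,\gamma)\leq\varepsilon/2$.

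To conclude, the convergence of the elastic energies on $B(0,\gamma)$ (together with the equivalence of $|\cdot|^2$ and $\CC{\cdot}$ on symmetric matrices) gives $\omega(0,\gamma)\to\omega_u(0,\gamma)\leq\varepsilon/2$ along the sequence, while the approximating hyperplanes $P_n$ associated to $\beta^{\rm bil}_{K_n}(0,1)$ satisfy $\beta_{K_n}(0,\gamma)\leq\gamma^{-1}\beta^{\rm bil}_{K_n}(0,1)\leq 1/(n\gamma)\to 0$. Hence $\beta_{K_n}(0,\gamma)+\omega(0,\gamma)\leq\varepsilon$ for $n$ large, contradicting our choice of $(u_n,K_n)$. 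The main obstacle is the regularity step: one must justify an $L^\infty$ bound on $e(u)$ on $B(0,1/2)$ for a weak Lam\'e solution on a half-ball with traction-free Neumann condition on the flat part of the boundary. This is classical for isotropic $\C$ via reflection across $P_0$, and for general $\C$ it follows from standard elliptic regularity for systems; in any case it is the only ingredient which is not already contained in the preceding sections.
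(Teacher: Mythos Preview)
Your argument is correct and follows the same contradiction-and-compactness scheme as the paper, invoking Theorem~\ref{thm_limit} and elliptic regularity for the Lam\'e system with traction-free boundary condition on a flat piece. The difference lies in the scale at which you take the limit. You keep the contradicting sequence in $B(0,1)$, pass to a topological minimizer $(u,P_0)$ there, and use a local $L^\infty$ bound on $e(u)$ to get $\omega_u(0,\gamma)\leq C\gamma$ with a universal $C$; the choice of $\gamma$ is then made a priori from this constant. The paper instead lets both $\varepsilon_0$ and $\gamma$ go to zero along a common sequence $r_i\to 0$, rescales by $r_i^{-1}$, and passes to a global minimizer $(v_\infty,L_\infty)$ in all of $\R^N$ with $L_\infty$ a hyperplane; the elliptic decay $\int_{B(0,1)}|e(v_\infty)|^2\leq C R^{-1}\int_{B(0,R)}|e(v_\infty)|^2$ combined with the Ahlfors growth bound $\int_{B(0,R)}|e(v_\infty)|^2\leq C R^{N-1}$ then forces $e(v_\infty)\equiv 0$ by a Liouville argument.

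Both routes rely on the same elliptic input (boundary regularity for the flat Neumann problem), just exploited at opposite ends of the scale: you use it locally to gain one power of $r$, the paper uses it globally to gain one power of $R^{-1}$. Your version is slightly more direct but calls on a boundary $L^\infty$ estimate; the paper's version is a bit slicker in that it only needs the decay exponent to beat $N-1$, and it avoids having to fix $\gamma$ before extracting the sequence. One small point: the convergence in Theorem~\ref{thm_limit} is stated for $\int\CC{e(\cdot)}$, not $\int|e(\cdot)|^2$, so when you deduce $\omega_{u_n}(0,\gamma)\to\omega_u(0,\gamma)$ you are implicitly using norm equivalence on symmetric matrices; this only introduces a harmless constant that can be absorbed into the choice of $\gamma$, but it would be cleaner to say so explicitly.
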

\begin{proof}
    We let $C\geq 1$ denote a generic constant  which depends only on $N$ and $\C$.
    As usual, we assume that $B(x_0,r_0) = B(0,1)$.
    We consider a fixed $\varepsilon > 0$ and we assume that for all choice of constant $\varepsilon_0 > 0$, the statement does not hold.
    Therefore, we can find a sequence $(r_i)_i \in (0,1)$ going to $0$ and sequence of topological almost-minimizers $(u_i,K_i)_i$ with gauge $h_i$ in $B(0,1)$ such that for all $i$,
    \begin{equation*}
        \lim_{i \to +\infty} \beta^{\rm bil}_{K_i}(0,1) + h_i(1) \leq r_i^2
    \end{equation*}
    but
    \begin{equation*}
        \beta(0,r_i) + r_i^{1-N} \int_{B(0,r_i)} \abs{e(u_i)}^2 \dd{x} \geq \varepsilon. 
    \end{equation*}
    Note that by the scaling property of the flatness and the fact that $\beta_{K_i}(0,1) \leq \beta^{\rm bil}_{K_i}(0,1) \leq r_i^2$, we have
    \begin{equation*}
        \beta_{K_i}(0,r_i) \leq r_i^{-1} \beta_{K_i}(0,1) \leq r_i.
    \end{equation*}
    We extract a subsequence such that for all $i$, $\beta(0,r_i) \leq \varepsilon/2$ and thus we have to contradict the fact that for all $i$, $\omega(0,r_i) \geq \varepsilon/2$.
    We also extract a subsequence such that for all $i$, $h_i(1) \leq \varepsilonA$ and thus by (\ref{eq_AF2}), for all ball $B(x,r) \subset B(0,1)$,
    \begin{equation}\label{eq_limit_AF}
        \sup_i \left(\int_{B(x,r)} \abs{e(u_i)}^2 \dd{x} + \HH^{N-1}(K_i \cap B(x,r))\right) \leq C r^{N-1}.
    \end{equation}
    We define a pair $(v_i,L_i)$ in $B(0,r_i^{-1})$ by
    \begin{equation*}
        v_i(x) := r_i^{-1/2} u_i(r_i x) \quad \text{and} \quad L_i := r_i^{-1} K_i.
    \end{equation*}
    We observe that $(v_i,L_i)$ is a topological almost-minimizer in $B(0,r_i^{-1})$ with gauge $\tilde{h}_i(t) = h_i(r_i t)$.
    We also observe that for all $R > 0$ and for $i$ big enough such that $B(0,R) \subset B(0,r_i^{-1})$, we have by (\ref{eq_limit_AF}),
    \begin{equation*}
        \int_{B(0,R)} \abs{e(v_i)}^2 \dd{x} = r_i^{1-N} \int_{B(0,r_i R)} \abs{e(u_i)}^2 \dd{x} \leq C R^{N-1}
    \end{equation*}
    which is bounded. It follows that we can extract a subsequence of $(v_i,L_i)_i$ which converges to a pair $(v_{\infty},L_{\infty})$ in $\R^N$. As $h_i(1) \to 0$, we have $\tilde{h}_i(t) \to 0$ for all $t \geq 0$ and thus the limit gauge $h$ is identically zero.
    By application of Theorem \ref{thm_limit}, the pair $(v_{\infty},L_{\infty})$ is a global minimizer in $\R^N$ and we have for all $R > 0$,
    \begin{equation}\label{eq_limit_energy}
        \int_{B(0,R)} \CC{e(v_{\infty})} \dd{x} = \lim_{i \to +\infty} r_i^{1-N} \int_{B(0,r_i R)} \CC{e(u_i)} \dd{x}.
    \end{equation}
    We also observe that for all $R > 0$ and for all $i$ big enough such that $B(0,R) \subset B(0,r_i^{-1})$,
    \begin{equation*}
        \beta^{\rm bil}_{L_i}(0,R) = \beta^{\rm bil}_{K_i}(0,r_i R) \leq (r_i R)^{-1} \beta^{\rm bil}_{K_i}(0,1) \leq r_i R^{-1}
    \end{equation*}
    whence $\beta^{\rm bil}_{L_\infty}(0,R) = 0$. This means that $L_{\infty}$ coincides with an hyperplane in $B(0,R)$ and as $R$ is arbitrarily large, we deduce that $L_{\infty}$ coincides with an hyperplane in $\R^N$.
    By testing the minimality condition of $(v_{\infty},L_{\infty})$ with outer variations of the form $(v_{\infty} + \varepsilon \varphi,L_{\infty})$, where $\varphi \in C^1_c(\R^N;\R^N)$, we see that $v_{\infty}$ solves in a weak sense the elliptic PDE $\mathrm{div}(\C e(v_{\infty})) = 0$ in the complement of $L_{\infty}$ with a Neumann boundary condition $\C e(v_{\infty}) \cdot e_N = 0$ on each side of $L_{\infty}$. By elliptic regularity, it follows that there exists a constant $C \geq 1$ such that for all $R > 1$,
    \begin{equation*}
        \int_{B(0,1)} \abs{e(v_{\infty})}^2 \dd{x} \leq \frac{C}{R^N} \int_{B(0,R)} \abs{e(v_{\infty})}^2 \dd{x}.
    \end{equation*}
    But by (\ref{eq_limit_energy}), we have
    \begin{align*}
        \int_{B(0,R)} \CC{e(v_{\infty})} \dd{x} = \lim_{i \to +\infty} r_i^{1-N} \int_{B(0,r_i R)} \CC{e(u_i)} \dd{x} \leq C R^{N-1}
    \end{align*}
    so
    \begin{equation*}
        \int_{B(0,1)} \abs{e(v_{\infty})}^2 \dd{x} \leq C R^{-1}
    \end{equation*}
    and since $R > 0$ is arbitrarily large, we arrive at $\int_{B(0,1)} \abs{e(v_{\infty})}^2 \dd{x} = 0$.
    Using (\ref{eq_limit_energy}) again, this gives
    \begin{align*}
        \lim_{i \to +\infty} r_i^{1-N} \int_{B(0,r_i)} \CC{e(u_i)} \dd{x} = 0
    \end{align*}
    and contradicts the assumption.
\end{proof}

\begin{corollary}
    Let $(u,K)$ be an almost-minimizer in $\Omega$ with gauge $h$.
    For all $x \in K$, $x$ is a regular point of $K$ if and only if
    \begin{equation}\label{eq_def_regular2}
        \liminf_{r \to 0} \beta(x,r) + \omega(x,r) = 0.
    \end{equation}
\end{corollary}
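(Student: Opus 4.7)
The strategy is to combine Propositions \ref{prop_regular0} and \ref{prop_regular1} and to use the equivalent characterization, noted just before Proposition \ref{prop_regular0}, that $x \in K$ is regular if and only if $\liminf_{r \to 0} \beta^{\rm bil}(x,r) = 0$. Since $(u,K)$ is an almost-minimizer, the definition of almost-minimizer forces $\lim_{r \to 0} h(r) = 0$, so the smallness assumptions $h(r) \leq \varepsilon_0$ appearing in the two propositions are satisfied automatically for $r$ small enough.

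For the implication ``$x$ regular $\Rightarrow$ (\ref{eq_def_regular2})'', I would fix $\varepsilon > 0$, let $\varepsilon_0(\varepsilon)$ and $\gamma(\varepsilon)$ be the constants given by Proposition \ref{prop_regular1}, and use $\liminf_r \beta^{\rm bil}(x,r) = 0$ together with $h(r) \to 0$ to find arbitrarily small radii $r > 0$ satisfying $\beta^{\rm bil}(x,r) + h(r) \leq \varepsilon_0$. Proposition \ref{prop_regular1} then yields $\beta(x,\gamma r) + \omega(x,\gamma r) \leq \varepsilon$ for the corresponding (arbitrarily small) radii $\gamma r$. Hence $\liminf_{r \to 0} (\beta(x,r) + \omega(x,r)) \leq \varepsilon$, and letting $\varepsilon \to 0$ gives (\ref{eq_def_regular2}).

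For the converse, I would assume (\ref{eq_def_regular2}) and pick a sequence $r_i \to 0$ along which $\beta(x,r_i) + \omega(x,r_i) \to 0$. For $i$ large enough that $h(r_i)$ lies below the universal $\varepsilon_0$ of Proposition \ref{prop_regular0}, that proposition gives
\begin{equation*}
    \beta^{\rm bil}(x,r_i/2) \leq C \bigl(\beta(x,r_i) + \omega_2(x,r_i)^{1/(6m)}\bigr).
\end{equation*}
Since $\omega_2 = \omega$, the right-hand side tends to $0$ as $i \to +\infty$, so $\liminf_{r \to 0} \beta^{\rm bil}(x,r) = 0$ and $x$ is a regular point.

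The argument is essentially bookkeeping: Propositions \ref{prop_regular0} and \ref{prop_regular1} are precisely the quantitative forms of the two directions of the equivalence, and the only mild point to keep track of is that the gauge smallness hypothesis in each proposition is harmless here because $h(r) \to 0$ is built into the definition of an almost-minimizer. No substantive obstacle is expected.
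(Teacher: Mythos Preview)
Your proposal is correct and follows essentially the same approach as the paper's own proof: both directions are obtained by applying Propositions \ref{prop_regular0} and \ref{prop_regular1} together with the gauge condition $\lim_{r\to 0} h(r)=0$. The paper's write-up differs only cosmetically (using a discrete sequence $\varepsilon=2^{-k}$ rather than a continuous $\varepsilon$, and stating the converse direction more tersely).
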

\begin{proof}
    Let $x \in K$ be a regular point, i.e., there exists a sequence $(r_i)_i$ going to $0$ such that $\lim_{r \to 0} \beta^{\rm bil}(x,r_i) = 0$. For all $k \geq 0$, Proposition (\ref{prop_regular1}) shows that there exist $\varepsilon_k > 0$ and $c_k \in (0,1)$ such that for all $r > 0$ with $B(x,r) \subset \Omega$, if $\beta^{\rm bil}(x,r) + h(r) \leq \varepsilon_k$, then $\beta(x,c_k r) + \omega(x,c_k r) \leq 2^{-k}$.
    But for all $k \geq 0$, we always have $\beta^{\rm bil}(x,r_i) + h(r_i) \leq \varepsilon_k$ for $i$ big enough and thus $\beta(x, c_k r_i) + \omega(c_k r_i) \leq 2^{-k}$ for $i$ big enough. We deduce that
    \begin{equation*}
        \liminf_{r \to 0} \beta(x,r) + \omega(x,r) \leq 2^{-k},
    \end{equation*}
    but since $k$ is arbitrarily large, $\liminf_{r \to 0} \beta(x,r) + \omega(x,r) = 0$.
    Reciprocally, it directly follows from Proposition \ref{prop_regular0} that the condition (\ref{eq_def_regular2}) implies $\liminf_{r \to 0} \beta^{\rm bil}(x,r) = 0$.
\end{proof}

\subsection{Dimension of the singular part}

In the scalar case, {\sc Ambrosio}, {\sc Fusco} and {\sc Hutchinson} \cite{AFH} established that if $\nabla u$ is integrable with an exponent $p > 2$, then $K$ is smooth out of a subset of dimension less or equal to $\max(N-2,N-p/2)$. {\sc De Lellis} and {\sc Focardi} \cite[Proposition 5]{DLF1} furthermore proved that a sharp $L^p$ estimate $\nabla u \in L^{4,\infty}$ was equivalent to a variant of the Mumford-Shah conjecture.
We are going to use Theorem \ref{thm_limit} to adapt \cite[Corollary 5.7]{AFH} to Griffith almost-minimizers but our proof is only a minor variation. The existence of an integrability exponent $p > 2$ has been established in \cite[Theorem 2.4]{FLK} for Griffith minimizers in the plane, following the method of {\sc De Philippis} and {\sc Figalli} \cite{DPF}. 

\begin{proposition}\label{prop_dimension}
    Let $(u,K)$ be an almost-minimizer in $\Omega$ with gauge $h$. If there exists $p > 2$ such that $e(u) \in L^p_{\rm loc}(\Omega;\R^{N \times N})$, then
    \begin{equation*}
        \mathrm{dim}_{\mathcal{H}}(\Sigma(K)) \leq \max(N-2,N-p/2).
    \end{equation*}
\end{proposition}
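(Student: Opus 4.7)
The plan is to decompose the singular set according to whether the normalized elastic energy persists at $x$. I set
\begin{equation*}
S_1 := \set{x \in K | \limsup_{r \to 0^+} \omega(x,r) > 0}, \qquad S_2 := \Sigma(K) \setminus S_1,
\end{equation*}
so that $\Sigma(K) \subset S_1 \cup S_2$, and it suffices to bound $\mathrm{dim}_{\HH}(S_1)$ and $\mathrm{dim}_{\HH}(S_2)$ separately, with the two terms in the max coming from $S_1$ and $S_2$ respectively.

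For $S_1$ I apply H\"older's inequality to get
\begin{equation*}
\omega(x,r) = r^{1-N} \int_{B(x,r)} \abs{e(u)}^2 \dd{x} \leq C \, r^{1 - 2N/p} \left( \int_{B(x,r)} \abs{e(u)}^p \dd{x} \right)^{2/p},
\end{equation*}
so every $x \in S_1$ forces $\limsup_{r \to 0^+} r^{p/2 - N} \int_{B(x,r)} \abs{e(u)}^p \dd{x} > 0$. The classical density theorem for Radon measures (see e.g.\ \cite[Theorem 6.9]{Mattila}) applied to the locally finite Borel measure $\abs{e(u)}^p \dd{x}$ then yields $\HH^{N - p/2}(S_1 \cap H) < +\infty$ for every compact $H \subset \Omega$, whence $\mathrm{dim}_{\HH}(S_1) \leq N - p/2$.

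For $S_2$ I use the blow-up analysis of Section \ref{section_blowup}: any $x \in S_2$ satisfies $\lim_{r \to 0^+} \omega(x,r) = 0$, so the last part of Proposition \ref{prop_blowup} produces, along every blow-up sequence $r_i \to 0$, a limit $(u_\infty, K_\infty)$ in which $u_\infty$ is piecewise rigid and $K_\infty$ is a minimal set in $\R^N$. Since $x \in \Sigma(K)$, one has $\liminf_{r \to 0^+} \beta^{\rm bil}(x,r) > 0$, and passing to the Hausdorff limit in the scaling identity $\beta^{\rm bil}_{K_i}(0,R) = \beta^{\rm bil}_K(x, r_i R)$ shows that $\beta^{\rm bil}_{K_\infty}(0,R) > 0$ for every $R$; in particular $K_\infty$ is never a hyperplane.

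It remains to invoke Federer's dimension reduction to conclude $\mathrm{dim}_{\HH}(S_2) \leq N-2$. Monotonicity of density for minimal sets (already quoted in the proof of Proposition \ref{prop_omega}) guarantees that iterated blow-ups at points of $S_2$ converge to minimal \emph{cones}, none of which is a hyperplane, and the classical induction on the ambient dimension transfers verbatim from the scalar argument of \cite[Corollary 5.7]{AFH} since its only ingredients are compactness of blow-ups (our Theorem \ref{thm_limit}), density monotonicity for minimal sets, and the stratification of minimal cones; this transfer is the main technical point, but no step requires scalar-specific features such as the coarea formula. Combining the two bounds yields $\mathrm{dim}_{\HH}(\Sigma(K)) \leq \max(N - p/2, N - 2)$, as claimed.
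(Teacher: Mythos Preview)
Your decomposition $\Sigma(K) \subset S_1 \cup S_2$ and your treatment of $S_1$ are exactly the paper's Step~1, and the strategy for $S_2$ (blow up where $\omega \to 0$ to get a minimal set, then use that singular sets of minimal sets have dimension $\leq N-2$) is also the paper's Step~2. So the overall approach is the same.

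There is, however, a genuine gap in your handling of $S_2$. What you actually prove is only that for each individual $x \in S_2$ the blow-up set $K_\infty$ is a minimal set with $0 \in \Sigma(K_\infty)$. This by itself does \emph{not} bound $\dim_{\HH}(S_2)$: knowing that every point of $S_2$ blows up to a minimal set with one singular point gives no information about how large $S_2$ is. The missing step is the persistence of $\HH^s_\infty$-content under blow-up. Concretely, one must first replace $S_2$ by a uniform set $\Sigma' = \{x : \beta_K(x,r) > \varepsilon_0 \text{ for all } r \in (0,R_0),\ \omega(x,r) \to 0\}$, pick $x_0 \in \Sigma'$ with $\limsup_{r} r^{-s} \HH^s_\infty(\Sigma' \cap B(x_0,r)) > 0$, and then show that the rescaled sets $\Sigma'_i := r_i^{-1}(\Sigma' - x_0)$ accumulate on $\Sigma(K_\infty)$ in the sense that $\Sigma'_i \cap \overline{B}(0,1) \subset V$ for every neighborhood $V$ of $\Sigma(K_\infty) \cap \overline{B}(0,1)$. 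Only then does the bound $\dim_{\HH}(\Sigma(K_\infty)) \leq N-2$ force $\HH^s_\infty(\Sigma(K_\infty) \cap \overline{B}(0,1)) > 0$ into a contradiction. This accumulation property is the real content of Step~2 and is not ``Federer's dimension reduction'' in the usual sense (that refers to iterated blow-ups on a single stationary object, reducing dimension by splitting off $\R$-factors). Your citation of \cite[Corollary 5.7]{AFH} is the right pointer, but the transfer you allude to \emph{is} precisely this density-persistence argument, which the paper carries out in detail rather than deferring.

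A minor point: your claim that $\beta^{\rm bil}_{K_\infty}(0,R) > 0$ follows from $\beta^{\rm bil}_{K_i}(0,R) \geq c$ is not quite right as stated, since $\beta^{\rm bil}$ is not lower semicontinuous under Hausdorff convergence in general; the correct (and sufficient) statement is that $K_\infty$ cannot be a hyperplane, which does follow because $\beta^{\rm bil}_{K_i}(0,R) \to 0$ whenever $K_\infty$ is a plane.
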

\begin{proof}
    \emph{Step 1. We show that $$\text{the set $\set{x \in K | \limsup_{r \to 0} \omega(x,r) > 0}$ has a dimension $\leq N - p/2$},$$ where in the case $N - p/2 < 0$, this means that the set is empty.}
    We start with a general fact about locally integrable function which is that if $v \in L^p_{\rm loc}(\Omega)$ for some $p \geq 1$, then for all real number $s < N$, we have for $\HH^{N - p(N-s)}$-a.e. $x \in \Omega$,
    \begin{equation*}
        \lim_{r \to 0} r^{-s} \int_{B(x,r)} v \dd{y} = 0,
    \end{equation*}
    where in the case $N - p(N-s) < 0$, this means that the limit holds everywhere.
    Applying this in particular to $v = \abs{e(u)}^2 \in L^{p/2}_{\rm loc}(\Omega)$ and $s = N-1$, we see that for $\HH^{N-p/2}$-a.e. $x \in \Omega$, we have
    \begin{equation*}
        \lim_{r \to 0} r^{1-N} \int_{B(x,r)} \abs{e(u)}^2 \dd{x} = 0.
    \end{equation*}
    This proves step 1.

    \vspace{0.5cm}\noindent
    \emph{Step 2. We show that $$\text{the set $\set{x \in \Sigma(K) | \lim_{r \to 0} \omega(x,r) = 0}$ has a dimension $\leq N-2$.}$$}
    Since $$\Sigma(K) = \set{x \in K | \liminf_{r \to 0} \beta_K(x,r) + \omega(x,r) > 0},$$ we see that
    \begin{equation*}
        \set{x \in \Sigma(K) | \lim_{r \to 0} \omega(x,r) = 0} = \set{x \in K | \liminf_{r \to 0} \beta_K(x,r) > 0, \ \lim_{r \to 0} \omega(x,r) = 0}
    \end{equation*}
    and it can be decomposed as a countable union of sets of the form
    \begin{equation*}
        \Sigma' = \set{x \in K | B(0,R_0) \subset \Omega,\ \forall r \in (0,R_0),\ \beta_K(x,r) > \varepsilon_0 \ \text{and} \ \lim_{r \to 0} \omega(x,r) = 0},
    \end{equation*}
    where $R_0 > 0$ and $\varepsilon_0 > 0$. So let us show that such a set $\Sigma'$ has a dimension $\leq N - 2$.
    We let $s \in (N-2,N-1)$ and we proceed by contradiction by assuming that $\HH^s(\Sigma') > 0$.
    By \cite[Lemma 4.6]{Mattila}, we have $\HH^s_{\infty}(\Sigma') = 0$ and by \cite[Theorem 3.6 (2)]{Simon}, we have
    \begin{equation}\label{eq_sigma_density}
        \limsup_{r \to 0} r^{-s} \HH^s_{\infty}(\Sigma' \cap B(x,r)) > C^{-1} \quad \text{for $\HH^s$-a.e. $x \in \Sigma'$,}
    \end{equation}
    for some constant $C \geq 1$ which depends only on $N$.

    Let us now fix a point $x_0 \in \Sigma'$ such that (\ref{eq_sigma_density}) holds and let $(r_i)_i \to 0$ be a sequence such $(r_i)_i \to 0$ and that for all $i$,
    \begin{equation*}
        r_i^{-s} \HH^s_{\infty}(\Sigma' \cap B(x_0,r_i)) \geq C^{-1}.
    \end{equation*}
    We consider the blow-up sequence $(u_i,K_i)_i$ given by
    \begin{equation*}
        u_i(x) = r_i^{-1/2} u_i(x_0 + r_i x) \quad \text{and} \quad K_i = r_i^{-1} (K - x_0).
    \end{equation*}
    Since $\lim_{r \to 0} h(r) = 0$, we can extract a subsequence (not relabelled) such that $(u_i,K_i)_i$ converges to a global minimizer $(u_{\infty},K_{\infty})$ in $\R^N$, see Section \ref{section_blowup}. Moreover,
    \begin{equation*}
        \lim_{r \to 0} r^{1-N} \int_{B(x,r)} \abs{e(u)}^2 \dd{x} = 0
    \end{equation*}
    so $K_{\infty}$ is a minimal set in $\R^N$.
    Now, we introduce $\Sigma(K_{\infty})$, the singular part of $K_{\infty}$, i.e., the set of points $x \in K_{\infty}$ such that $\liminf_{r \to 0} \beta_{K_{\infty}}(x,r) > 0$.
    By Allard epsilon-regularity theorem, there exists a universal $\varepsilon_1$ such that for all $x \in K_{\infty}$ and $r > 0$, if $\beta_{K_{\infty}}(x,r) \leq \varepsilon_1$, then $K_{\infty}$ is a $C^1$ surface in the neighborhood of $x$.
    This shows that at all points $x \in K_{\infty} \setminus \Sigma(K_{\infty})$, the set $K$ is $C^1$ in a neighborhood of $x$ and thus at such a point, $\lim_{r \to 0} \beta_{K_{\infty}}(x,r) = 0$.
    We also note that according to the regularity theory of minimal sets \cite[Theorem 4.3]{AFH}, we have
    \begin{equation*}
        \mathrm{dim}(\Sigma(K_{\infty})) \leq N - 2
    \end{equation*}
    and thus, since $s > N-2$,
    \begin{equation}\label{eq_minimal_dimension}
        \HH^s(\Sigma(K_{\infty}) = 0.
    \end{equation}
    Next, for all $i$, we set $\Sigma'_i := r_i^{-1} \left(\Sigma' - x_0\right) \subset K_i$. As the flatness is invariant under rescaling, let us note that from the definition of $\Sigma'$ we have
    \begin{equation}\label{eq_sigmai_flatness}
        \text{for all $x \in \Sigma'_i$, for all $r \in (0,r_i^{-1} R_0)$, we have $\beta_{K_i}(x,r) \geq \varepsilon_0$.}
    \end{equation}
    We then check that $\Sigma'_i$ converges to $\Sigma(K)$ in the sense that for all open set $V \subset \R^N$ containing $\Sigma(K_{\infty}) \cap \overline{B}(0,1)$, we have
    \begin{equation}\label{eq_sigma_limit}
        \Sigma'_i \cap \overline{B}(0,1) \subset V \quad \text{for $i$ big enough}.
    \end{equation}
    If (\ref{eq_sigma_limit}) does not hold true, we can find a sequence of points $x_i \in \Sigma'_i \cap \overline{B}(0,1)$ such that for all $i$, $x_i \notin V$. By extracting a subsequence again, we can assume that $(x_i)_i$ converges to some point $x \in \overline{B}(0,1) \setminus V$, which also necessarily belongs to $K_{\infty}$ by convergence of $(K_i)_i$ to $K_{\infty}$.
    Since $x \in K_\infty \cap \overline{B}(0,1) \setminus V \subset K_{\infty} \setminus \Sigma(K_{\infty})$ is a regular point of $K_{\infty}$, there exists $\rho > 0$ such that $\beta_{K_{\infty}}(x,2\rho) < \varepsilon_0/8$.
    By convergence of $(K_i)_i$ to $K$ and $(x_i)_i$ to $x$, one can deduce that for $i$ big enough,
    \begin{equation*}
        \beta_{K_i}(x_i,\rho) \leq \varepsilon_0/2,
    \end{equation*}
    which contradicts (\ref{eq_sigmai_flatness}). This proves (\ref{eq_sigma_limit}).

    Using the fact that
    \begin{equation*}
        C^{-1} \leq r_i^{-s} \HH^s_{\infty}(\Sigma' \cap B(x_0,r_i)) = \HH^s_{\infty}(\Sigma'_i \cap B(0,1))
    \end{equation*}
    and (\ref{eq_sigma_limit}), we see that for all open set $V$ containing $\Sigma(K_\infty) \cap \overline{B}(0,1)$, we have $\HH^s_{\infty}(V) \geq C^{-1}$.
    From the definition of $\HH^s_{\infty}$, one can deduce that $$\HH^s_{\infty}(\Sigma(K_{\infty}) \cap \overline{B}(0,1)) \geq C^{-1}.$$ We finally arrive at
    \begin{equation*}
        \HH^s(\Sigma(K_{\infty}) \cap \overline{B}(0,1)) \geq \HH^s_{\infty}(\Sigma(K_{\infty}) \cap \overline{B}(0,1)) \geq C^{-1},
    \end{equation*}
    which contradicts (\ref{eq_minimal_dimension}).
    We conclude that for all $s \in (N-2,N-1)$, we have $\HH^s(\Sigma') = 0$ and thus $\mathrm{dim}(\Sigma') \leq N-2$.
\end{proof}

\begin{appendices}

    \section{Auxiliary lemmas about affine maps}

    This section is devoted to justifying a few elementary properties of affine maps.
    Our first result controls the $L^{\infty}$ norm of an affine map on $B(0,R)$ by its average value on a subset $E \subset B(0,R)$. Similar and more general estimates of this kind are also proved in \cite[Lemma 3.4]{FS2}, \cite{Friedrich2}, \cite{FS}.

    \begin{lemma}\label{lem_int_rigid}
        For all real number $p \geq 1$, for all constant $c \in \R$ and vector $v \in \R^N$, for all radius $R > 0$ and for all Borel set $E \subset B(0,R) \subset \R^N$, we have
        \begin{equation*}
            \fint_E \abs{c + v \cdot x}^p \dd{x} \geq C^{-1} \left(\abs{c}^p + R^p \abs{v}^p\right) \left(\frac{\abs{E}}{R^N}\right)^p,
        \end{equation*}
        where $C \geq 1$ depends on $N$ and $p$.
    \end{lemma}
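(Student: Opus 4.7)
The plan is to rescale to the unit ball and then argue by cases according to which of $\abs{c}$ and $R\abs{v}$ dominates. First, the change of variables $y = x/R$ sends $E$ to $E' := E/R \subset B(0,1)$ with $\abs{E'} = \abs{E}/R^N$ and turns $c + v \cdot x$ into $c + w \cdot y$ where $w := Rv$. Both sides of the desired inequality scale the same way, so the problem reduces to showing
\[
    \int_{E'} \abs{c + w \cdot y}^p \dd{y} \geq C^{-1} \left(\abs{c}^p + \abs{w}^p\right) \abs{E'}^{p+1}
\]
for every Borel $E' \subset B(0,1)$, every $c \in \R$, and every $w \in \R^N$.

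In the regime $\abs{w} \leq \abs{c}/2$ the affine function is almost constant: for every $y \in B(0,1)$ one has $\abs{c + w\cdot y} \geq \abs{c} - \abs{w} \geq \abs{c}/2$, hence $\int_{E'} \abs{c + w \cdot y}^p \dd{y} \geq 2^{-p}\abs{c}^p \abs{E'}$. Since $\abs{w}^p \leq \abs{c}^p$ and $\abs{E'}^p$ is bounded by a constant depending only on $N$, this will dominate $C^{-1}(\abs{c}^p + \abs{w}^p)\abs{E'}^{p+1}$ for a suitable $C = C(N,p)$.

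In the complementary range $\abs{w} \geq \abs{c}/2$ the key idea is that the sublevel sets of $\abs{c + w \cdot y}$ are slabs orthogonal to $w$. For $t > 0$, the set $H_t := \set{y \in B(0,1) | \abs{c + w \cdot y} < t}$ lies in a slab of width $2t/\abs{w}$ intersected with the unit ball, so by one-dimensional slicing $\abs{H_t} \leq C_N t/\abs{w}$. Choosing $t := \abs{E'}\abs{w}/(2C_N)$ will force $\abs{H_t} \leq \abs{E'}/2$, hence $\abs{E' \setminus H_t} \geq \abs{E'}/2$ with $\abs{c + w\cdot y} \geq t$ on $E' \setminus H_t$. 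Integration then yields
\[
    \int_{E'} \abs{c + w\cdot y}^p \dd{y} \geq t^p \cdot \abs{E'}/2 = C^{-1} \abs{w}^p \abs{E'}^{p+1},
\]
and since $\abs{c}^p \leq (2\abs{w})^p$ in this regime, the full lower bound follows.

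There is no real obstacle here; the only nontrivial ingredient is the slab estimate $\abs{H_t} \leq C_N t/\abs{w}$, which is an immediate Fubini computation along the direction $w$. Combining the two cases with the initial scaling reduction will conclude the proof.
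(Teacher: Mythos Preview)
Your proof is correct and follows essentially the same approach as the paper: reduce to $R=1$, split into two cases according to whether $\abs{c}$ or $\abs{v}$ dominates, handle the first case by the trivial pointwise bound $\abs{c+v\cdot x}\geq \abs{c}/2$, and handle the second via the slab estimate $\abs{\{ \abs{c+v\cdot x}\leq t\}\cap B(0,1)}\leq C_N t/\abs{v}$ with a suitable choice of threshold $t$. The only cosmetic difference is that the paper first isolates the slab argument as an auxiliary inequality $\int_E \abs{c+v\cdot x}^p\,\dd{x}\geq C^{-1}\abs{v}^p\abs{E}^{p+1}$ before doing the case split, whereas you weave it directly into the second case.
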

    \begin{proof}
        In view of the homogeneity of the inequality, we can assume $R = 1$ without loss of generality.
        We start by proving a simpler inequality, namely, that there exists a constant $C \geq 1$ (depending on $N$ and $p$) such that
        \begin{equation}\label{eq_E0}
            \int_E \abs{c + v \cdot x}^p \dd{x} \geq C^{-1} \abs{v}^p \abs{E}^{p+1}.
        \end{equation}
        Without loss of generality, we can assume $v \ne 0$.
        For $\delta > 0$, the inequality $\abs{c + v \cdot x} \leq \delta$ defines a $\delta \abs{v}^{-1}$-neighborhood of some affine hyperplane so there exists a constant $C > 0$ (depending on $N$) such that for all $\delta > 0$
        \begin{equation*}
            \abs{B(0,1) \cap \set{\abs{c + v \cdot x} \leq \delta}} \leq C \abs{v}^{-1} \delta.
        \end{equation*}
        Therefore we can estimate for $\delta > 0$,
        \begin{align*}
            \abs{E} &\leq \abs{E \cap \set{\abs{c + v \cdot x} \leq \delta}} + \abs{E \cap \set{\abs{c + v \cdot x} \geq \delta}}\\
                    &\leq C \abs{v}^{-1} \delta + \abs{E \cap \set{\abs{c + v \cdot x} \geq \delta}}.
        \end{align*}
        We choose $\delta := (2C)^{-1} \abs{v} \abs{E}$ so that
        \begin{equation*}
            \abs{E \cap \set{\abs{c + v \cdot x} \geq \delta}} \geq \frac{1}{2} \abs{E}.
        \end{equation*}
        Then we have
        \begin{align*}
            \int_E \abs{c + v \cdot x}^p \dd{x}  &\geq \int_{E \cap \set{\abs{c + v \cdot x} \geq \delta}} \abs{c + v \cdot x}^p \dd{x}\\
                                                 &\geq 2^{-1} \delta^p \abs{E}\\
                                                 &\geq 2^{-p-1} C^{-p} \abs{v}^p \abs{E}^{p+1},
        \end{align*}
        which proves our claim.
        Now, we pass to the proof of the general inequality. If $c \geq 2 \abs{v}$, then for all $x \in E \subset B(0,1)$, we have $\abs{c + v \cdot x} \geq c/2$ so
        \begin{equation*}
            \int_E \abs{c + v \cdot x}^p \dd{x} \geq 2^{-p} c^p \abs{E} \geq 2^{-p} \left(\frac{c^p + \abs{v}^p}{1 + 2^{-p}}\right) \abs{E}.
        \end{equation*}
        Note that we can also bound from below $\abs{E} \geq \abs{B(0,1)}^{-p} \abs{E}^{p+1}$ since $E \subset B(0,1)$.
        If $c \leq 2 \abs{v}$, we use (\ref{eq_E0}), which gives
        \begin{equation*}
            \int_E \abs{c + v \cdot x}^p \dd{x} \geq C^{-1} \abs{v}^p \abs{E}^{p+1} \geq C^{-1} \left(\frac{\abs{c}^p + \abs{v}^p}{1 + 2^p}\right) \abs{E}^{p+1}.
        \end{equation*}
    \end{proof}

    We shall need an analogue inequality on the unit sphere.
    \begin{lemma}\label{lem_int_rigid2}
        For all real number $p \geq 1$, for all Borel set $E \subset \partial B(0,1)$, for all vector $b \in \R^N$ and matrix $A \in \R^{N \times N}$, we have
        \begin{equation*}
            \fint_{E} \abs{b \cdot x}^p + \abs{A x}^p \dd{\HH^{N-1}(x)} \geq C^{-1} \left(\abs{b}^p + |A|^p\right) \HH^{N-1}(E)^p,
        \end{equation*}
        for some constant $C \geq 1$ which depends on $N$ and $p$.
    \end{lemma}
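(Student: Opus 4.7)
The idea is to reduce the statement to a scalar, sphere-version of Lemma~\ref{lem_int_rigid} for linear functionals $x \mapsto v\cdot x$, and then bound $|Ax|^p$ from below by $|A_i \cdot x|^p$ for the largest row $A_i$ of $A$.

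\textbf{Step 1 (scalar sphere estimate).} First I would establish that there exists $C = C(N,p)\ge 1$ such that, for every $v \in \R^N \setminus\{0\}$ and every Borel set $E \subset \partial B(0,1)$,
\begin{equation*}
\int_E |v\cdot x|^p \, \dd{\HH^{N-1}(x)} \;\ge\; C^{-1}\, |v|^p\, \frac{\HH^{N-1}(E)^{p+1}}{\HH^{N-1}(\partial B(0,1))^{p}}.
\end{equation*}
The argument mimics Lemma~\ref{lem_int_rigid}: for $\delta>0$, the belt $\{x\in\partial B(0,1) : |v\cdot x| \le \delta\}$ is (up to rotation) the region $\{|x\cdot e_1|\le \delta/|v|\}$ on the sphere, whose $\HH^{N-1}$-measure is bounded by $C(N)\min(\delta/|v|,1)$. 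Splitting $E$ into $\{|v\cdot x|\le \delta\}\cup\{|v\cdot x|>\delta\}$ with the choice $\delta := (2C)^{-1}|v|\, \HH^{N-1}(E)/\HH^{N-1}(\partial B(0,1))$ (which forces $\delta/|v|\le 1$) ensures $\HH^{N-1}(E\cap\{|v\cdot x|>\delta\})\ge \tfrac12 \HH^{N-1}(E)$, and then
\begin{equation*}
\int_E |v\cdot x|^p \, \dd{\HH^{N-1}} \;\ge\; \tfrac12\, \delta^p\, \HH^{N-1}(E).
\end{equation*}
Substituting the value of $\delta$ gives the claim.

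\textbf{Step 2 (passing from $b$ and $A$ to Step 1).} Applying Step~1 with $v=b$ yields the desired lower bound for the $b$-term. For the matrix term, let $A_1,\dots,A_N\in\R^N$ denote the rows of $A$. Then $|Ax|^2 = \sum_i (A_i\cdot x)^2$, so for any $x$,
\begin{equation*}
|Ax|^p \;\ge\; \max_{i=1,\dots,N} |A_i\cdot x|^p.
\end{equation*}
Hence $\int_E |Ax|^p \,\dd{\HH^{N-1}} \ge \max_i \int_E |A_i\cdot x|^p \,\dd{\HH^{N-1}}$, and applying Step~1 with $v=A_i$ for the maximizing index gives
\begin{equation*}
\int_E |Ax|^p\,\dd{\HH^{N-1}} \;\ge\; C^{-1}\,\bigl(\max_i |A_i|^p\bigr)\,\frac{\HH^{N-1}(E)^{p+1}}{\HH^{N-1}(\partial B(0,1))^{p}}.
\end{equation*}
Since $|A|^2 = \sum_i |A_i|^2 \le N\max_i |A_i|^2$, we may replace $\max_i |A_i|^p$ by $N^{-p/2}|A|^p$ at the cost of a dimensional constant.

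\textbf{Step 3 (combine).} Adding the two contributions and dividing by $\HH^{N-1}(E)$ produces
\begin{equation*}
\fint_E \bigl(|b\cdot x|^p + |Ax|^p\bigr)\,\dd{\HH^{N-1}(x)} \;\ge\; C^{-1}(|b|^p + |A|^p)\,\HH^{N-1}(E)^{p},
\end{equation*}
after absorbing the factor $\HH^{N-1}(\partial B(0,1))^{-p}$ into $C$. No step is really delicate; the only place where one must be a little careful is ensuring in Step~1 that the choice of $\delta$ keeps $\delta/|v|\le 1$, so that the crude sphere-belt estimate applies, which is automatic from the normalization $E\subset\partial B(0,1)$.
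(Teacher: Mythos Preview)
Your proof is correct and follows essentially the same approach as the paper: both first establish the sphere analogue of Lemma~\ref{lem_int_rigid} for linear functionals $x\mapsto v\cdot x$ via the belt estimate, and then reduce the matrix term $|Ax|^p$ to the scalar case by bounding it below by $|A_i\cdot x|^p$ for the row $A_i$ of largest norm. Your treatment of the norm comparison $\max_i |A_i|^p \ge N^{-p/2}|A|^p$ is in fact slightly more careful than the paper's, which somewhat loosely identifies $|A|$ with $\max_i |A_i|$.
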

    \begin{proof}
        In the case $A = 0$, the proof is exactly like Lemma \ref{lem_int_rigid} with $B(0,1)$ replaced by $\partial B(0,1)$.
        We then pass to the case $b = 0$.
        For $i=1,\ldots,N$, we let $A_i$ denote the $i$-th column of $A^T$. From the definition of the Frobenius norm, we see that $\abs{A} \leq C \max_i \abs{A_i}$.
        We can thus fix an index $k$ such that $\abs{A_k} \geq C^{-1} \abs{A}$.
        We observe that for $x \in \R^N$
        \begin{equation*}
            \abs{Ax} = \sqrt{\sum_i (A_i \cdot x)^2} \geq \abs{A_k \cdot x} = \abs{A_k} \ \abs{b \cdot x},
        \end{equation*}
        where $b \in \R^N$ is a unit vector, so an application of the first step concludes that
        \begin{equation*}
            \fint_E \abs{A x}^p \dd{\HH^{N-1}} \geq C^{-1} |A|^p \HH^{N-1}(E)^p.
        \end{equation*}
    \end{proof}

\end{appendices}

\section*{Acknowledgements}

Camille Labourie was funded by the French National Research Agency (ANR) under grant ANR-21-CE40-0013-01 (project GeMfaceT).

\bibliographystyle{plain}
\bibliography{biblio_griffith}

\end{document}